\newtheorem{theorem}{Theorem}[section]
\newtheorem{proposition}[theorem]{Proposition}
\newtheorem{corollary}[theorem]{Corollary}
\newtheorem{definition}[theorem]{Definition}
\newtheorem{example}[theorem]{Example}
\DeclareMathOperator{\Hom}{Hom}
\DeclareMathOperator{\End}{End}
\begin{document}

\title{Generalised homomorphisms, measuring coalgebras and extended symmetries}

\author{Marj Batchelor\thanks{Cambridge University,, Department of Pure Mathematics and Mathematical Statistics \href{mailto:mb139@cam.ac.uk}{mb139@cam.ac.uk}} \and Will Boulton \and Daren Chen\thanks{currently studying at Stanford University} \and Jonathan Rawlinson\thanks{Bristol University} \and Mustafa Warsi}

\maketitle

\begin{abstract}Three categories of algebras with morphisms generalising the usual set of algebra homomorphisms are described. The Sweedler product provides a hom-tensor equivalence relating these three categories, and a tool enabling the universal measuring coalgebra to be calculated in small cases. A parallel theory for modules is presented.  

\end{abstract}

In comparing algebras $A$ and $B$, while homomorphisms remain the standard morphisms between algebras, there are other linear maps - derivations, higher order derivations, twisted derivations and other non (co) commutative variants useful in non-commutative geometry - which record more subtle information than can be monitored through algebra homomorphisms. The theory of measuring coalgebras developed by Sweedler, in particular the universal measuring coalgebra $P(A,B)$, the terminal object in a category of measuring coalgebras, provides one framework for these more subtle comparisons. The principle drawback to working with the universal measuring coalgebra has been the absence of methods to describe and compute $P(A,B)$ even in simple cases. 

The search for a method by which the universal measuring coalgebra may be calculated has led to the  study of an algebra, the Sweedler product,  $A\triangleleft K$ which is a tensor between an algebra $A$ and a coalgebra $K$, creating a functorial action
\[
\triangleleft: Alg  \times Coalg \to Alg,
\]
where $Alg , \  Coalg$ are the (usual) categories of algebras and coalgebras respectively.  The result is simple: the universal measuring coalgebra $P(A,B)$ is the dual coalgebra  $(A\triangleleft B^\circ)^\circ$. 

This construction had also been developed by Joyal and Anel \cite{anel2013sweedler} for the differential graded setting, and many of their results are duplicated here - including the result stated above. This paper emphasises the computability of $A\triangleleft K$, and hence $P(A,B)$, explores examples, and further generalises the Sweedler product to categories of modules. We hope this paper will encourage those working in fields beyond categorical algebra to apply the concepts in their own fields. In that hope we include basic definitions, results and simple examples.

The paper is organised as follows. 

Section 1 introduces measuring coalgebras together with examples assuming no prior knowledge. It presents two bicategories of algebras, $\widecheck{Alg}$ and $\widehat{Alg}$ with morphisms in the first being categories of measuring coalgebras and in the second categories of extensions of algebras. These two categories offer related means of exploring the more subtle transformations of algebras. 

Section 2 reviews the basic facts about universal measuring coalgebras and the enriched category of algebras $\underline{Alg}$ whose hom-set $\underline{Alg}(A,B)$ is the universal measuring coalgebra $P(A,B)$. It includes the categorical framework for $A \triangleleft K$ and the principle results establishing natural correspondences 
\[
Alg(A\triangleleft K, B) \cong Alg(A, [K,B]) \cong Coalg(K, P(A,B)), 
\]
where $[K,B]$ denotes the convolution algebra of $K$ with $B$. 
The universal measuring algebra $F(A,B) = A\triangleleft B^\circ$ is introduced, and the properties which justify the name are summarised in Theorem \ref{thm:F(A,B)}.

Section 3 computes elementary examples of $F(A,B)$. Readers whose interest lies in potential applications may prefer to work through the examples before engaging with the categorical technicalities of section 2.

Section 4 follows the development of section 2 in the context of modules and comodules, arriving at a corresponding sequence of natural equivalences. While the interpretation and understanding of the corresponding objects in module categories is challenging, the beauty of the categorical approach is that the structure of the theory lies parallel with the more familiar theory in section 2. The module - comodule theory has one result without parallel in the algebra - coalgebra theory: the global category of modules enriched over the universal measuring comodule is fibred over a 2-category of modules with the category of measuring comodules as morphisms.

The final potential application in Section 4 relates to internal symmetry groups. Given an algebra $A$, (for example, the real algebra generated by $\sqrt {-1}$ and a conjugation operator $J$), the theory provides a universal algebra $F(A,A)$ and a canonical extension $A\to F(A,A)\otimes A$. Moreover, for any $A$ module $W$ there is a universal module $D(W,W)$ for $F(A,A)$ and an extension $W \to D(W,W)\otimes W$. If $W$ is a representation of a group $G$,  any $D(W,W)$ submodule $Z$ determines a group $G_Z$ of $\End(Z)$ generated by invertible elements in the image of $F(A,A)$. If $W$ is also a representation of a symmetry group $G$, any choice of $F(A,A)$ module $Z$ determines an associated symmetry group $G_Z$ and $G_Z \times G$ acts on $Z\otimes W$. 

About the authorship of this paper. This work was done over a period of several years with the help of summer research students, William Boulton, Jonathan Rawlinson, Daren Chen and Mustafa Warsi who were undergraduates at the time. All have moved on to work in other fields or other areas of mathematics.  William Boulton returned to the project to help with completion, and others have contributed advice and corrections. 

We would like to thank Queens' College Cambridge, Trinity College Cambridge and the Cambridge Summer Research in Mathematics programme for support for these students during their summer projects. In addition Marj Batchelor would like to thank Ed Miller for several very useful conversations in the early development of the ideas.  We are deeply indebted to Martin Hyland for his encouragement and guidance in categorical matters.

\section{Measuring coalgebras, notation and examples}

\subsection{Measuring coalgebras}
The goal of this section is to introduce measuring coalgebras and provide explicit examples. Here and throughout the paper vector spaces and algebras and their tensor products are over an arbitrary field $k$ unless otherwise specified.

A coalgebra $H$ is a vector space with a coassociative comultiplication.
\[
\triangle :H\rightarrow H\otimes H, \ \ \triangle h = \sum_{(h)} h_{(2)}\otimes h_{(1)}
\]
together with a counit $\epsilon: H\to k$ satisfying
\[
h = \sum_{(h)} h_{(2)}\epsilon(h_{(1)}) = \sum_{(h)} \epsilon(h_{(2)})h_{(1)}.
\]
where the sum $\sum_{(h)} h_{(2)}\otimes h_{(1)}$ is Sweedler notation identifying the equivalence class rather than any particular representative of the element in a tensor product. Coassociativity enables one to write powers of the comultiplication map
\[
\triangle^n:H \to  H\otimes ...\otimes H, \ \ \triangle^n (h) = \sum_{(h)} h_{(n+1)}\otimes ... \otimes h_{(1)}.
\]
\begin{definition}  Let $A$ and $B$ be algebras.  If $H$ is a coalgebra, a map
\[
\rho: H \rightarrow Hom_k(A,B)
\]
\emph{measures} if
\[
 <\rho(h),aa'> = \sum_{(h)} <\rho(h_{(2)}),a><\rho(h_{(1)}),a'>, \ \ \ <\rho(h),1_A> = \epsilon(h)1_B
\]
for $a, a'$ in $A$. 

Here, and throughout, < {\_}, {\_} > will denote evaluation. 
\end{definition}
The fundamental idea behind the paper springs from the following elementary observation.
\begin{proposition}
\label{prop:meas} Let $H$ be a coalgebra, and let $A, \ B$ be algebras as above.

\[
\rho: H \rightarrow Hom_k(A,B)
\]
is a measuring map if and only if the corresponding map
\[
\rho: A \rightarrow Hom_k(H,B)
\]
is an algebra homomorphism.
\end{proposition}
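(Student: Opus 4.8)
The plan is to recognise the statement as an instance of the tensor--hom adjunction, under which the two defining equations of a measuring map translate, axiom by axiom, into the multiplicativity and unitality of the adjoint map. First I would make the correspondence explicit. A linear map $\rho: H \to \Hom_k(A,B)$, a linear map $\tilde\rho: A \to \Hom_k(H,B)$, and a linear map $H\otimes A \to B$ are all canonically identified through the natural isomorphisms
\[
\Hom_k(H,\Hom_k(A,B)) \cong \Hom_k(H\otimes A, B) \cong \Hom_k(A,\Hom_k(H,B)),
\]
the identification being $\langle \rho(h), a\rangle = \langle \tilde\rho(a), h\rangle$ for all $h\in H$ and $a\in A$ (the paper writes both maps as $\rho$, related by evaluation). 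Since this is a bijection of linear maps, it suffices to show that $\rho$ measures precisely when $\tilde\rho$ is an algebra homomorphism; the two directions of the equivalence then hold simultaneously.

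Next I would fix the algebra structure on the target. The relevant structure on $\Hom_k(H,B)$ is the convolution algebra, with product and unit
\[
(f * g)(h) = \sum_{(h)} f(h_{(2)})\, g(h_{(1)}), \qquad u(h) = \epsilon(h)\, 1_B,
\]
the ordering of the Sweedler factors following the convention $\triangle h = \sum_{(h)} h_{(2)}\otimes h_{(1)}$ used above. With this fixed, the verification is a direct term-by-term comparison. Multiplicativity of $\tilde\rho$, namely $\tilde\rho(aa') = \tilde\rho(a) * \tilde\rho(a')$, unwinds through the correspondence to
\[
\langle \rho(h), aa'\rangle = \sum_{(h)} \langle \rho(h_{(2)}), a\rangle \langle \rho(h_{(1)}), a'\rangle,
\]
which is exactly the first measuring equation; and unitality $\tilde\rho(1_A) = u$ unwinds to $\langle \rho(h), 1_A\rangle = \epsilon(h) 1_B$, which is exactly the second. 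Quantifying over all $h, a, a'$, these pairs of conditions are literally the same equations, so $\rho$ measures if and only if $\tilde\rho$ is a unital algebra homomorphism.

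Because each measuring axiom matches one homomorphism axiom exactly, there is no genuine analytic or combinatorial obstacle; the content lies entirely in setting up the adjunction and the convolution product correctly. The one place demanding care is bookkeeping: the paper's reversed Sweedler convention forces the factors $h_{(2)}$ and $h_{(1)}$ to appear in the order shown, and one must choose the convolution product so that expanding $(\tilde\rho(a) * \tilde\rho(a'))(h)$ reproduces that same order before invoking the correspondence. Keeping this ordering consistent between the two measuring equations and the two homomorphism axioms is the only step where an order-level slip could occur, and I would guard against it by evaluating the convolution on a general $h$ directly from its definition rather than relying on a remembered formula.
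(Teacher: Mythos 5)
Your proposal is correct and follows essentially the same route as the paper, which simply introduces the convolution product on $\Hom_k(H,B)$ with the ordering $\alpha * \beta(h) = \sum_{(h)}\langle\alpha, h_{(2)}\rangle\langle\beta, h_{(1)}\rangle$ and declares the equivalence immediate; you have merely written out the term-by-term matching of the measuring axioms with multiplicativity and unitality that the paper leaves implicit. Your attention to the reversed Sweedler convention is exactly the bookkeeping point that makes the identification work.
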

The proof is immediate given the convolution product: for $\alpha, \ \beta$ in $Hom_k(H,B)$, the product $\alpha * \beta$ is given by
\[
\alpha * \beta (h) = \sum_{(h)}<\alpha, h_{(2)}>< \beta, h_{(1)}>.
\]
The space $Hom_k(H, B)$, equipped with this product, is called the \emph{convolution product of $H$ with $B$}, and is denoted by $[H, B]$. 

\subsection{Examples of measuring coalgebras}\label{ssec:exmeasco} 
\begin{example} Homomorphisms.
\end{example}
Let $H$ be the coalgebra $kg$, where $\triangle g = g\otimes g$ and $\epsilon (g) = 1$. Observe that a linear map $\varphi: H \to \Hom_k(A,B)$ measures if and only if $\varphi(g)$ is an algebra homomorphism.

\begin{example}: Derivations. 
\end{example}
Let $H$ be the coalgebra with basis $ \{ g, \gamma \} $ with $\triangle g, \ \epsilon (g)$ as above, and 
\[
\triangle \gamma = \gamma \otimes g + g \otimes \gamma, \ \ \ \epsilon(\gamma) = 0.
\]
A linear map $\varphi: H \to \Hom_k(A,B)$ measures if and only if $\varphi(g)$ is a homomorphism and $\varphi(\gamma)$ is a derivation with respect to $\varphi(g)$.

Measuring coalgebras for a given pair $A, \ B$ of algebras forms a category  $\widecheck{Alg}(A,B)$ where maps between measuring coalgebras defined as follows.

\begin{definition} \label{def:measmap}  Let $H_1$ and $H_2$ be coalgebras with measuring maps $\rho_1, \ \rho_2: H_1, H_2 \to \Hom_k(A,B)$ respectively.  A \emph{map of measuring coalgebras} $\tau: H_1 \to H_2$ is a coalgebra map such that the following diagram commutes:
\[
\xymatrixcolsep{5pc} 
\xymatrix{
H_1 \ar[d]^{\tau} \ar[r]^{\rho_1} & \Hom_k(A,B) \\
H_2 \ar[ur]_{\rho_2} &
}
\] 
\end{definition}
The identity $1_H :H \to H$ provides the unit in $\widecheck{Alg}(A,B)$.

Dual coalgebras provides a rich collection of less familiar examples.

\begin{example} Extensions and dual coalgebras as measuring coalgebras.
\end{example}
Let $S$ be a finite dimensional algebra, and suppose we have an algebra homomorphism
\[
\sigma: A \to S \otimes B.
\]
Since $S$ is finite dimensional, the full linear dual $S^*$ is a coalgebra. The map corresponding map
\[
\sigma : S^* \to \Hom(A,B)
\]
is a measuring map. 
Thus extensions $\sigma: A \to S \otimes B$ for finite dimensional algebras $S$ are a source of examples of measuring coalgebras.

The finiteness restriction on $S$ can be dropped by introducing the dual coalgebra. 

\begin{definition} \label{def:dualcoalg} . Let $S$ be an algebra. The \emph{dual coalgebra} $S^\circ$ of an algebra $S$ is the subspace of the full linear dual $S^*$ of elements which vanish on a cofinite ideal of $S$. 
\end{definition}
See Sweedler \cite{Swee1969} Chapter VI.

Another familiar example is that of $\mathbb{C}[z]^\circ$, which has a basis $\{ \frac{d^n}{dz^n}|_b\}$ of elements vanishing on the ideal generated by $(z - b)^{n+1}$ with comultiplication familiar from the generalised product rule:
\[
\triangle \frac{d^n}{dz^n}|_b = \sum_k{ n\choose k} \frac{d^{n-k}}{dz^{n-k}}|_b\otimes \frac{d^k}{dz^k}|_b.
\]

The notation $\frac{d^n}{dx^n}|_b$ is extended to the case $n=0$, denoting evaluation at $b$. 

Extensions  $\sigma: A \to S \otimes B$ form a category in their own right, which we will denote by $\widehat{Alg}(A,B)$ in a manner parallel with  $\widecheck{Alg}(A,B)$. Define morphisms of extensions as follows.
\begin{definition}
\label{def:extmap}
 Let $S_1, \ S_2$ be algebras with homomorphisms $\sigma_i: A \to S_i \otimes B$. A homomorphism $\theta: S_1 \to S_2$ is a map of extensions if the following diagram commutes.
 \[
\xymatrixcolsep{5pc} 
\xymatrix{
A \ar[r] ^{\sigma_{1}} \ar[dr]_{\sigma_2} & S_1 \otimes B \ar[d] ^{\theta \otimes 1_B} \\
& S_2 \otimes B
}
\]
 \end{definition}
 Again the identity map $1_S:S \to S$ provides the unit in $\widehat{Alg}(A,B)$
 
\subsection{The bicategories  $\widecheck{Alg}$ and $\widehat{Alg}$ \label{sec:checkhat} }

The purpose of the paper is to establish a framework for comparing algebras in ways less restrictive than homomorphisms. Measuring coalgebras and extension offer two means of generalising the concept of homomorphism, offering new categories, but the increased generality needs additional structure enabling the comparison of these maps: each can be used to define new categories of algebras which carry the additional structure of a bicategory. 

\begin{proposition}
\label{prop:bicat}
\begin{enumerate}
\item There is a bicategory $\widecheck{Alg}$ whose objects are algebras, and whose set of morphisms $\widecheck{Alg}(A,B)$ is the category of measuring coalgebras $\rho: H \to \Hom_k(A,B)$.
\item There is a bicategory $\widehat{Alg}$ whose objects are algebras, and whose set of morphisms $\widehat{Alg}(A,B)$ is the category of extensions $\rho:A \to S\otimes B$.
\end{enumerate}
\end{proposition}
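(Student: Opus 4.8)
The plan is to supply, for each of the two structures, the complete bicategorical data — a hom-category for every ordered pair of objects, a horizontal composition functor, an identity $1$-cell on each object, and associativity and unit constraints — and then to reduce the pentagon and triangle axioms to the coherence of the tensor product on vector spaces. Since these tensor products are associative and unital only up to canonical isomorphism, the output will be a genuinely weak bicategory rather than a strict $2$-category. I would establish $\widecheck{Alg}$ in detail and obtain $\widehat{Alg}$ by the parallel argument.

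For $\widecheck{Alg}$ the hom-categories are already available: $\widecheck{Alg}(A,B)$ is the category of measuring coalgebras with the maps of Definition \ref{def:measmap}. The first substantive step is horizontal composition. Given measuring maps $\rho_1: H \to \Hom_k(A,B)$ and $\rho_2: K \to \Hom_k(B,C)$, I take the tensor coalgebra $K \otimes H$ equipped with the map $k \otimes h \mapsto \rho_2(k)\circ\rho_1(h)$. The one computation that is not purely formal is checking that this composite again measures; I would do so by applying the Sweedler-notation measuring identity for $\rho_1$ and then for $\rho_2$, matching the result against the comultiplication of $K \otimes H$, with the counit condition on $1_A$ collapsing to $\epsilon_K\otimes\epsilon_H$. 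On $2$-cells I send a pair of maps of measuring coalgebras $\tau_2: K_1\to K_2$ and $\tau_1: H_1\to H_2$ to $\tau_2\otimes\tau_1$; compatibility with the composite measuring maps is immediate from $\rho_2'\circ\tau_2=\rho_2$ and $\rho_1'\circ\tau_1=\rho_1$, and functoriality in the $2$-cells follows from functoriality of $\otimes$ on coalgebras. The identity $1$-cell on $A$ is the grouplike coalgebra $kg$ with $g\mapsto\mathrm{id}_A$, which measures for the reason given in the homomorphism example of Section \ref{ssec:exmeasco}.

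For the constraints I would take the associator from the canonical vector-space isomorphism $(L\otimes K)\otimes H\cong L\otimes(K\otimes H)$ and the unitors from $k\otimes H\cong H\cong H\otimes k$. The key point is that each such canonical isomorphism is in fact an invertible $2$-cell, i.e.\ a coalgebra map intertwining the two composite measuring maps, and is natural in its arguments: this holds because the composite measuring map was defined by \emph{ordinary composition of linear maps}, which is strictly associative and unital, so $(\rho_3(l)\circ\rho_2(k))\circ\rho_1(h)$ and $\rho_3(l)\circ(\rho_2(k)\circ\rho_1(h))$ literally agree under the canonical iso, and similarly for the units. Consequently the pentagon and triangle identities reduce to the corresponding identities for the associator and unitors of the monoidal category of vector spaces, where they are standard; the coherence is inherited rather than reproved.

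The case of $\widehat{Alg}$ runs in strict parallel, with algebras replacing coalgebras and homomorphisms replacing measuring maps. Horizontal composition of $\sigma_1: A\to S\otimes B$ and $\sigma_2: B\to T\otimes C$ is $(\mathrm{id}_S\otimes\sigma_2)\circ\sigma_1: A\to (S\otimes T)\otimes C$, with extension algebra $S\otimes T$; being a composite of homomorphisms it is automatically a homomorphism, so well-definedness needs no measuring-style check. The identity $1$-cell on $A$ is $A\to k\otimes A\cong A$ with extension algebra the ground field, and the constraints descend from the canonical associativity and unit isomorphisms of the tensor product of algebras, which lift to maps of extensions for exactly the same reason as above. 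I expect the main obstacle to be organisational rather than deep: fixing order conventions so that composition of $1$-cells matches composition of the underlying linear maps, and confirming that the canonical structural isomorphisms genuinely lift to $2$-cells in the respective hom-categories. Once horizontal composition is recognised as $\otimes$ carrying the strictly associative composition of linear maps, every coherence condition is inherited, and the remaining work is bookkeeping.
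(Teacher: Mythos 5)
Your proposal is correct and follows essentially the same route as the paper: horizontal composition as the tensor coalgebra (resp.\ tensor algebra) carrying the composite measuring map (resp.\ composite homomorphism), the grouplike coalgebra and ground field as identity $1$-cells, and coherence inherited from the monoidal structure on vector spaces. Your treatment of the associator and unitors as invertible $2$-cells satisfying pentagon and triangle is in fact a more careful spelling-out of what the paper compresses into the remark that horizontal composition ``is exactly the tensor product $\otimes$ for vector spaces.''
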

\begin{proof}
The case for $\widecheck{Alg}$.

The task is to show that morphisms - measuring coalgebras - "compose", that a suitable unit morphism exists, and that the composition of morphisms commutes suitably with the "vertical" morphisms of measuring coalgebras.

\emph{"Horizontal" morphisms $\odot$ - composition of measuring coalgebras.}

Suppose $H, \ K$ are measuring coalgebras with measuring maps $\phi: H \to \Hom(A,B), \ \psi: K \to \Hom(B,C)$ respectively. Then $K \otimes H$ is a coalgebra and the composition 
\[
\psi \otimes \phi : K \otimes  H \to \Hom(B,C)\otimes \Hom(A,B) \to \Hom (A,C)
\]
measures:
\begin{align}
<h\otimes k, aa'> = & \sum_{(h)}<h, <k_{(2)},a><k_{(1)},a'>> \\
	&= \sum _{(h),(k)} <h_{(2)},<k_{(2)},a>><h_{(1)},<k_{(1)},a'>> 
\end{align}
as required, since $\triangle (h \otimes k) = \sum_{(h),(k)} (h_{(2)} \otimes k_{(2)})\otimes (h_{(1)} \otimes k_{(1)})$

Define the composition of $H$ and $K$ in $\widecheck{Alg}$ to be the coalgebra $H\otimes K$ with the measuring coalgebra structure given by $\psi \otimes \phi$. This "horizontal" composition will be denoted by $H\odot K$. Composition of "vertical" morphisms - maps between measuring coalgebras will be denoted by $\circ$. 

\emph{The unit morphism in $\widecheck{Alg}$.}
Let E be the one dimensional coalgebra with the distinguished element $e$ with $\epsilon (e) = 1$, and $\triangle e = e \otimes e$.  For an algebra $A$ define $\eta_A:E \to \Hom(A,A)$ by $\eta (e) = 1_A$ giving $E$ the structure of a measuring coalgebra. Direct calculation confirms that the pair $E, \eta_A$ has the properties required of a unit completing the requirements that  $\widecheck{Alg}$ is a category.

\emph{Compatibility of "vertical" and "horizontal" maps}  Let $H_i, \ i = 1,\ 2, \ 3$ be in 
 $\widecheck{Alg}(A,B)$ and $K_i$ in  $\widecheck{Alg}(B,C)$, and let $\varphi_i :H_i \to H_{i+1}$, ($\psi_i :K_i \to K_{i+1}$) for $i = 1,\ 2$  be maps of measuring coalgebras. The requirement is to show that there are measuring maps
 \[
 \varphi_i\circ \psi_i : H_i \odot K_i \to  H_{i+1} \odot K_{i+1}, \ \  i = 1, \ 2
 \]
 and that these maps  satisfy
 \[
 (\psi_1\circ \psi_2)\odot (\varphi_1\circ \varphi_2) \cong (\psi_2\odot \varphi_2)\circ (\psi_1\odot\varphi_1)
 \]
The notation $\cong$ denotes a natural isomorphism guaranteed by the uniqueness of the tensor product rather than an equality.  That this compatibility requirement is fulfilled is follows from the fact that the horizontal composition $\odot$ is exactly the tensor product $\otimes$ for vector spaces, and the statement is true for tensor products of vector spaces.

The case for  $\widehat{Alg}$.

Again, the task is to define "horizontal" composition $\odot$ of extensions, verify that $\widehat{Alg}$ forms a category with objects being algebras, and morphisms being the categories $\widehat{Alg}(A,B)$, and confirm appropriate compatibility of "horizontal" and "vertical" morphisms. 

\emph{"Horizontal morphisms - composition of extensions"}

If $(S, \sigma)$ and $(T, \tau)$ are extensions from algebras $A$ to $B$ and $B$ to $C$ respectively, the composition $(T,\tau)\circ(S,\sigma)$ is defined via

\[
\xymatrixcolsep{5pc} 
\xymatrix{
A \ar[r] ^\rho& S\otimes B \ar[r] ^{1 \otimes \tau} & S \otimes (T\otimes C) \cong (S\otimes T) \otimes C
}
\]
making use of the natural isomorphism inherited from vector spaces.

The pair $(k, \mu)$ where $\mu: k \to k\otimes A = A$ is the unit in $A$ provides an identity map in the set of extensions from $A$ to $A$, so that $\widehat{Alg}(-,-)$ has the properties required of morphisms. 

\emph{Compatibility of "vertical" and "horizontal" maps.} 
The compatibility condition is the same as required for  $\widecheck{Alg}$. The condition is satisfied for exactly the same reason: the "horizontal" morphisms again are simply the tensor products of vector spaces.

\end{proof}

Let $\widecheck{Alg}(A,B)_f$ denote the full subcategory of $\widecheck{Alg}(A,B)$ whose objects are all finite dimensional measuring coalgebras.  Let $\widecheck{Alg}_f$ be the sub-bicategory whose morphisms are the restricted subcategories $\widecheck{Alg}(A,B)_f$. The full linear dual $H^*$ of a coalgebra $H$ is (always) an algebra. The content of proposition \ref{prop:meas} is precisely that in the case where $H$ is finite dimensional, the dual of a measuring $H \to \Hom_k(A,B)$ is precisely an extension $A \to H^* \otimes B$. Thus, proposition \ref{prop:meas} can be recast in categorical terms as follows.

\begin{theorem}
\label{thm:dual}
For $A$, $B$ algebras, taking the dual establishes an equivalence of categories
\[
*: \widecheck{Alg}(A,B)_f ^{op} \cong \widehat{Alg}(A,B)_f:*
\]
This establishes an equivalence of bicategories:
\[
*: \widecheck{Alg}_f ^{co} \cong  \widehat{Alg}_f:*.
\]
\end{theorem}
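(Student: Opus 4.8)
The plan is to reduce everything to Proposition~\ref{prop:meas} together with the elementary fact that on finite-dimensional vector spaces $V \mapsto V^*$ is a contravariant involution (via the natural isomorphism $V \xrightarrow{\sim} V^{**}$) which exchanges the dual-algebra and dual-coalgebra constructions: the dual of a coalgebra is always an algebra, the dual of a finite-dimensional algebra is a coalgebra, and $S^\circ = S^*$ in the finite-dimensional case. First I would build the equivalence of hom-categories for fixed $A,B$, and then assemble these into an identity-on-objects pseudofunctor, checking that duality interchanges the two tensor-product structures governing horizontal composition.

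\emph{The equivalence for fixed $A,B$.} On objects, send a finite-dimensional measuring coalgebra $(H,\rho)$ to the extension $(H^*,\sigma)$, where $\sigma: A \to \Hom_k(H,B) = H^*\otimes B$ is the algebra homomorphism corresponding to $\rho$ under Proposition~\ref{prop:meas} (finite-dimensionality gives $\Hom_k(H,B) \cong H^*\otimes B$, and $H^*$ is an algebra). On morphisms, send a measuring-coalgebra map $\tau: H_1 \to H_2$ to its transpose $\tau^*: H_2^*\to H_1^*$, which is an algebra homomorphism because $\tau$ is a coalgebra map. The one step with content is that $\tau^*$ is a map of extensions: evaluating at $a\in A$ and $h_1\in H_1$, the map $(\tau^*\otimes 1_B)\circ\sigma_2$ is precomposition of $\sigma_2(a)\in\Hom_k(H_2,B)$ with $\tau$, so
\[
\big((\tau^*\otimes 1_B)(\sigma_2(a))\big)(h_1) = \langle\rho_2(\tau(h_1)),a\rangle = \langle\rho_1(h_1),a\rangle = \big(\sigma_1(a)\big)(h_1),
\]
where the middle equality is exactly the defining triangle $\rho_2\circ\tau = \rho_1$. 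This is just naturality of the hom-tensor adjunction underlying Proposition~\ref{prop:meas}; since transposition reverses arrows, it is a functor out of $\widecheck{Alg}(A,B)_f^{op}$, with functoriality from $(\mathrm{id})^*=\mathrm{id}$ and $(\sigma\tau)^*=\tau^*\sigma^*$. To see it is an equivalence I would exhibit the quasi-inverse, again ``taking the dual'': a finite-dimensional extension $(S,\sigma)$ has $S^*=S^\circ$ a coalgebra, and Proposition~\ref{prop:meas} converts $\sigma: A \to S\otimes B \cong \Hom_k(S^*,B)$ into a measuring map $S^*\to\Hom_k(A,B)$. The two round trips are carried to the identity by the double-dual isomorphism $H\xrightarrow{\sim}H^{**}$ (resp.\ $S\xrightarrow{\sim}S^{**}$), which is an isomorphism of coalgebras (resp.\ algebras) respecting the measuring (resp.\ extension) data, because each such structure is cut out by finitely many tensor-diagram identities preserved under double dualization. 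Naming the quasi-inverse $*$ as well is thus justified; equivalently, full faithfulness holds because transposition restricts to a bijection between coalgebra and algebra maps matching the two triangle conditions, and essential surjectivity because $S\cong(S^*)^*$.

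\emph{The bicategorical statement.} I would define the identity-on-objects pseudofunctor $*:\widecheck{Alg}_f^{co}\to\widehat{Alg}_f$ acting on each hom-category by the equivalence above; here $(-)^{co}$ reverses $2$-cells only, which is precisely the $(-)^{op}$ appearing on each hom-category, while objects and the direction of $1$-cells are preserved. Compatibility with horizontal composition is supplied by the canonical isomorphism $(H\otimes K)^*\cong H^*\otimes K^*$: in $\widecheck{Alg}_f$ horizontal composition is the tensor product of the underlying coalgebras, in $\widehat{Alg}_f$ it is the tensor product $S\otimes T$ of the underlying algebras, and this isomorphism intertwines them while matching the measuring structure on $H\odot K$ with the extension structure on the composite of $H^*$ and $K^*$. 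Unit coherence comes from $E^*\cong k$, since the unit measuring coalgebra $E$ dualizes to the unit extension $(k,\mu)$. It then remains to check the pseudofunctor associativity and unit axioms, which reduce to the coherence of $(-)^*$ as a contravariant monoidal functor on finite-dimensional vector spaces. Being identity on objects with each hom-functor an equivalence, $*$ is a biequivalence, and the same double-dual data furnishes the reverse pseudofunctor.

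The hardest part will be the bicategorical bookkeeping rather than any single computation: confirming that duality reverses $2$-cells but preserves both the direction and the tensor order of $1$-cells, so that the correct operation is $(-)^{co}$ and no symmetry flip is needed in $(H\otimes K)^*\cong H^*\otimes K^*$, and then verifying that these coherence isomorphisms satisfy the pseudofunctor axioms. Each ingredient is a routine consequence of finite-dimensional duality and Proposition~\ref{prop:meas}; the difficulty lies entirely in keeping variances and tensor orders consistent throughout.
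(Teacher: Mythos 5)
Your proposal is correct and follows exactly the route the paper intends: the paper offers no detailed proof, simply asserting that Theorem \ref{thm:dual} is Proposition \ref{prop:meas} "recast in categorical terms" via the identification $\Hom_k(H,B)\cong H^*\otimes B$ for finite-dimensional $H$, and your argument is a careful filling-in of precisely that recasting (transposes of coalgebra maps as extension maps, the double-dual isomorphism giving the quasi-inverse, and $(H\otimes K)^*\cong H^*\otimes K^*$ with $E^*\cong k$ handling horizontal composition and units in the bicategorical statement). Your attention to the $(-)^{co}$ convention and tensor-order bookkeeping supplies detail the paper glosses over entirely.
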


When the finiteness restriction is dropped the subtle differences between coalgebras and algebras becomes significant. The next section introduces universal measuring coalgebra $P(A,B)$ which is the terminal object in $\widecheck{Alg}(A,B)$. The search for a corresponding initial object $F(A,B)$ in $\widehat{Alg}(A,B)$ generated this work.

\section{Universal measuring coalgebras}\label{sec:univ} 

This section further develops the theory of measuring coalgebras through the introduction of the universal measuring coalgebra and the Sweedler product which can be combined to introduce an algebra, the universal measuring algebra, which serves as a pre-dual for the universal measuring coalgebra.

\subsection{The universal measuring coalgebra}

The contents of this subsection are covered in Sweedler for example. This section is included to provide sufficient background to enable those unfamiliar with coalgebras to use the results.

Coalgebras have a strong finiteness property which is fundamental for the constructions in this paper.  One consequence is that the universal measuring coalgebra $P(A,B)$, the terminal element in the category $\widecheck{Alg}(A,B)$, exists. 

\begin{proposition} \label{prop:Coalgebra finiteness} Given a coalgebra $C (\Delta, \epsilon)$ and $x \in C$, there exists a finite dimensional subcoalgebra $D \subset C$ such that $x \in D$.
\end{proposition}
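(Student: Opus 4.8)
The plan is to prove this standard ``fundamental theorem of coalgebras'' by producing $D$ explicitly from the image of $x$ under a double comultiplication. First I would write
\[
(\Delta \otimes \mathrm{id})\Delta(x) = \sum_{i,j} a_i \otimes c_{ij} \otimes b_j \in C \otimes C \otimes C,
\]
arranging that the finite families $\{a_i\}$ and $\{b_j\}$ are each linearly independent; any element of $C^{\otimes 3}$ admits such a ``separated'' expression. I then set $D = \operatorname{span}_k\{c_{ij}\}$, which is finite dimensional by construction, and claim this is the required subcoalgebra.

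Containment $x \in D$ is the easy part. Applying $\epsilon$ to the two outer tensor factors and invoking the counit axiom twice gives $(\epsilon \otimes \mathrm{id} \otimes \epsilon)(\Delta\otimes\mathrm{id})\Delta = \mathrm{id}$, so that $x = \sum_{i,j}\epsilon(a_i)\epsilon(b_j)\,c_{ij}$ visibly lies in $D$.

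The substance is showing $\Delta(D) \subseteq D \otimes D$, equivalently $\Delta(c_{kl}) \in D \otimes D$ for all $k,l$. Because $\{a_i\}$ and $\{b_j\}$ are linearly independent, I can choose functionals $f_k, g_l \in C^*$ dual to them, with $f_k(a_i) = \delta_{ki}$ and $g_l(b_j) = \delta_{lj}$. Applying $\mathrm{id}\otimes\Delta\otimes\mathrm{id}$ to the displayed equation yields $\sum_{i,j} a_i \otimes \Delta(c_{ij}) \otimes b_j = \Delta^3(x)$ in $C^{\otimes 4}$, and contracting the outer slots with $f_k$ and $g_l$ extracts $\Delta(c_{kl}) = (f_k\otimes\mathrm{id}\otimes\mathrm{id}\otimes g_l)\Delta^3(x)$. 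The key move is then to recompute this same element $\Delta^3(x)$ in two further ways, which coassociativity guarantees are equal: splitting instead the last factor, $\Delta^3(x) = \sum_{i,j} a_i\otimes c_{ij}\otimes \Delta(b_j)$, and the first, $\Delta^3(x) = \sum_{i,j}\Delta(a_i)\otimes c_{ij}\otimes b_j$. Feeding these through $f_k\otimes\mathrm{id}\otimes\mathrm{id}\otimes g_l$ exhibits $\Delta(c_{kl})$ once as $\sum_j c_{kj}\otimes(\text{element of }C)$ and once as $\sum_i (\text{element of }C)\otimes c_{il}$, proving $\Delta(c_{kl}) \in D\otimes C$ and $\Delta(c_{kl})\in C\otimes D$ respectively. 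The elementary identity $(D\otimes C)\cap(C\otimes D) = D\otimes D$, valid for any subspace $D \subseteq C$, then forces $\Delta(c_{kl})\in D\otimes D$, completing the argument.

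I expect the main obstacle to be precisely this last step, the closure of $D$ under $\Delta$. Everything hinges on two ingredients that must be set up correctly at the outset: the separated expression with linearly independent outer families, so that the dual functionals $f_k, g_l$ exist, and the repeated use of coassociativity to slide the single extra comultiplication onto different tensor factors of the one fixed element $\Delta^3(x)$. The intersection identity $(D\otimes C)\cap(C\otimes D)=D\otimes D$, though routine linear algebra, is what converts the two one-sided containments into the two-sided one and should be stated explicitly rather than assumed.
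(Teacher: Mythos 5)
Your proof is correct and follows essentially the same route as the paper, which simply cites Sweedler (p.~46) and sketches taking $D$ to be the span of the middle tensor factors $x_{(2)}$ in a minimal expression for $\Delta^2 x$ --- your ``separated'' expression with linearly independent outer families $\{a_i\}$, $\{b_j\}$ is exactly that device, and your use of coassociativity plus dual functionals to get $\Delta(c_{kl}) \in (D\otimes C)\cap(C\otimes D) = D\otimes D$ is the standard completion of that sketch.
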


See Sweedler page 46. Essentially, consider $\triangle^2 x = \sum x_{(3)}\otimes x_{(2)} \otimes x_ {(1)}$ and take $D$ to be the span of the elements $x_{(2)}$ in a minimal expression for $\triangle^2$. 

\begin{definition} \label{def:univ} The \emph{universal measuring coalgebra} $\pi :P(A,B) \to \Hom_k(A,B)$ is the unique measuring coalgebra with the property that there is a unique coalgebra map $P(\rho)$ such that the diagram
\[
\xymatrixcolsep{5pc}
\xymatrix{
P(A,B) \ar[r]^{\pi} & \Hom_k(A,B) \\
H \ar[u]^ {P(\rho)}\ar[ur]_{\rho} &
}
\]
commutes for any measuring coalgebra $\rho:H \to Hom_k(A,B)$.  
\end{definition}

The existence of the universal measuring coalgebra is guaranteed because the finiteness property ensures that the coproduct over all \emph{finite dimensional} coalgebras measuring $A$ to $B$ is sufficiently large to capture all measuring coalgebras. The fact that the category of coalgebras has both colimits and coequalisers, then allows $P(A,B)$ to be constructed as follows.

\begin{proposition} \label{prop:P(A,B)exists} $P(A,B)$ exists, and is a terminal object in $\widecheck{Alg}(A,B)$.
\end{proposition}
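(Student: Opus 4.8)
The statement to prove is that the terminal object of $\widecheck{Alg}(A,B)$ described in Definition~\ref{def:univ} exists. The plan is to produce a single \emph{weakly terminal} measuring coalgebra and then cut it down to a terminal one, with Proposition~\ref{prop:Coalgebra finiteness} controlling everything through finite dimensions.

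First I would set up the finite-dimensional bookkeeping. A linear subcoalgebra of a measuring coalgebra is again measuring, since the identities of the measuring definition restrict; dually, a quotient coalgebra of a measuring coalgebra inherits a measuring map, as the identities descend along surjections. Together with Proposition~\ref{prop:Coalgebra finiteness} this shows that each measuring coalgebra $(H,\rho)$ is the filtered union of its finite-dimensional measuring subcoalgebras, so that $H=\varinjlim_\alpha D_\alpha$ and $\rho$ is the unique common extension of the $\rho|_{D_\alpha}$. I would also record that $\widecheck{Alg}(A,B)$ is cocomplete: colimits of the underlying coalgebras exist (coproducts and coequalisers suffice), and a compatible cocone of measuring maps induces a measuring map on the colimit because the measuring identities are checked on elements.

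Next comes the weakly terminal object, which I expect to be the main obstacle. The naive candidate is the coproduct $W_0=\coprod_i H_i$ of representatives of the (essentially small) collection of finite-dimensional measuring coalgebras. This receives a morphism from every measuring coalgebra that is itself a coproduct of finite-dimensional pieces, but it is \emph{not} weakly terminal in general: an infinite-dimensional irreducible measuring coalgebra — for instance a single point-component of $\mathbb{C}[z]^\circ$ when $A=B=\mathbb{C}[z]$ — admits no coalgebra map into a coproduct of finite-dimensional coalgebras, because its image would be an infinite-dimensional irreducible subcoalgebra, whereas an irreducible subcoalgebra of a coproduct lies inside a single finite-dimensional summand. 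The honest repair is to enlarge $W_0$ to a genuine weakly terminal object. The clean device is the cofree coalgebra $\mathcal{C}(V)$ on $V=\Hom_k(A,B)$, characterised by $\mathrm{Coalg}\bigl(C,\mathcal{C}(V)\bigr)\cong\Hom_k(C,V)$ (see \cite{Swee1969}): every measuring coalgebra $(H,\rho)$ determines a unique coalgebra map $\tilde\rho\colon H\to\mathcal{C}(V)$ lifting $\rho$ along the universal map $\pi_V\colon\mathcal{C}(V)\to V$, so $\mathcal{C}(V)$ is weakly terminal even though $\pi_V$ need not itself measure.

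Finally I would construct $P(A,B)$ as the sum of all subcoalgebras $D\subseteq\mathcal{C}(V)$ on which $\pi_V|_D$ measures; the measuring identities cut out a linear subspace of $\mathcal{C}(V)$, so such a sum is again a subcoalgebra on which $\pi_V$ measures, and it is the largest one, equipped with $\pi=\pi_V|_{P(A,B)}$. Terminality is then immediate: for any measuring $(H,\rho)$ the image $\tilde\rho(H)$ is a quotient of $H$, hence measuring, hence a measuring subcoalgebra of $\mathcal{C}(V)$, so $\tilde\rho$ factors through the inclusion $P(A,B)\hookrightarrow\mathcal{C}(V)$, and the factorisation is unique because $\tilde\rho$ is the unique lift to $\mathcal{C}(V)$ and the inclusion is monic. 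If one instead insists on staying within the colimit/coequaliser framework, one takes $P(A,B)$ to be the joint coequaliser of all endomorphisms of a genuinely weakly terminal $W$ and proves uniqueness by coequalising two competing maps into $P(A,B)$ and using weak terminality of $W$ to deduce that this coequaliser is an isomorphism. The remaining checks — that the sum of measuring subcoalgebras measures, and that $P(A,B)$ is independent of the chosen representatives — are routine.
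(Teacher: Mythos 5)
Your proof is correct, but it is not the paper's argument: you have in effect reconstructed Sweedler's original construction via the cofree coalgebra, whereas the paper constructs $P(A,B)$ as a coequaliser. In the paper, $K$ is the coproduct of representatives of all \emph{finite-dimensional} measuring coalgebras, $L$ is a coproduct indexed by the morphisms between them, and $P(A,B)=K/\mathrm{im}(f-g)$; terminality is then obtained by writing an arbitrary measuring coalgebra $H$ as the ascending union of its finite-dimensional measuring subcoalgebras, mapping each of those into $K$, and using the coequaliser identifications (which include all inclusions $D\subset D'$) to see that these maps are compatible and glue to a morphism $H\to P(A,B)$. This bears directly on your objection to the ``naive candidate'': you are right that the bare coproduct $W_0$ is not weakly terminal — your example of a connected component of $\mathbb{C}[z]^\circ$ is sound, since a connected subcoalgebra of a direct sum lies in a single summand, so no map from it into $W_0$ can be compatible with the injective measuring map — but the paper never claims weak terminality of $K$: no map $H\to K$ is ever used, only maps of finite-dimensional subcoalgebras, glued after passing to the quotient. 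So the two routes trade different costs. Yours needs the existence of cofree coalgebras plus the (correct) observations that measuring is a linear condition, so the sum of measuring subcoalgebras of $\mathcal{C}(V)$ measures, and that images of measuring coalgebras along maps commuting with the structure maps measure; in return, terminality falls out cleanly from the cofree universal property and monicity of the inclusion. The paper's route stays inside colimits of coalgebras but must verify that $\mathrm{im}(f-g)$ is a coideal sent to $0$ in $\Hom_k(A,B)$, and must carry out the gluing argument.

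Two imprecisions in your write-up, neither fatal. First, ``a quotient coalgebra of a measuring coalgebra inherits a measuring map'' is only true when the measuring map factors through the quotient; it is false for an arbitrary coideal. In your application it is used correctly, since you apply it to $\tilde\rho(H)\subseteq\mathcal{C}(V)$ where $\pi_V\circ\tilde\rho=\rho$ supplies the factorisation. Second, your example does admit coalgebra maps into $W_0$ (e.g. collapsing onto the span of a grouplike); what it admits is no \emph{morphism of measuring coalgebras}, because compatibility with the injective measuring map forces injectivity. Relatedly, the step ``the image is irreducible'' holds for pointed irreducible (connected) coalgebras such as your example, but not for irreducible coalgebras in general: a quotient of the $2\times 2$ matrix coalgebra by the coideal spanned by one off-diagonal element has two grouplikes.
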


\begin{proof} See Sweedler \cite{Swee1969} p143, or CV \cite{hyland2017hopf} p18. $P(A,B)$ can be constructed as a coequaliser as follows.
For algebras $A$ and $B$, construct the coproduct.
 \[
 K = \coprod_{\kappa}H_{\kappa}
 \]
 where the coproduct is taken over all isomorphism classes of \emph{finite dimensional} coalgebras $(H_{\kappa}, \rho_{\kappa})$ that measure A to B. The finite dimensionality is critical for the coproduct to be well defined.  Define $L$ to be the coproduct
 \[
 L = \coprod_{\lambda: H_\kappa \to H_{\kappa '}} H_{\kappa, \lambda}
 \]
where $H_{\kappa, \lambda}$ is a copy of $H_\kappa$ for all $\lambda \in \widecheck{Alg}(A,B)$. Define two maps $f, g : L \to H$ by the image of each factor $H_{\kappa, \lambda}$. Define $f$ to be the map that sends $H_{\kappa, \lambda}$ identically to $H_\kappa$ followed by the inclusion of $H_\kappa$ in $K$.  Let $g$ be the map that sends $H_{\kappa, \lambda}$ to $H_{\kappa '}$ followed by the inclusion of $H_{\kappa '}$ in $K$. 

$P(A,B)$ to be the coequaliser of  this diagram; effectively
\[
P(A,B) = K/im (f-g).
\]

That the image of $f-g$ is a coideal of the coalgebra $K$, and that it is sent to 0 in $\Hom(A,B)$ need to be verified.  

That $P(A,B)$ is a terminal object in $\widecheck{Alg}(A,B)$ depends again upon the finiteness property of coalgebras: $P(A,B)$ is the ascending union of finite dimensional coalgebras. For an arbitrary measuring coalgebra $H$, the map of measuring coalgebras from $H$ to $P(A,B)$ can be built up as the union of the inclusions of finite dimensional subcoalgebras of $H$ in $K$. Being a terminal object in this category immediately shows uniqueness. 
\end{proof}

\begin{corollary} \label{cor:duco} 
For any algebra $S$, $S^\circ = P(S,k)$.
\end{corollary}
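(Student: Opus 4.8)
The plan is to identify $S^\circ = P(S,k)$ by showing that the dual coalgebra $S^\circ$ of Definition \ref{def:dualcoalg}, equipped with its canonical inclusion $\pi\colon S^\circ \hookrightarrow S^* = \Hom_k(S,k)$, is a terminal object in $\widecheck{Alg}(S,k)$; the identification then follows from the uniqueness of terminal objects (Definition \ref{def:univ}, Proposition \ref{prop:P(A,B)exists}). Throughout I would exploit that in the case $B = k$ the target $\Hom_k(S,k) = S^*$ is the full linear dual, and that by Proposition \ref{prop:meas} a measuring map $\rho\colon H \to S^*$ corresponds to an algebra homomorphism $\hat\rho\colon S \to [H,k] = H^*$, where $H^*$ carries the convolution (dual) algebra structure.

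First I would check that $\pi\colon S^\circ \to S^*$ is itself a measuring map. The comultiplication on $S^\circ$ is dual to the multiplication $\mu$ on $S$, so that $\langle \triangle f, s\otimes s'\rangle = \langle f, ss'\rangle$, and its counit is evaluation at $1_S$. Reading these two identities against the two defining equations of a measuring map shows that the measuring conditions for $\pi$ hold essentially by definition of the dual coalgebra structure; this step is routine.

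The substance of the argument is terminality. Given an arbitrary measuring coalgebra $\rho\colon H \to S^*$, I would first argue that $\rho$ factors through $S^\circ$, i.e. that $\rho(h)$ vanishes on a cofinite ideal of $S$ for every $h \in H$. This is the main obstacle, and it is exactly where the finiteness property of coalgebras (Proposition \ref{prop:Coalgebra finiteness}) does the work: choose a finite dimensional subcoalgebra $D \subseteq H$ with $h \in D$; the restriction $\rho|_D$ corresponds under Proposition \ref{prop:meas} to an algebra homomorphism $\phi\colon S \to D^*$ into the finite dimensional algebra $D^*$, whose kernel is therefore a cofinite ideal of $S$. Since $\langle \rho(h), s\rangle = \langle \phi(s), h\rangle$ vanishes whenever $\phi(s) = 0$, the functional $\rho(h)$ annihilates $\ker\phi$ and hence lies in $S^\circ$.

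With the corestriction $\bar\rho\colon H \to S^\circ$ in hand, characterised by $\pi\circ\bar\rho = \rho$, I would verify that it is a coalgebra map: after dualising the multiplication of $S$, the measuring identity $\langle \rho(h), ss'\rangle = \sum_{(h)} \langle \rho(h_{(2)}), s\rangle\langle \rho(h_{(1)}), s'\rangle$ is precisely the statement $\triangle_{S^\circ}\,\bar\rho = (\bar\rho\otimes\bar\rho)\,\triangle_H$, while the unit condition $\langle \rho(h), 1_S\rangle = \epsilon(h)$ gives compatibility with counits. Uniqueness of $\bar\rho$ among coalgebra maps lying over $\rho$ is immediate because $\pi$ is injective. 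This exhibits $(S^\circ, \pi)$ as terminal in $\widecheck{Alg}(S,k)$ and completes the identification $S^\circ = P(S,k)$.
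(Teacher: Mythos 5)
Your proposal is correct, and its engine is the same as the paper's: both arguments turn on Proposition \ref{prop:Coalgebra finiteness} (every element of a coalgebra lies in a finite dimensional subcoalgebra) combined with Proposition \ref{prop:meas} (a measuring map from a finite dimensional coalgebra $D$ corresponds to an algebra map $S \to D^*$), so that every measuring functional vanishes on a cofinite ideal and hence lands in $S^\circ$. The difference is organisational. The paper never verifies the universal property of $S^\circ$ directly; it uses terminality of the already-constructed $P(S,k)$ (Proposition \ref{prop:P(A,B)exists}) to produce a coalgebra map $j\colon S^\circ \to P(S,k)$, notes $j$ is injective because $\pi \circ j$ is the inclusion $S^\circ \subset S^*$, and then runs the finiteness argument on $P(S,k)$ itself to show $\pi(P(S,k)) \subseteq S^\circ$. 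You instead run the finiteness argument on an arbitrary measuring coalgebra $H$, check that the corestriction $\bar\rho\colon H \to S^\circ$ is a coalgebra map and is the unique one over $\rho$, and conclude by uniqueness of terminal objects. Your route costs a little extra verification (the coalgebra-map and uniqueness checks, which the paper gets for free from terminality of $P(S,k)$), but it buys a self-contained argument: in the paper's version, concluding that $j$ is actually an isomorphism still requires the reader to supply a final gluing step --- namely that the corestricted $\pi\colon P(S,k) \to S^\circ$ is itself a map of measuring coalgebras, so that the composite $j \circ \pi$ must be the identity on $P(S,k)$ by terminality --- a point the paper leaves implicit. Your direct verification of the universal property makes that bookkeeping unnecessary.
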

\begin{proof}
Evidently $S^\circ \to \Hom(S,k)$ measures so that there is a coalgebra map $S^\circ \to P(S,k)$.  This map is injective since composition with the measuring map $\pi$ is just the inclusion $S^\circ \to \Hom(S,k)$. Conversely, if $p$ is any element of $P(S,k)$, it is an element of a finite dimensional subcoalgebra $H$ of $P(S,k)$. The measuring map $\pi:H \to \Hom(S,k) = S^*$ corresponds to an algebra map $\pi:S \to H^*$, thus $\pi(p)$ vanishes on the cofinite kernel of that homomorphism, and hence $\pi(p)$ is in $S^\circ$.
\end{proof}

The correspondence between coalgebra maps $H \to P(A,B)$ and algebra homomorphisms $A \to [H,B]$ is made formal in the following corollary.

\begin{proposition} \label{prop:natequiv}   There is a natural equivalence
\[
Coalg(-, P(A,B))  \cong Alg(A, [-,B]).
\]
\end{proposition}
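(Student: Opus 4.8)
The plan is to realise the asserted equivalence as the composite of two bijections that are natural in the coalgebra variable: the first supplied by the universal property of $P(A,B)$, the second by Proposition \ref{prop:meas}. Throughout I regard both $Coalg(-,P(A,B))$ and $Alg(A,[-,B])$ as functors $Coalg^{op}\to Set$, and the goal is a natural isomorphism between them.

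First I would fix a coalgebra $H$ and identify $Coalg(H,P(A,B))$ with the set of measuring maps $H\to\Hom_k(A,B)$. Given a coalgebra map $f:H\to P(A,B)$, the composite $\pi\circ f$ measures: since $f$ intertwines the comultiplications and counits, the measuring identities for $\pi\circ f$ follow at once from those for $\pi$. Conversely, a measuring map $\rho:H\to\Hom_k(A,B)$ determines, by Definition \ref{def:univ}, a unique coalgebra map $P(\rho):H\to P(A,B)$ with $\pi\circ P(\rho)=\rho$. Uniqueness in the universal property forces these two assignments to be mutually inverse, so $Coalg(H,P(A,B))$ is in bijection with the measuring maps out of $H$. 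Composing with the hom–tensor bijection of Proposition \ref{prop:meas}, which identifies measuring maps $H\to\Hom_k(A,B)$ with algebra homomorphisms $A\to[H,B]$, yields for each $H$ a bijection
\[
Coalg(H,P(A,B))\;\cong\;Alg(A,[H,B]).
\]

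Finally I would verify naturality in $H$, which I expect to be the only step requiring real care. The right-hand side is made contravariant by noting that for a coalgebra map $\phi:H'\to H$ precomposition $\alpha\mapsto\alpha\circ\phi$ is an \emph{algebra} homomorphism $[\phi,B]:[H,B]\to[H',B]$: because $\phi$ respects comultiplication and counit, the defining formula for the convolution product shows that convolution products and units are preserved. Applying $Alg(A,-)$ then gives the induced map, while the left-hand side is contravariant by precomposition with $\phi$. The work is to trace an element $f\in Coalg(H,P(A,B))$ around both routes of the naturality square and confirm that the homomorphism $A\to[H,B]$ attached to $f$, followed by $[\phi,B]$, coincides with the homomorphism attached to $f\circ\phi$. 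This reduces to the naturality of the Proposition \ref{prop:meas} correspondence in $H$, which is immediate from the evaluation formula for the convolution product; hence no genuinely new computation is needed beyond unwinding the definitions, and the hard part is purely the bookkeeping of keeping the two contravariant structures aligned.
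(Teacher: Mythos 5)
Your proposal is correct and takes essentially the same approach as the paper: the paper's (very brief) proof likewise treats the equivalence as the composite of the universal property of $P(A,B)$ with the bijection of Proposition \ref{prop:meas}, noting that this correspondence commutes with coalgebra maps $\varphi$, i.e.\ that $Coalg(\varphi,P(A,B))$ corresponds to $Alg(A,[\varphi,B])$. You have simply written out the details (mutual inverseness via uniqueness, the fact that $[\phi,B]$ is an algebra map, and the naturality square) that the paper defers to Sweedler.
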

\begin{proof}
Again, see Sweedler, p 143. The symbol $\cong$ indicates that this is a natural equivalence, meaning that the correspondence between the two functors $Coalg(-, P(A,B))$ and $ Alg(A, [-,B])$ observed in \ref{prop:meas} commutes appropriately with coalgebra maps $\varphi: H \to K$: $Coalg(\varphi, P(A,B))$ corresponds to $Alg(A, [\varphi,B])$.
\end{proof}

\subsection{The Sweedler product}
\label{sec:square}

The principle tool in this paper, the Sweedler product, is an algebra, $A \triangleleft H$, a tensor of an algebra A with a coalgebra H which serves as an adjoint to $\Hom$ so that
\[
\Hom(A\triangleleft -, B) \cong \Hom(A, [-B])
\]
The idea was introduced in Anel and Joyal \cite{anel2013sweedler} for use in the differential graded setting, and the main theorems in this section are found in their paper on pages 122 to 124. This paper studies the construction as a practical tool for extending the scope of "morphism" in other settings, and generalises the construction to the setting of modules and comodules.
\begin{definition}

\label{def: square}Define $A\triangleleft H$, the \emph{Sweedler product} to be the quotient of the tensor algebra $T(A\otimes H)/J$ where $J$ is the minimal ideal which makes the following diagrams commute. 
\[
\xymatrixcolsep{3pc}
\xymatrix{
A \otimes A\otimes H \ar[d]^{\mu\otimes 1} \ar[r]^{1 \otimes \triangle}  &A\otimes A\otimes H \otimes H \ar[r]& (A\otimes H) \otimes (A\otimes H)\ar[r]& T(A\otimes H)  \ar[d]^s \\
A\otimes H \ar[rr]^r  & &T(A \otimes H) \ar[r]&A \triangleleft H & }
\]
\[
\xymatrixcolsep{3pc}
\xymatrix{
k\otimes H \ar[r]^{1_A\otimes 1} \ar[d]^{1\otimes \epsilon} & A\otimes H \ar[r] & T(A\otimes H) \ar[dr]  \\
k\otimes k \ar[r] &k \ar[r]& T(A\otimes H) \ar[r] & A\triangleleft H}
\]
\end{definition}
\begin{theorem} \label{thm:actions} There are functorial actions
\[
\triangleleft: Alg \times Coalg \to Alg
\]

\[
[-,-] :  Coalg^{op} \times Alg \to Alg
\]
providing a tensor hom adjunction
\[
Alg(A,[H,B]) \cong Alg(A \triangleleft H, B).
\]
\end{theorem}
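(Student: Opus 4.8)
The plan is to reduce the claimed adjunction to Proposition~\ref{prop:meas} by exploiting the universal property of the tensor algebra. The heart of the matter is the hom-set bijection $Alg(A \triangleleft H, B) \cong Alg(A, [H,B])$; once this is in place, naturality and the functoriality of the two actions follow by tracking the correspondence through the defining universal properties. I would organise the argument in three stages: first the set-level bijection, then functoriality and naturality in $A$ and $B$ for fixed $H$, and finally the functoriality in $H$ (covariant for $\triangleleft$, contravariant for $[-,-]$).

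For the bijection, first I would unwind the universal property of $T(A\otimes H)$: an algebra homomorphism $g: T(A\otimes H)\to B$ is the same datum as a linear map $A\otimes H \to B$, and by the vector-space tensor--hom adjunction this is the same as a linear map $\tilde\rho: A \to \Hom_k(H,B) = [H,B]$, via $\tilde\rho(a)(h) = g(a \triangleleft h)$, writing $a \triangleleft h$ for the image of $a \otimes h$ in $A\triangleleft H$. The content of Definition~\ref{def: square} is that the two commuting diagrams force exactly the relations
\[
(aa')\triangleleft h = \sum_{(h)} (a\triangleleft h_{(2)})(a'\triangleleft h_{(1)}), \qquad 1_A \triangleleft h = \epsilon(h)\,1 ,
\]
in $A \triangleleft H$. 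Reading these against the convolution product of $[H,B]$, the first is precisely the statement that $\tilde\rho(aa') = \tilde\rho(a) * \tilde\rho(a')$ and the second that $\tilde\rho(1_A)$ is the convolution unit $h \mapsto \epsilon(h)1_B$. Hence an algebra homomorphism out of $T(A\otimes H)$ annihilates the generators of $J$ if and only if $\tilde\rho$ is an algebra homomorphism $A \to [H,B]$ --- which is the dual form of the measuring condition recorded in Proposition~\ref{prop:meas}. This yields the bijection.

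For functoriality and naturality, fix $H$ first. The functor $[H,-]: Alg\to Alg$ acts on an algebra map $\psi: B\to B'$ by postcomposition $f\mapsto \psi\circ f$, which is a convolution homomorphism; the functor $-\triangleleft H: Alg\to Alg$ acts on an algebra map $A\to A'$ through the induced map of tensor algebras $T(A\otimes H)\to T(A'\otimes H)$, which carries $J$ into the corresponding ideal. Tracking $\tilde\rho(a)(h)=g(a\triangleleft h)$ through these shows the bijection is natural in $A$ and $B$, so $-\triangleleft H$ is left adjoint to $[H,-]$. Functoriality in $H$ is then handled symmetrically: a coalgebra map $\phi: H\to H'$ induces a convolution homomorphism $[H',B]\to[H,B]$ by precomposition (contravariantly, accounting for the $Coalg^{op}$) and a map $A\triangleleft H \to A\triangleleft H'$ via $1_A\otimes\phi$ on tensor algebras; in each case the check that the convolution products, respectively the ideal $J$, are respected reduces to $\phi$ intertwining $\triangle$ and $\epsilon$ --- precisely the data appearing in the two diagrams of Definition~\ref{def: square}. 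Compatibility of the whole family of bijections with these two actions delivers the parametrised adjunction as stated.

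The main obstacle I anticipate is making the characterisation of $J$ precise. Since $J$ is defined as the \emph{minimal} ideal making the two diagrams commute rather than by explicit generators, I must first verify that $J$ is generated as a two-sided ideal by the differences of the two legs of each diagram, so that an algebra homomorphism out of $T(A\otimes H)$ kills $J$ exactly when it kills those relation elements. Minimality is what upgrades the easy implication (a homomorphism $\tilde\rho$ produces a map annihilating the relations) to a genuine bijection: it guarantees that no \emph{extra} relations are imposed, so distinct homomorphisms $A \to [H,B]$ remain distinct after descent to $A\triangleleft H$. The remaining bookkeeping --- that the shuffle isomorphism $A\otimes A\otimes H\otimes H \cong (A\otimes H)\otimes(A\otimes H)$ and the identifications $k\otimes H \cong H$ used in the diagrams are natural in $A$ and $H$ --- is routine, and is exactly what propagates the bijection to the asserted natural equivalence.
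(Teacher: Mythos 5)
Your proposal is correct and follows essentially the same route as the paper: both arguments identify algebra homomorphisms out of $T(A\otimes H)$ with linear maps $A \to [H,B]$ via the universal property of the tensor algebra and the vector-space tensor--hom adjunction, show that killing the ideal $J$ (generated by the differences of the two legs of the defining diagrams) is exactly the convolution-homomorphism/measuring condition of Proposition~\ref{prop:meas}, and obtain functoriality by checking that induced maps on tensor algebras carry the defining ideal into the corresponding ideal. The point you flag about $J$ being generated by those relation elements is likewise used (stated without fuss) in the paper's functoriality argument.
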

\begin{proof}  
\emph{Functoriality}
The functoriality of the action $[-,-]$ is familiar.  For the construction of the Sweedler product, the outline of the argument is as follows.

Let $A_i$, $i = 1, \ 2 , \ 3$ be algebras and $H_i $ be coalgebras, let $f_i:A_i\to A_{i+1}, \ g_i: H_i \to H_{i+1}$, $i= 1, \  2$ be algebra and coalgebra homomorphisms respectively.  The first requirement is that the linear maps $f_i \otimes g_i:A_i \otimes A_{i+1} \otimes K_{i+1}$ should pass to algebra homomorphisms $f_i \triangleleft g_I : A_i\triangleleft K_i \to A_{i+1} \triangleleft K_{i+1}$, the second is that the composition should behave in a functorial manner:
\[
(f_2\circ f_1)\triangleleft (g_2\circ g_1) = (f_2 \triangleleft g_2)\circ(f_1 \triangleleft g_1)
\]

For $f_i \otimes g_i$ to pass to a homomorphism $f_i \triangleleft g_i$ it is necessary that the ideals $J_i$ defining the algebras $A_i\triangleleft H_i$ satisfy $T(f_i\otimes g_i) J_i \subset J_{i+1}$. Consider the diagrams

\[
\xymatrix{
A_i \otimes A_i \otimes H_i \ar[ddd]_f \ar[dr] ^m \ar[rrr]^a& & & A_i \otimes  H_i \otimes A_i \otimes H_i  \ar[dl]_c \ar[ddd]^f \\
&A_i \otimes H_i \ar[d]^f \ar[r]^j & T(A_i \otimes H_i )\ar[d]_k& \\
&A_{i+1} \otimes H_{i+1}\ar[r] ^j& T(A_{i+1} \otimes H_{i+1}) & \\
A_{i+1} \otimes A_{i+1} \otimes H_{i+1}\ar[rrr] ^a\ar[ur]^m & & & A_{i+1} \otimes H_{i+1}\otimes A_{i+1} \otimes H_{i+1} \ar[ul]_c
}
\]

Here maps labelled $a$ have to do with comultiplication on $H$, maps labelled $m$ involve multiplication in $A$, maps labelled $f$ have to do with maps $A_i \to A_{i+1}$ and $H_i \to H_{i+1}$, are the inclusion of the appropriate subspaces in the tensor algebra, and $k$ is the map induced by $f_i\otimes g_i$ on the tensor algebra.

\[
\xymatrix{
k \otimes H_i \ar[ddd] _f\ar[dr] ^e\ar[rrr]^u& & &A_i\otimes H_i \ar[dl]_a \ar[ddd]^f\\
&k \otimes k\ar[d]_{Id} \ar[r]^b & T(A_i \otimes H_i )\ar[d]^k & \\
&k\otimes k \ar[r]^b & T(A_{i+1} \otimes H_{i+1}) & \\
k\otimes H_{i+1}\ar[rrr] ^u\ar[ur] ^e& & & A_{I+1} \otimes H_{i+1} \ar[ul]_g
}
\]

Here the maps labelled $u$ are the inclusions of $k$ as units in the $A_i$, the maps $e$ have to do with the counits in the $A_i$, $b$ is the inclusion of $k\otimes k \to k$ in the tensor algebra. $k$ again is the map on the tensor algebra induced by $f_i\otimes g_i$.

The requirement is that $k(J_i) \subset J_{i+1}$. Observe that the ideal $J_i$ is generated by the images of $j\circ m - c \circ a$ in the first diagram and $a \circ u - e \circ f$ in the second.  The commutativity of the two diagrams ensures that the generators of $J_i$ are sent by $k$ into the generators of $J_{i+1}$, hence  $k(J_i) \subset J_{i+1}$.

Functoriality follows as a consequence of the fact that the statement is true for the respective tensor algebras: 
\[
T(f_2\circ f_1 \otimes g_2 \circ g_1) = T(f_2 \otimes g_2) \circ T(f_1 \otimes g_1).
\]

To see that $A\triangleleft H$ has the desired adjunction properties, observe that the following three sets are naturally identified as vector spaces:
\[
\Hom_k(A,[H,B]) \cong \Hom_k(A\otimes H,B) \cong \Hom_k(H, \Hom_k(A,B)).
\]

An element $\varphi$ in $\Hom_k(A,[C,B])$ is an algebra homomorphism exactly when the corresponding element in $\Hom_k(C, \Hom_k(A,B))$ is a measuring map. But the condition that $\varphi$ measures is exactly the statement that the algebra map 
\[
T\phi:T(A\otimes H) \to B
\]
factors through $A\triangleleft H$. As with proposition \ref{prop:natequiv}
naturality is a consequence of the underlying fact for vector spaces.
\end{proof}

\subsection{Measuring coalgebras for coalgebras \label{sec:meas.co.for.co}}

In the previous section, the bicategories $\widecheck{Alg}$ and $\widehat{Alg}$ were introduced to provide a context which includes more general morphisms between algebras. The existence of  the universal measuring coalgebra provides another approach, that of enriched categories, to be introduced in the next section, replacing the morphism set $Alg(A,B)$ with the measuring coalgebra $P(A,B)$. 

Coalgebras carry a similar structure. There is a parallel concept of measuring coalgebra for coalgebras which introduces the possibility of generalised maps between coalgebras, and a final object in the category of measuring coalgebras for coalgebras, which will enable us to establish a full equivalence between these three approaches to the generalisation of homomorphisms.

\begin{definition} Let $H, \ K, \ L$ be coalgebras. A linear map $\lambda: L \to \Hom_k(H, K)$ is said to \emph{measure} if the map $\lambda$ regarded as an element of $\Hom_k(L \otimes H, K)$ is a coalgebra map. As with measuring coalgebras for algebras, maps between measuring coalgebras $L_1, \ L_2$ for coalgebras are those for which the diagram 
\[
\xymatrixcolsep{5pc}
\xymatrix{
L_1 \ar[d] \ar[r]^{\lambda_1} & \Hom_k(H,K) \\
L_2 \ar[ur]_{\lambda_2} &
}
\]
commutes.
\end{definition}

\begin{definition}
For coalgebras $H, \ K$, the \emph{universal measuring coalgebra}  $P(H,K)$ is a coalgebra with a map  $\pi:P(H,K) \to \Hom_k(H,K)$ with the property that given any measuring coalgebra $\lambda: L\to \Hom_k(H,K)$ there is a unique map of coalgebras making the following diagram commute.
\[
\xymatrixcolsep{5pc}
\xymatrix{
L \ar[d] \ar[r]^{\lambda} & \Hom_k(H,K) \\
P(H,K) \ar[ur]_{\pi} &
}
\]
\end{definition}

As for algebras, there is no obstruction to forming the required coequalisers, so that there is a terminal object in the category of measuring coalgebras for any pair of coalgebras $(H,K)$; i.e. $ P(H,K)$ exists.

Proposition \ref{prop:natequiv} for the universal measuring coalgebra for algebras has its parallel for the universal measuring coalgebra for coalgebras.
\begin{proposition} \label{prop:natequiv.co}There is a natural equivalence
\[
Coalg(-, P(H,K)) \cong Coalg(-\otimes H, K)
\]
\end{proposition}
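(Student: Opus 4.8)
The plan is to realise the claimed equivalence as a composite of two pointwise bijections — one supplied by the universal property of $P(H,K)$ and one by the very definition of a measuring map — and then to inherit naturality from the underlying tensor–hom identification of vector spaces, exactly as in Proposition \ref{prop:natequiv}. Since the existence of $P(H,K)$ has already been granted, I may work directly with its defining property.

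First I would fix a coalgebra $L$ and set up a bijection between $Coalg(L, P(H,K))$ and the set of measuring maps $L \to \Hom_k(H,K)$. Given a coalgebra map $\phi: L \to P(H,K)$, the composite $\pi \circ \phi$ is measuring: writing $\tilde\pi: P(H,K) \otimes H \to K$ for the coalgebra map corresponding to the measuring map $\pi$, the map corresponding to $\pi \circ \phi$ is $\tilde\pi \circ (\phi \otimes 1_H)$, a composite of coalgebra maps and hence itself a coalgebra map. Conversely, the universal property of $P(H,K)$ asserts precisely that every measuring $\lambda: L \to \Hom_k(H,K)$ factors as $\pi \circ \phi$ for a unique coalgebra map $\phi$. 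These two assignments are mutually inverse, which yields the first bijection.

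The second step is immediate from the definition of measuring: under the standard vector-space adjunction $\Hom_k(L, \Hom_k(H,K)) \cong \Hom_k(L \otimes H, K)$, a linear map $L \to \Hom_k(H,K)$ measures if and only if the corresponding map $L \otimes H \to K$ is a coalgebra map, so the set of measuring maps is literally $Coalg(L \otimes H, K)$. Composing the two bijections gives the pointwise isomorphism $Coalg(L, P(H,K)) \cong Coalg(L \otimes H, K)$. It then remains to check naturality in $L$: for a coalgebra map $\psi: L' \to L$ one compares precomposition with $\psi$ on the left with precomposition with $\psi \otimes 1_H$ on the right, and since both transports are governed by the same natural tensor–hom identification the square commutes. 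I expect the only point of genuine (if small) content to be the measuring-preservation claim in the first bijection — confirming via the factorisation $\tilde\pi \circ (\phi \otimes 1_H)$ that postcomposition with $\pi$ carries coalgebra maps into $P(H,K)$ to measuring maps, so that the universal property supplies an honest inverse rather than a mere surjection; everything else reduces, as in the algebra case, to the naturality of the underlying vector-space adjunction.
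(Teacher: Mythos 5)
Your proposal is correct and follows exactly the argument the paper intends: the paper states this proposition without proof, as the parallel of Proposition \ref{prop:natequiv}, and the intended justification is precisely your two-step decomposition --- the universal property of $P(H,K)$ identifies $Coalg(L,P(H,K))$ with measuring maps $L \to \Hom_k(H,K)$, while the definition of measuring for coalgebras identifies those with $Coalg(L\otimes H, K)$, with naturality inherited from the vector-space tensor--hom adjunction. Your verification that postcomposition with $\pi$ lands in measuring maps (via $\tilde\pi \circ (\phi \otimes 1_H)$ being a composite of coalgebra maps) is a detail the paper leaves implicit, and it is correctly handled.
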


\subsection{Enriched categories of algebras and coalgebras\label{sec:enriched}}

The concept of enriched categories, in which morphisms and compositions of morphisms may be objects and morphisms in a category more general than sets,  provides an alternative to the bicategory $\widecheck{Alg}$ as a means of including maps between algebras respecting the multiplicative structure in more sensitive ways.  
\begin{proposition}
\begin{enumerate}
	\item There is a category $\underline{Alg}$ whose objects are algebras and whose hom-sets are the universal measuring coalgebras $P(A,B)$. $\underline{Alg}$ has the structure of an enriched category.
	\item There is a category $\underline{Coalg}$ whose objects are coalgebras and whose hom-sets are the universal measuring coalgebras $P(H,K)$. $P(H,K)$ has the structure of an enriched category.
\end{enumerate}
\end{proposition}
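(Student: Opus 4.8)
The plan is to exhibit both $\underline{Alg}$ and $\underline{Coalg}$ as categories enriched over the monoidal category $(Coalg, \otimes, E)$, whose tensor is the tensor product of coalgebras and whose unit is the ground field viewed as the one dimensional coalgebra $E$ of Proposition \ref{prop:bicat}. For $\underline{Alg}$ the hom-objects are the coalgebras $P(A,B)$, which exist by Proposition \ref{prop:P(A,B)exists}; the data still to be supplied are the composition morphisms, the identities, and the verification of the associativity and unit coherence diagrams as equalities of coalgebra maps.

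First I would define composition. The key observation is that the horizontal composition $\odot$ of Proposition \ref{prop:bicat} shows that the tensor $P(B,C)\otimes P(A,B)$, equipped with the measuring map
\[
P(B,C)\otimes P(A,B) \xrightarrow{\ \pi\otimes\pi\ } \Hom(B,C)\otimes\Hom(A,B) \to \Hom(A,C),
\]
is itself a measuring coalgebra from $A$ to $C$. Since $P(A,C)$ is terminal in $\widecheck{Alg}(A,C)$ by Proposition \ref{prop:P(A,B)exists}, there is a unique map of measuring coalgebras
\[
c_{A,B,C}: P(B,C)\otimes P(A,B) \to P(A,C),
\]
which I declare to be the enriched composition. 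For the identity I would use the unit measuring coalgebra $(E,\eta_A)$ with $\eta_A(e)=1_A$; terminality of $P(A,A)$ supplies a unique coalgebra map $j_A: E \to P(A,A)$, serving as the enriched identity.

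The coherence axioms then follow formally from uniqueness. For associativity, both legs of the associativity diagram are coalgebra maps $P(C,D)\otimes P(B,C)\otimes P(A,B) \to P(A,D)$; composing either with the terminal measuring map $\pi:P(A,D)\to\Hom(A,D)$ recovers the same threefold composite measuring map into $\Hom(A,D)$, because $\odot$ is on the nose the tensor product of vector spaces and composition of linear maps is associative. Hence the two legs agree by the uniqueness clause of the universal property of $P(A,D)$; the monoidal associator coherence is inherited from vector spaces exactly as noted after Proposition \ref{prop:bicat}. The left and right unit axioms are handled identically, using that $\eta_A(e)=1_A$ is a two-sided unit for composition in $\Hom$. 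I do not expect a genuine obstacle here: the real content is the terminality already established, and every required equation of coalgebra maps is forced by uniqueness of the factorisation through the terminal object.

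The argument for $\underline{Coalg}$ runs in exact parallel, once I check that measuring coalgebras for coalgebras compose. If $L$ measures $H$ to $K$ and $M$ measures $K$ to $N$, so that $L\otimes H\to K$ and $M\otimes K\to N$ are coalgebra maps, then the composite $M\otimes L\otimes H \to M\otimes K \to N$ is again a coalgebra map, and hence $M\otimes L$ measures $H$ to $N$. Terminality of $P(H,N)$, noted after Proposition \ref{prop:natequiv.co}, then yields composition morphisms $P(K,N)\otimes P(H,K)\to P(H,N)$ exactly as before, with identities coming from the measuring coalgebra $E\to\Hom(H,H)$ sending $e$ to $1_H$, and the coherence diagrams again forced by uniqueness. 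The only point requiring care is this composability of measurings in the coalgebra setting, but it reduces to the stability of coalgebra maps under tensoring and composition.
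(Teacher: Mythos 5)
Your proposal is correct and follows essentially the same route as the paper, whose own proof is just the compressed statement that the universal property of $P(-,-)$ supplies composition while uniqueness of the universal map forces associativity and the unit axioms. Your write-up simply makes explicit the details the paper leaves implicit (the measuring structure on $P(B,C)\otimes P(A,B)$ via the horizontal composition, the identity $E \to P(A,A)$, and the composability of measurings in the coalgebra case), all in the spirit the authors intended.
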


\begin{proof}
The universal property of the universal measuring coalgebra guarantees the composition of morphisms.  The uniqueness of the universal measuring map guarantees associativity of composition and also guarantees that the identity morphism satisfies its requirements.
\end{proof}

Proposition \ref{prop:natequiv} extends to the enriched setting as follows.

\begin{theorem} \label{thm:natequiven} There are natural equivalences
\[
\underline{Alg}(A\triangleleft -, B) \cong \underline{Alg}(A,[-,B]) \cong \underline{Coalg}(-, \underline{Alg}(A,B))
\]
\end{theorem}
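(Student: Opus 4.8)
\subsection*{Proof proposal}

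The plan is to prove the two equivalences by reducing them, via the Yoneda lemma for coalgebra-valued functors, to the set-level equivalences already established in Section \ref{sec:univ}. Each of the three expressions is a coalgebra depending functorially on the coalgebra argument $H$, so it suffices to show that applying $Coalg(L,-)$ for an arbitrary test coalgebra $L$ yields three naturally isomorphic $Set$-valued functors, with the isomorphisms natural in $L$ as well as in $H$; an isomorphism of the representing functors then descends to a natural isomorphism of the represented coalgebras.

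First I would unwind each term using the cited results. For the left-hand term, Proposition \ref{prop:natequiv} applied to the algebra $A\triangleleft H$, followed by the adjunction of Theorem \ref{thm:actions}, gives
\[
Coalg(L, P(A\triangleleft H, B)) \cong Alg(A\triangleleft H, [L,B]) \cong Alg(A, [H,[L,B]]).
\]
For the middle term, Proposition \ref{prop:natequiv} with the convolution algebra $[H,B]$ in the role of the target gives
\[
Coalg(L, P(A, [H,B])) \cong Alg(A, [L,[H,B]]).
\]
For the right-hand term, Proposition \ref{prop:natequiv.co} followed again by Proposition \ref{prop:natequiv} gives
\[
Coalg(L, P(H, P(A,B))) \cong Coalg(L\otimes H, P(A,B)) \cong Alg(A, [L\otimes H, B]).
\]

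The three computations leave me with $Alg(A, [H,[L,B]])$, $Alg(A,[L,[H,B]])$ and $Alg(A, [L\otimes H, B])$, so the crux is a single lemma: that the convolution algebras
\[
[H,[L,B]] \cong [L\otimes H, B] \cong [L,[H,B]]
\]
are isomorphic \emph{as algebras}, naturally in all arguments. At the level of vector spaces these are the standard currying isomorphisms for $\Hom_k$, so the content is that they respect the convolution products. I would verify this by direct computation: equipping $L\otimes H$ with the tensor-product coalgebra structure $\triangle(l\otimes h) = \sum (l_{(2)}\otimes h_{(2)})\otimes(l_{(1)}\otimes h_{(1)})$ and unpacking the convolution on $[L\otimes H,B]$, one finds $(\phi*\psi)(l\otimes h) = \sum \phi(l_{(2)}\otimes h_{(2)})\,\psi(l_{(1)}\otimes h_{(1)})$; the currying $\Hom_k(L\otimes H,B)\cong \Hom_k(L,\Hom_k(H,B))$ then carries this to the iterated convolution on $[L,[H,B]]$, and the symmetric currying handles $[H,[L,B]]$. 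Feeding this identification back through the chain above closes both equivalences, and naturality in $L$ and in $H$ descends, as throughout the paper, from the corresponding naturality of the underlying vector-space isomorphisms.

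The step I expect to be the main obstacle is precisely the convolution-product compatibility in this key lemma, where the paper's reversed Sweedler convention $\triangle h = \sum h_{(2)}\otimes h_{(1)}$ must be tracked carefully so that the $(2)$- and $(1)$-components of both tensor factors pair up consistently; an ordering slip here would make the currying maps fail to be algebra homomorphisms rather than mere linear isomorphisms. A secondary, more bookkeeping obstacle is confirming the coherence of the whole web — that the triangle of equivalences commutes and that every representing isomorphism is simultaneously natural in $L$ and in $H$ — but this follows formally once the underlying vector-space identifications and the single algebra lemma are in place.
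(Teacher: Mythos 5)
Your proposal is correct and takes essentially the same route as the paper's own proof: a Yoneda reduction against an arbitrary test coalgebra, followed by the identical chain of equivalences through Proposition \ref{prop:natequiv}, Theorem \ref{thm:actions} and Proposition \ref{prop:natequiv.co}. The only difference is one of emphasis: you isolate as an explicit lemma the fact that the currying isomorphisms $[H,[L,B]] \cong [L\otimes H,B] \cong [L,[H,B]]$ respect the convolution products, a point the paper passes over as "familiar rearrangements", so your write-up is if anything slightly more complete on that step.
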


\begin{proof}

Let $L$ be a coalgebra. The aim is to establish the existence of natural isomorphisms
\[
P(A\triangleleft L, B) \cong P(A, [L,B]) \cong P(L, P(A,B))
\]

By Yoneda's lemma, it is sufficient to show that for any coalgebra $C$ there are natural isomorphisms of sets
\[
Coalg(C,P(A\triangleleft L, B)) \cong Coalg(C,P(A, [L,B])) 
\]
\[
Coalg(C,P(A, [L,B]))  \cong Coalg(C, P(L,P(A,B))) 
\]

The work has either already been done or is familiar from vector spaces.  For the first equivalence:
\begin{eqnarray*}
Coalg(C, P(A\triangleleft L, B)) &  \cong ^1 & Alg (A\triangleleft L, [C,B]) \\
				&\cong^2  & Alg(A, [L,[C,B]]) \\
				& \cong ^3 & Alg(A, [L\otimes C, B]) \\
				& \cong ^4& Alg( A, [C\otimes L, B]) \\
				& \cong^5 & Alg(A, [C,[L,B]]) \\
				& \cong^6 & Coalg (C, P(A, [L,B]).			
\end{eqnarray*}
Here, equivalence 1 is the consequence of theorem \ref{prop:meas} and the universal property of universal measuring coalgebras. Equivalence 2 is the adjunction of theorem \ref{thm:actions}. The final equivalence again follows from proposition \ref{prop:meas} and the property of the universal measuring coalgebra.
For the second:
\begin{eqnarray*}
Coalg(C, P(L, P(A,B))) & \cong^7 & Coalg(C \otimes L, P(A,B)) \\
				& \cong^8 & Alg(A, [C \otimes L, B])\\
				& \cong ^9& Alg (A, [C,[L,B]]) \\
				& \cong^{10} & Coalg( C, P(A, [L,B])).
\end{eqnarray*}
Here, equivalence 7 is proposition \ref{prop:natequiv.co}, the universal property for the universal measuring coalgebra for coalgebras. Equivalence 8 makes use of proposition \ref{prop:natequiv}, as does 10.
\end{proof}

\subsection{ The universal measuring algebra $F(A,B) := A\triangleleft B^\circ $ and the calculation of $P(A,B)$ \label{sec:F(A,B)}}

The motivation behind our development of the Sweedler algebra was the observation that it provided a pre-dual which enabled explicit computation of $P(A,B)$ in the case where $B$ is finite dimensional. The fundamental observation is the following specific case of theorem \ref{thm:natequiven}

\begin{corollary}\label{cor:F(A,B)*}
$P(A,[B^\circ, k]) \cong P(A\triangleleft B^\circ, k) \cong (A\triangleleft B^\circ)^\circ.$

In the case where $B$ is finite dimensional, $P(A,B) \cong P(A\triangleleft B^\circ, k).$

\end{corollary}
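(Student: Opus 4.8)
The plan is to hang the statement on the first displayed chain of the corollary, which is a direct instantiation of results already established, and then to reduce the finite dimensional case to a single duality computation. For the first chain I would apply Theorem~\ref{thm:natequiven} with coalgebra argument $L = B^\circ$ and with the second algebra taken to be $k$, yielding $P(A \triangleleft B^\circ, k) \cong P(A, [B^\circ, k])$; then Corollary~\ref{cor:duco}, applied to the algebra $S = A \triangleleft B^\circ$, identifies $P(A \triangleleft B^\circ, k)$ with $(A \triangleleft B^\circ)^\circ$. With these in hand, the final statement follows once I show that, for finite dimensional $B$, the convolution algebra $[B^\circ, k]$ is isomorphic to $B$ itself.

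The heart of the argument is therefore the identification $[B^\circ, k] \cong B$ as algebras. First I would observe that when $B$ is finite dimensional every ideal is cofinite, so the dual coalgebra coincides with the full linear dual, $B^\circ = B^*$; consequently $[B^\circ, k] = \Hom_k(B^*, k) = (B^*)^*$, equipped with the convolution product arising from the comultiplication on $B^*$. That comultiplication is dual to the multiplication on $B$, so $\sum_{(f)} f_{(2)}(b)\, f_{(1)}(b') = f(bb')$ for $f \in B^*$ and $b, b' \in B$, and its counit is $f \mapsto f(1_B)$. I would then take the canonical evaluation map $B \to B^{**}$, $b \mapsto \mathrm{ev}_b$, and compute the convolution product of two evaluations, obtaining $(\mathrm{ev}_b * \mathrm{ev}_{b'})(f) = \sum_{(f)} f_{(2)}(b)\, f_{(1)}(b') = f(bb') = \mathrm{ev}_{bb'}(f)$, with the unit of $(B^*)^*$ matching $\mathrm{ev}_{1_B}$ via the counit check. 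Since $B$ is finite dimensional the evaluation map is a linear isomorphism, so this exhibits $B \cong (B^*)^* = [B^\circ, k]$ as an isomorphism of algebras.

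To finish, I would use that the universal measuring coalgebra $P(A,-)$ is functorial in its second argument—immediate from the universal property of Definition~\ref{def:univ}, since an algebra isomorphism induces a bijection on measuring coalgebras and hence on terminal objects. Thus the algebra isomorphism $[B^\circ, k] \cong B$ yields a coalgebra isomorphism $P(A, [B^\circ, k]) \cong P(A, B)$. Chaining this with the first isomorphism above gives $P(A, B) \cong P(A, [B^\circ, k]) \cong P(A \triangleleft B^\circ, k)$, as required.

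I expect the main obstacle to be precisely the algebra identification $[B^\circ, k] \cong B$: one must check carefully that the convolution product on the double dual $(B^*)^*$ genuinely recovers the original multiplication on $B$ under evaluation, which hinges on the comultiplication of $B^*$ being exactly dual to the multiplication of $B$ and on matching the unit of $B$ with the counit of $B^*$. Everything else is an application of results already in place, and finite dimensionality is used only here—in replacing $B^\circ$ by $B^*$ and in invoking the reflexivity $B^{**} \cong B$.
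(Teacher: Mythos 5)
Your proof is correct and follows exactly the route the paper intends: the corollary is stated there as a direct specialisation of Theorem~\ref{thm:natequiven} (with $L = B^\circ$, target algebra $k$) combined with Corollary~\ref{cor:duco}, which is precisely your first paragraph. Your careful verification that $[B^\circ,k]\cong B$ as algebras for finite dimensional $B$, and the use of functoriality of $P(A,-)$ to transport this isomorphism, merely fills in details the paper leaves implicit, so the two arguments coincide in substance.
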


\begin{definition} \label{def:F(A,B)} Define the \emph{universal measuring algebra} $F(A,B)$ to be the Sweedler product

\[
F(A,B):= A\triangleleft B^\circ .
\] 

\end{definition}

The name is chosen to suggest its role linking the algebra structures of $A$ and $B$. The properties of $F(A,B)$ are summarised in the following theorem.

\begin{theorem} \label{thm:F(A,B)} The properties of $F(A,B)$ are as follows.
\begin{enumerate}
	\item For any finite dimensional algebra $B$ and $A$ any algebra there is an algebra homomorphism
	\[
	\eta(A,B): A \to F(A,B)\otimes B 
	\] 
	such that for any extension $\sigma: A \to S\otimes B$ there is a unique homomorphism $F(\sigma):F(A,B) \to S$ such that the diagram 
\[	
\xymatrixcolsep{5pc}
\xymatrix{
A\ar[rd]_\sigma \ar[r]^{\eta(A,B)} & F(A,B)\otimes B \ar[d]^{F(\sigma)\otimes 1_B}\\
&S\otimes B
}
\]
commutes.  Moreover, $\eta(-,B)$ is a natural transformation.

	\item For arbitrary B the algebra homomorphism $F(\sigma):F(A,B) \to S$ still exists: the assignment
	\[
	F: \widehat{Alg}(A,B) \to Alg(F(A,B), -), \ \ \ F(S,\sigma) = F(\sigma):F(A,B) \to S
	\]
	is a functor.
	\item If $B$ is finite dimensional, $F(A,B)$ is an initial object in $\widehat{Alg}(A,B)$.
	\item For $A$ finite dimensional, $F(A,A)$ is a bialgebra. More generally for finite dimensional algebras $A, \ B, \ C$ there is a factorisation
	\[
	\triangle_B:F(A,C) \to F(A,B)\otimes F(B,C) 
	\]
	which satisfies a coassociativity identity:
	\[
	(1_{F(AB)}\otimes \triangle_C )\circ \triangle_B = (\triangle_B \otimes 1_{F(C,D)} )\circ \triangle_C: F(A,D) \to F(A,B)\otimes F(B,C)\otimes F(C,D).
	\]
	\item The evaluation map
	\[
	A \to [P(A,B), B]
	\]
	is an algebra homomorphism.
	\item $P(A,B) \cong F(A,B)^\circ$
\end{enumerate}

\end{theorem}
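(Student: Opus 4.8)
The plan is to derive every clause from two facts already in hand: the tensor--hom adjunction $Alg(A,[H,B]) \cong Alg(A\triangleleft H, B)$ of Theorem \ref{thm:actions}, and the identification $S^\circ = P(S,k)$ of Corollary \ref{cor:duco}, together with the chain of equivalences in Corollary \ref{cor:F(A,B)*}. For clauses (1) and (3) I would obtain $\eta(A,B)$ as the unit of that adjunction: instantiating Theorem \ref{thm:actions} at $S = F(A,B) = A\triangleleft B^\circ$, the identity homomorphism $1_{F(A,B)}$ corresponds to an algebra homomorphism $A \to [B^\circ, F(A,B)]$, and when $B$ is finite dimensional $B^\circ = B^*$ forces $[B^\circ, F(A,B)] = \Hom_k(B^*, F(A,B)) \cong F(A,B)\otimes B$, which is exactly $\eta(A,B)$. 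Given any extension $\sigma: A \to S\otimes B \cong [B^\circ, S]$, the adjunction produces a unique homomorphism $F(\sigma): A\triangleleft B^\circ \to S$, the triangle in (1) is the triangle identity of the adjunction, and naturality of $\eta(-,B)$ is read off from the functoriality of $\triangleleft$ established in Theorem \ref{thm:actions}. Clause (3) is then just the restatement that $(F(A,B),\eta(A,B))$ is initial in $\widehat{Alg}(A,B)$.

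Clause (2) removes the finiteness hypothesis by replacing the isomorphism $S\otimes B \cong [B^\circ,S]$ with the canonical algebra map $S\otimes B \to [B^\circ, S] = \Hom_k(B^\circ, S)$ sending $s\otimes b$ to $\beta \mapsto \langle \beta, b\rangle\, s$; one checks this is a convolution-algebra homomorphism using that the comultiplication of $B^\circ$ is dual to the multiplication of $B$. Composing $\sigma$ with it gives a homomorphism $A \to [B^\circ, S]$, and the adjunction again yields $F(\sigma)$, with functoriality of $F$ following from naturality of the adjunction. Clause (5) is immediate: the universal measuring map $\pi: P(A,B) \to \Hom_k(A,B)$ measures by definition, and Proposition \ref{prop:meas} says precisely that its adjoint $A \to [P(A,B),B]$ is an algebra homomorphism. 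For clause (6) I would compute $F(A,B)^\circ = (A\triangleleft B^\circ)^\circ = P(A\triangleleft B^\circ, k)$ by Corollary \ref{cor:duco}, then invoke Corollary \ref{cor:F(A,B)*} to identify this with $P(A,B)$; in the finite-dimensional case the identification $[B^\circ,k] = (B^*)^* \cong B$ makes the last step transparent, while the general case needs the observation that measurings into $\Hom_k(A,B)$ see $B$ only through its dual coalgebra.

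The real work is clause (4). Here I would build the comultiplication from composition of universal extensions in the bicategory $\widehat{Alg}$: the universal extensions $A \to F(A,B)\otimes B$ and $B \to F(B,C)\otimes C$ compose under $\odot$ to an extension $A \to (F(A,B)\otimes F(B,C))\otimes C$, and initiality of $F(A,C)$ supplies the unique homomorphism $\triangle_B: F(A,C) \to F(A,B)\otimes F(B,C)$. Taking $A=B=C$ gives the comultiplication of $F(A,A)$, and applying $F$ to the unit extension $A \to k\otimes A$ gives the counit; the bialgebra axioms then amount to the unit and associativity laws of $\odot$ transported through $F$.

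The hard part will be coassociativity. The two ways of decomposing the universal extension $A \to F(A,B)\otimes F(B,C)\otimes F(C,D)\otimes D$ must be shown to agree, and this reduces to the associativity (and unit) coherence of horizontal composition in $\widehat{Alg}$ from Proposition \ref{prop:bicat}, pushed through the uniqueness clause of the initial-object property. The delicacy is purely the bookkeeping of the coherence isomorphisms $\cong$ (which, as noted after Proposition \ref{prop:bicat}, are just the associators of the vector-space tensor product): one must verify that initiality is being applied to genuinely equal composite extensions, so that its uniqueness assertion forces the two iterated comultiplications to coincide. I expect this coherence verification, rather than any individual clause, to be the main obstacle, since all the other clauses are essentially formal consequences of adjunctions and universal properties already established.
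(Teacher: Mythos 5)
Your proposal is correct and follows essentially the same route as the paper: parts (2), (3), (5), (6) are argued identically (the evaluation map $S\otimes B\to[B^\circ,S]$ followed by the adjunction of Theorem \ref{thm:actions}, plus citations of Proposition \ref{prop:meas} and Corollary \ref{cor:F(A,B)*}), and part (4) is obtained exactly as in the paper by composing the universal extensions $A\to F(A,B)\otimes B$ and $B\to F(B,C)\otimes C$ and invoking uniqueness from the universal property, your counit $F(A\to k\otimes A)$ agreeing with the paper's contraction $A\otimes A^\circ\to k$. The only difference is cosmetic: you obtain $\eta(A,B)$ abstractly as the unit of the adjunction, whereas the paper constructs the same map explicitly as the image of $a\otimes e$ for the canonical element $e\in B^\circ\otimes B$ and verifies the homomorphism property by a diagram chase --- the underlying content coincides, since your identification $[B^\circ,S]\cong S\otimes B$ for finite-dimensional $B$ rests on the same duality between multiplication in $B$ and comultiplication in $B^\circ$ that drives the paper's diagram argument.
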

\begin{proof}
\begin{enumerate}
\item Define $\eta(A,B):A \to F(A,B)\otimes B$ using the identity element $e$ in $B^\circ \otimes B$ so that $\eta_A(a)$  is the image of $a \otimes e$ in $A\otimes B^\circ \otimes B \subset T(A \otimes B^\circ) \otimes B $ in the quotient $F(A,B)\otimes B$. 

Check that $\eta(A,B): A \to F(A,B)\otimes B$ is an algebra homomorphism. 
Consider the diagram:
\[
\xymatrixcolsep{2pc}
\xymatrix{
A\otimes A\ar[r] \ar[d] & A\otimes A \otimes B^\circ \otimes B \otimes B^\circ \otimes B \ar[r] & A \otimes B^\circ \otimes B \otimes A \otimes B^\circ \otimes B \ar[d]\\
A\otimes A \otimes B^\circ \otimes B \ar[d] \ar[r] & A\otimes A\otimes B^\circ \otimes B^\circ \otimes B \ar[r] & (A\otimes B^\circ ) \otimes (A\otimes B^\circ) \otimes B \ar[d]\\
 A \otimes B^\circ \otimes B \ar[r]& F(A,B) \otimes B & F(A,B) \otimes B \otimes F(A,B)\otimes B \ar[l]
}
\]
Going anticlockwise from top left, the diagram represents the product of elements in $A$, followed by their inclusion into $F(A, B) \otimes B$. Going clockwise, the morphisms map $A$ to $F(A, B) \otimes B$ by $\eta$ first, and then multiply elements (first multiplying elements in $B$, then in $F(A, B)$). Thus the perimeter arrows are the required identity to verify that $\eta(A,B): A \to F(A,B) \otimes B$ is an algebra homomorphism.

To see that this diagram commutes; the bottom square is just the diagram defining the quotient ideal of $F(A, B)$, tensored with $B$. For the top square, consider the diagram relating multiplication in $B$ with comultiplication in $B^\circ$: 
\[
\xymatrixcolsep{5pc}
\xymatrix{
B\otimes B \otimes B^\circ \ar[r] \ar[d] & B\otimes B\otimes B^\circ \otimes B^\circ \ar[r] & B\otimes B^\circ \otimes B \otimes B^\circ \ar[d] \\
B \otimes B^\circ \ar[r] & k & k \otimes k \ar[l]
}
\]
Since $B$ is finite dimensional the dual algebra of $B^\circ $ is just $B$, this diagram dualises with reversed arrows; tensored with $A\otimes A$ this is essentially the top square above - other than some shuffling, the factors of $A$ are otherwise untouched. 

The statement that $\eta(-,B) $ is natural means that If $A'$ is another algebra with a homomorphism $\tau:A \to A'$ the diagram
\[	
\xymatrixcolsep{5pc}
\xymatrix{
A\ar[d]_\tau \ar[r]^{\eta(A,B)} & F(A,B)\otimes B \ar[d]^{F(\tau, 1_B)}\\
A' \ar[r]^{\eta(A'B)} & F(A',B)\otimes B
}
\]
commutes. (A more correct statement would be that $\eta: \mathbb{Id} \to F(-,B)\otimes B$ is a natural transformation.) The proof is a matter of checking that the map
\[
\tau \otimes e: A \otimes B^\circ \otimes B  \to  A' \otimes B^\circ \otimes B 
\]
induces a homomorphism 
\[
\tau \otimes e:T( A \otimes B^\circ )\otimes B  \to  T(A' \otimes B^\circ) \otimes B 
\]
that respects the ideal defining $F(A,B)$.
The dependence on the second variable, $\eta(A, -)$ is not so simple.

\item A homomorphism $\sigma : A \to S \otimes B$ can be regarded as an algebra homomorphism $\sigma: A \to [B^\circ, S]$ by mapping $s \otimes b \mapsto s\otimes \psi_b$; $\psi_b$ denoting evaluation at $b$.
The equivalence in \ref{thm:actions} $Alg(A, [B^\circ, S]) \cong Alg (A\triangleleft B^\circ,S)$  provides the required map $A\triangleleft B^\circ = F(A,B) \to S$.

The category $Alg(F(A,B),-)$ is the category with pairs $(\rho, T)$ where $T$ is an algebra and $\rho:F(A,B) \to T$ is an algebra homomorphism. Morphisms $\tau: (\rho, T) \to  (\rho', T')$ are homomorphisms $\tau: T \to T'$ such that $\rho' = \tau \circ \rho$. Functoriality follows from the functoriality of $\triangleleft$, theorem \ref{thm:actions}.

\item From part 1 above, $\eta(A,B) : A \to F(A,B) \otimes B$ is an extension. Check that the map $\sigma$ of part 2 above is a morphism of extensions. 

\item This follows from uniqueness of $\eta$ For algebras $A, \ B, C$ consider the composition
\[
\xymatrixcolsep{5pc}
\xymatrix{
A\ar[r]^{\eta(A,B)} & F(A,B) \otimes B \ar[r]^{1\otimes \eta(B,C)} & F(A,B) \otimes F(B,C) \otimes C.
}
\]
This provides a factorisation $\triangle_B :F(A,C) \to F(A,B) \otimes F(B,C)$.  The coassociativity follows from the universal property of $\eta$. In the case $A=B=C$, this provides F(A,\ A) with a coassociative comultiplication. $F(A, \ A)$ has a counit determined on generators by the contraction $A\otimes A^\circ \to k$.
\item This is \ref{prop:meas}.
\item This is a restatement of proposition \ref{cor:F(A,B)*}.
\end{enumerate}
\end{proof}

Some elementary examples of $F(A,B)$ are collected in the following proposition.
\begin{proposition}
\begin{enumerate}
	\item $F(A,k) = A$
	\item $F(k,A) = k$
\end{enumerate}
\end{proposition}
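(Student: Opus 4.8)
The plan is to derive both identities from the tensor--hom adjunction of Theorem \ref{thm:actions}, namely $Alg(A \triangleleft H, B) \cong Alg(A, [H,B])$, together with the Yoneda lemma; in each case the work reduces to identifying the relevant convolution algebra $[H,B]$.

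For (1), I would first note that since $k$ is one-dimensional its dual coalgebra is $k^\circ = k^* = k$, carrying the grouplike structure $\triangle 1 = 1 \otimes 1$, $\epsilon(1) = 1$. Next I would identify the convolution algebra $[k^\circ, B] = \Hom_k(k, B)$ with $B$ as an algebra: evaluation at the grouplike element gives a linear isomorphism $\Hom_k(k,B) \cong B$, and because $\triangle 1 = 1 \otimes 1$ the convolution product $\alpha * \beta$ collapses to the single term $\alpha(1)\beta(1)$, so the isomorphism is multiplicative, while the convolution unit $h \mapsto \epsilon(h)1_B$ corresponds to $1_B$. Hence $[k^\circ, B] \cong B$ naturally in $B$. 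Feeding this into the adjunction gives $Alg(A \triangleleft k^\circ, B) \cong Alg(A, [k^\circ,B]) \cong Alg(A,B)$ naturally in $B$, and Yoneda yields $F(A,k) = A \triangleleft k^\circ \cong A$.

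For (2), the key observation is that $k$ is the initial object of $Alg$: for every algebra $C$ the set $Alg(k, C)$ is a singleton, since a unital algebra homomorphism out of $k$ is forced to be the unit map $\lambda \mapsto \lambda 1_C$. Applying this with $C = [A^\circ, B]$, the adjunction shows that $Alg(F(k,A), B) = Alg(k \triangleleft A^\circ, B) \cong Alg(k, [A^\circ,B])$ is a singleton for every $B$; that is, $Alg(k \triangleleft A^\circ, -) \cong Alg(k, -)$ naturally, so $F(k,A) = k \triangleleft A^\circ$ is itself initial and hence isomorphic to $k$.

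There is no deep obstacle here; the only points needing care are bookkeeping ones. In (1) I must check that both the product \emph{and} the unit transport correctly under $\Hom_k(k,B) \cong B$, and that the identification is natural in $B$ so that Yoneda applies; in (2) I must confirm that the set $Alg(k,[A^\circ,B])$ is genuinely a singleton, in particular that $k \triangleleft A^\circ$ is not the zero ring, which follows because the adjunction already supplies a homomorphism $k \triangleleft A^\circ \to k$ sending the unit to $1 \neq 0$. As a sanity check one can read both statements directly off the defining relations of Definition \ref{def: square}: for (1) the relations $(aa') \triangleleft 1 = (a \triangleleft 1)(a' \triangleleft 1)$ and $1_A \triangleleft 1 = 1$ say exactly that $a \mapsto a \triangleleft 1$ is a surjective algebra map $A \to A \triangleleft k^\circ$ with two-sided inverse the algebra map $A \triangleleft k^\circ \to A$ induced by $\mathrm{id}_A$, hence an isomorphism; for (2) linearity in the first variable together with $1_k \triangleleft c = \epsilon(c)1$ forces every generator $\lambda \triangleleft c$ to equal $\lambda\,\epsilon(c)\,1$, so the algebra is spanned by its (nonzero) unit and equals $k$.
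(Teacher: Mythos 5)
Your proof is correct. The paper states this proposition without any proof at all, so there is nothing to compare line by line; your route --- identifying $[k^\circ,B]\cong B$ (resp.\ invoking initiality of $k$) and then combining the adjunction of Theorem \ref{thm:actions} with Yoneda's lemma --- is exactly the computational strategy the paper itself uses elsewhere (see the proof of Proposition \ref{prop:Qcalc}), and your generators-and-relations check against Definition \ref{def: square} is a sound independent verification.
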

\emph{Remark} Part 2 of \ref{thm:F(A,B)} justifies the naming of this construction as the universal measuring algebra. For $A = B$  it seems to play a role reminiscent of universal enveloping algebras, providing an object that maps to all representations of $A$.

\begin{corollary}
If $\theta:A \to \End(W)$ is a representation of $A$, $F(\theta)$ is an algebra homomorphism $F(\theta):F(A,A) \to End(W)$ 
\end{corollary}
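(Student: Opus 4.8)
The plan is to recognise the representation $\theta$ as an object of $\widehat{Alg}(A,A)$ and then apply part 2 of Theorem \ref{thm:F(A,B)}, which produces, for every extension, an algebra homomorphism out of $F(A,B)$. Concretely, I would first promote $\theta$ to an extension $\sigma \colon A \to \End(W)\otimes A$ by composing with the unital inclusion $\End(W) \hookrightarrow \End(W)\otimes A$, $s \mapsto s \otimes 1_A$; that is, set $\sigma(a) = \theta(a)\otimes 1_A$. Since $\theta$ is an algebra homomorphism and $s \mapsto s\otimes 1_A$ is a unital algebra map, $\sigma$ is an algebra homomorphism, hence an object of $\widehat{Alg}(A,A)$ with $S = \End(W)$ and $B = A$.

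Having done this, part 2 of Theorem \ref{thm:F(A,B)} applies with $S = \End(W)$ and $B = A$: the functor $F$ sends the extension $(\End(W),\sigma)$ to an algebra homomorphism $F(\sigma)\colon F(A,A) \to \End(W)$, and I would define $F(\theta) := F(\sigma)$. No finiteness hypothesis on $A$ or $W$ is needed, since part 2 produces $F(\sigma)$ for arbitrary second variable and arbitrary target via the adjunction of Theorem \ref{thm:actions}, rather than through the unit $\eta(A,A)$ of part 1 (which would require $A$ finite dimensional). When $A$ happens to be finite dimensional, $F(\theta)$ is moreover characterised by $(F(\theta)\otimes 1_A)\circ\eta(A,A) = \sigma$, exhibiting it as the genuine extension of $\theta$ along the universal $\eta(A,A)$, exactly the \emph{universal enveloping} behaviour flagged in the preceding remark.

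To make the map explicit, and as a cross-check, I would trace it through the adjunction $Alg(A\triangleleft A^\circ, \End(W)) \cong Alg(A, [A^\circ, \End(W)])$ of Theorem \ref{thm:actions}: specifying $F(\theta)$ amounts to giving an algebra homomorphism $A \to [A^\circ, \End(W)]$, and the natural candidate is $\theta$ followed by the algebra map $\End(W) \to [A^\circ, \End(W)]$ induced by the counit of $A^\circ$ (evaluation at $1_A$), namely $s \mapsto (f \mapsto f(1_A)\,s)$. A short convolution computation confirms this is multiplicative and unital, and it agrees with the reading of $\sigma$ as an element of $[A^\circ, \End(W)]$ used in the proof of part 2. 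I do not anticipate a genuine obstacle: the substance is entirely the functoriality already established in Theorems \ref{thm:actions} and \ref{thm:F(A,B)}. The single point deserving care is the verification that the lift $s \mapsto s\otimes 1_A$ (equivalently the counit map into the convolution algebra) is unital and multiplicative, so that $\sigma$ is a bona fide object of $\widehat{Alg}(A,A)$ and not merely a linear map.
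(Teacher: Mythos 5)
Your proposal is correct and follows exactly the paper's own route: the paper likewise regards $\theta$ as the extension $a \mapsto \theta(a)\otimes 1_A$ in $\widehat{Alg}(A,A)$ and invokes part 2 of Theorem \ref{thm:F(A,B)}. Your additional tracing through the adjunction of Theorem \ref{thm:actions} is a sound cross-check but adds nothing beyond what the paper's one-line argument already contains.
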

\begin{proof}
This is immediate from part 2 of \ref{thm:F(A,B)} above, as $\theta$ can be considered as the extension 
\[
A \to  \End(V)\otimes A, \ \ \ a\mapsto \theta(a)\otimes 1.
\]
\end{proof}

\section{Computations}
The categorical structure behind the constructions have the consequence that they are easy to work with.  They are also computable in small cases.  This section begins with the basic example that initiated the project, the computation of the universal measuring coalgebra $P(\mathbb{C}, \mathbb{C})$ as $F(\mathbb{C}, \mathbb{C})^ \circ$, where $\mathbb{C}$ is considered as an algebra over $\mathbb{R}$. 
A second set of computable examples are the algebras $F(\mathbb{Q}[x]/p(x),\mathbb{Q}[x]/p(x))$ for a polynomial $p(x)$.
Finally for the particular case of $p(x) = x^2$, $F(\mathbb{R}[x]/x^2,\mathbb{R}[x]/x^2)$ coincides with the Pareigis Hopf algebra, connecting comodules over this coalgebra with chain complexes. 

\subsection{The algebra  $F(\mathbb{C}, \mathbb{C})$} \label{sec;basicexample}
Throughout this subsection let $A = \mathbb{R} [x]/(x^2+1)$, and let $F$ be the (real) algebra $F(A,A)$.  
The generators of the ideal defining $F(A,A)$ can be got directly from the definitions, but it is easier to compute postulate an algebra $F$ together with a homomorphism $\eta: A \to F\otimes A$ with the required universal property.

Write $\eta (x) = f_0 \otimes 1 + f_1 \otimes x$. Since $\eta$ is assumed to be an algebra homomorphism, it must be the case that $\eta(x)^2 = -1$ or
\begin{align}
f_0^2 - f_1^2 & =  -1 \\
f_0f_1 +f_1 f_0 & = 0
\end{align}

Let $F$ be the algebra over $\mathbb{R}$ generated by $\{f_0, \ f_1\}$ subject to the above identities. If $S$ is another algebra with a homomorphism $\sigma: A \to S \otimes A$, write $\sigma(x)$ as $\sigma(x) = s_0 \otimes 1 + s_1 \otimes x$ with $s_i$ in $S$.. Evidently $s_0, \ s_1$ must satisfy the same identities, and hence there is an algebra homomorphism $F(\sigma): F \to S$, satisfying the universal property required of $F(\mathbb{C},\mathbb{C})$.

$F(\mathbb{C},\mathbb{C})$ can then be given explicitly in terms of a basis:
\[
F = < f_0^\epsilon f_1^k : \epsilon \in \{0,1\}, k \in \mathbb{Z}^+ > .
\]

\subsubsection{The coalgebra structure on  $F(\mathbb{C}, \mathbb{C})$}
The coalgebra structure on $F$ can be given by considering the composition 
\[
\xymatrixcolsep{5pc}
\xymatrix{
A \ar[r]^\eta &F\otimes A\ar@/^/[r]^{1\otimes \eta}\ar@/_/[r]_{\triangle \otimes 1} & F \otimes F\otimes A  
}
\]
The comultiplication $\triangle$ is defined so that both compositions are equal.
\begin{align}
(1\otimes \eta) \circ \eta(x) &= (1\otimes \eta) (f_0\otimes 1 + f_1 \otimes x) \\
	& = f_0 \otimes 1 \otimes 1 + f_0 \otimes f_0 \otimes 1 + f_1 \otimes f_1, \\
(\triangle \otimes 1)\circ \eta & = \triangle f_0 \otimes 1 + \triangle f_1 \otimes x	
\end{align}
By comparing coefficients of $A$ 
\[
\triangle f_1 = f_1 \otimes f_1, \ \ \triangle f_0 = f_0 \otimes 1 + f_1 \otimes f_0.
\]
	
\subsubsection{Finite dimensional modules of $F$ and $P(\mathbb{C},\mathbb{C})$}


The quest that generated the development of the theory contained in this paper was the quest to calculate $P(\mathbb{C},\mathbb{C})$ considering $\mathbb{C}$ as an $\mathbb{R}$ algebra.  
\begin{proposition}
Irreducible subcoalgebras of $P(\mathbb{C},\mathbb{C})$ are of the form $J_b^\perp$
where $b \in \mathbb{C}$ and $J_b$ is the ideal of $F$ generated by $f_1^2 - b^2$ in the case that $b^2$ is real, or $(f_1 - b)(f_1 -\bar{b})$ otherwise, and $J_b^\perp$ is the subcoalgebra of $F$ which vanishes on $J_b$.
\end{proposition}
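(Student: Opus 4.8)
The plan is to pass to the dual coalgebra and read off the irreducible subcoalgebras from the finite dimensional representation theory of $F$. By Corollary \ref{cor:F(A,B)*} (equivalently part 6 of Theorem \ref{thm:F(A,B)}) we have $P(\mathbb{C},\mathbb{C}) \cong F^\circ$, since $\mathbb{C}$ is finite dimensional over $\mathbb{R}$. By Definition \ref{def:dualcoalg} and the finiteness property of coalgebras (Proposition \ref{prop:Coalgebra finiteness}), every finite dimensional subcoalgebra of $F^\circ$ is the annihilator $I^\perp \cong (F/I)^*$ of a cofinite ideal $I \subset F$, and the duality dictionary of Theorem \ref{thm:dual} identifies subcoalgebras of $F^\circ$ with finite dimensional quotient algebras of $F$: the cosimple subcoalgebras correspond to the simple finite dimensional quotients, and an irreducible subcoalgebra (one with a unique simple subcoalgebra) corresponds to a quotient with a unique simple quotient algebra. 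So the task reduces to classifying the cofinite two-sided ideals $J$ for which $F/J$ is simple, and showing these are exactly the $J_b$.

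The key structural observation is that $f_1^2$ is central: from $f_0 f_1 = -f_1 f_0$ one gets $f_1^2 f_0 = f_0 f_1^2$, whence $f_0^2 = f_1^2 - 1$ is central too. Thus $F$ is free of rank two over $\mathbb{R}[f_1]$ with basis $1, f_0$, where $f_0$ implements $f_1 \mapsto -f_1$ and $f_0^2 = f_1^2 - 1$; it is a generalised quaternion order over the central subalgebra $\mathbb{R}[f_1^2]$. On any finite dimensional irreducible representation Schur's lemma forces $f_1^2$ to act by a scalar $c \in \mathbb{R}$; writing $c = b^2$, the relation $f_0 f_1 f_0^{-1} = -f_1$ (valid where $f_0$ is invertible, i.e. $c \neq 1$) shows the eigenvalues of $f_1$ occur in pairs $\pm b$, while reality forces them into conjugate pairs. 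These two constraints produce exactly the dichotomy in the statement: when $b^2 \in \mathbb{R}$ the minimal polynomial of $f_1$ is $f_1^2 - b^2$, and when $b^2 \notin \mathbb{R}$ it is the real quadratic $(f_1 - b)(f_1 - \bar b)$. In each case the two-sided ideal generated is $J_b$, and $F/J_b$ is four dimensional on the images of $1, f_0, f_1, f_0 f_1$.

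The next step is to identify $F/J_b$ and hence its dual. For generic $b$ the quadratic form with diagonal $(f_1^2, f_0^2) = (b^2, b^2 - 1)$ is non-degenerate, so $F/J_b$ is the quaternion algebra with symbol $(b^2, b^2-1)_{\mathbb{R}}$: the split algebra $M_2(\mathbb{R})$ when $b$ is real, and the division algebra $\mathbb{H}$ when $b$ is purely imaginary (both squares negative). In either case $F/J_b$ is simple, so $J_b^\perp = (F/J_b)^*$ is cosimple, hence irreducible, and the comultiplication inherited from $\triangle f_1 = f_1 \otimes f_1$, $\triangle f_0 = f_0 \otimes 1 + f_1 \otimes f_0$ is dual to this matrix multiplication. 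Conversely, since $f_1^2$ acts as a coalgebra endomorphism of $F^\circ$, its value on the coradical of an irreducible subcoalgebra singles out a single scalar $b^2$, placing that subcoalgebra inside the $J_b$-component and giving the reverse inclusion.

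The main obstacle is the degenerate fibres $b^2 \in \{0,1\}$, and, more seriously, the precise meaning of \emph{irreducible}. At $b^2 = 1$ one has $f_0^2 = 0$, and $F/J_b$ is the four dimensional algebra in which the two grouplike quotients $f_1 = \pm 1$ — the identity and the conjugation homomorphism $\mathbb{C} \to \mathbb{C}$ — are linked by the square-zero element $f_0$, so $J_{\pm 1}^\perp$ is indecomposable yet carries two simple subcoalgebras; at $b^2 = 0$ the quotient has a square-zero radical but a single simple quotient $\mathbb{C}$. One must decide whether irreducibility is intended in the strict sense or the link-indecomposable sense, and crucially accommodate the larger irreducible subcoalgebras dual to $F/J_b^{\,n}$: because $f_1^2 - b^2$ is not idempotent these ``formal neighbourhood'' quotients are genuine, and their duals are strictly larger irreducible subcoalgebras for generic $b$. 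I expect the resolution to be that the intended objects are the components of the coradical — equivalently the pieces on which $f_1^2$ acts as an honest scalar rather than a scalar-plus-nilpotent — and that verifying these coincide with the $J_b^\perp$, together with the real-form bookkeeping at the degenerate points, is where the real work lies.
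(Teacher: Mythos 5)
Your overall route is the paper's route: identify $P(\mathbb{C},\mathbb{C})$ with $F^\circ$ and read off subcoalgebras from the finite dimensional representation theory of $F$ (the paper does this by taking an eigenvector $v$ of $f_1$ with eigenvalue $b$, noting that $f_0v$ is an eigenvector with eigenvalue $-b$, and observing that $\mathrm{span}\{v,f_0v\}$ is an irreducible submodule because $f_0^2v=(f_1^2-1)v$). Your refinements --- the centrality of $f_1^2$ and $f_0^2$, and the identification of the generic quotients as the quaternion algebras $(b^2,b^2-1)_{\mathbb{R}}$, namely $M_2(\mathbb{R})$ for real $b$ and $\mathbb{H}$ for purely imaginary $b$ --- are correct and sharper than anything in the paper's proof. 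The worries in your final paragraph are also genuine defects of the statement which the paper's proof does not engage with: at $b^2=1$ the quotient $F/(f_1^2-1)$ has radical spanned by $f_0,f_0f_1$ and semisimple quotient $\mathbb{R}[f_1]/(f_1^2-1)\cong\mathbb{R}\times\mathbb{R}$, so $J_{\pm1}^\perp$ contains two distinct grouplikes and is \emph{not} irreducible; and the duals of $F/J_b^{\,n}$ are indeed strictly larger irreducible subcoalgebras, so the $J_b^\perp$ cannot literally exhaust the irreducible subcoalgebras.

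There is, however, a genuine error in your treatment of the case $b^2\notin\mathbb{R}$, and it is one your own set-up should have caught. You assert that the minimal polynomial of $f_1$ is then the quadratic $(f_1-b)(f_1-\bar b)$ and that $F/J_b$ is four dimensional. But your two constraints act \emph{simultaneously}: conjugation by $f_0$ pairs $b$ with $-b$, and reality pairs $b$ with $\bar b$, so for $b^2\notin\mathbb{R}$ the spectrum consists of the four distinct values $\pm b,\pm\bar b$, the real minimal polynomial is the quartic $(f_1^2-b^2)(f_1^2-\bar b^{\,2})$, and the corresponding simple quotient is $M_2(\mathbb{C})$, of real dimension $8$, not $4$. (Your appeal to Schur's lemma to make $f_1^2$ act by a \emph{real} scalar fails precisely here: the commutant of a real irreducible representation may be $\mathbb{C}$ or $\mathbb{H}$, and in this case $f_1^2$ acts by the non-real scalar $b^2$.) Worse, the two-sided ideal of $F$ generated by $p(f_1)=(f_1-b)(f_1-\bar b)$ is the unit ideal: it contains $f_0\,p(f_1)\,f_0=p(-f_1)(f_1^2-1)$, and when $\mathrm{Re}(b)\neq0\neq\mathrm{Im}(b)$ the real polynomials $p(f_1)$ and $p(-f_1)(f_1^2-1)$ have disjoint root sets, so B\'ezout in $\mathbb{R}[f_1]$ puts $1$ in the ideal; hence $J_b=F$ and $J_b^\perp=0$. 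So your sentence ``in each case the two-sided ideal generated is $J_b$, and $F/J_b$ is four dimensional'' is false. This defect is inherited from the paper itself, whose proof says only that $(f_1-b)(f_1-\bar b)$ ``is in $F$ and generates an ideal''; the correct statement takes $J_b$ to be the kernel of $F\to\mathrm{End}_{\mathbb{C}}(V_b)$, i.e.\ the ideal generated by $(f_1^2-b^2)(f_1^2-\bar b^{\,2})$, whose quotient is the simple algebra $M_2(\mathbb{C})$. A complete proof has to make this correction (and settle the degenerate fibres) rather than reproduce the claim as stated.
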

\begin{proof}
To calculate $P(\mathbb{C}, \mathbb{C}) = F^\circ$, the strategy is to explore finite dimensional complex representations $\theta:F\to \End(V)$.  The real dual coalgebra  $F^\circ$ is then the union of the real dual coalgebras $\theta^*(\End(V)^*)$. 

Suppose $V$ is such a finite dimensional complex representation of $F$. Let $V_b$ be an $f_1$ eigenspace with eigenvalue $b$, and suppose that $v \in V_b$ is an eigenvector.  Then $f_0v$ is also an eigenvector of $f_1$ with eigenvalue $-b$. The subspace spanned by $\{v, f_0v\}$ is then an irreducible $F$ submodule, since $f_0^2v = (f_1^2 -1)v$.

Thus finite dimensional irreducible modules of $\mathbb{C}F$ are indexed by pairs of  complex numbers $ \{b, a\}$ with $a^2 - b^2 + 1 = 0$. The image of $F$ is isomorphic to $M_2(\mathbb{C})$. The kernel $J_b$ of $\mathbb{C}F \to \End (V) $ is generated by $(f_1^2 - b^2)$. If $b^2$ is real, this generates a maximal ideal of $F$. If $b^2$ is not real, $(f_1 -b)(f_1- \bar{b})$ is in $F$ and generates an ideal. 
\end{proof}

\subsection{Representations of $F$ in $M_2(\mathbb{R}(\lambda))$} 
\label{sec:Freps}
The algebra $F$ can be represented as a subalgebra of  $M_2(\mathbb{R}(\lambda))$ by assigning
\[
f_1 \to \begin{bmatrix} a(\lambda) & 0 \\ 0 & -a(\lambda) \end{bmatrix}, \ \  \ \ \ 
f_0 = \begin{bmatrix} 0 & b(\lambda) \\ c(\lambda) & 0 \end{bmatrix}
\]
where $a(\lambda), \ b(\lambda), \ c(\lambda)$ are such that $b(\lambda)c(\lambda) - a(\lambda)^2 + 1 = 0$. The dependence on an indeterminate $\lambda$ is needed to ensure that powers of $f_0$ and $f_1$ are algebraically independent from one another (excluding the two defining relations $f_0^2 - f_1^2 + 1 = f_0f_1 + f_1f_0 = 0$). Examples suggesting a interpretation as deformations of complex numbers are
\[
f_1 \to \begin{bmatrix} \lambda & 0 \\ 0 & -\lambda \end{bmatrix}, \ \  \ \ \ 
f_0 = \begin{bmatrix} 0 & \lambda+1 \\ \lambda-1 & 0 \end{bmatrix}
\]
and 
\[
f_1 \to \begin{bmatrix} \sin(\lambda) & 0 \\ 0 & -\sin(\lambda) \end{bmatrix}, \ \  \ \ \ 
f_0 = \begin{bmatrix} 0 & \cos(\lambda) \\ -\cos(\lambda) & 0 \end{bmatrix}
\]
(allowing more general functions of $\lambda$). These can also be viewed as curves in $\mathfrak{sl}_2(\mathbb{R})$. 

Such representations of $F$ enables us to give a more concrete description of $F^\circ$. Since $F\subset M_2(\mathbb{R}[\lambda])$, there is a coalgebra map 
\[
M_2(\mathbb{R}[\lambda])^\circ= \mathbb{R}[\lambda]^\circ \otimes M_2(\mathbb{R})^\circ \to F^\circ
\]
The coalgebras $\mathbb{R}[\lambda]^\circ $ and $M_2(\mathbb{R})^\circ \to F^\circ$ were described in Section \ref{ssec:exmeasco}, although we need to note the slight subtlety that in section \ref{ssec:exmeasco} we considered $\mathbb{C}[\lambda]^\circ $ rather than $\mathbb{R}[\lambda]^\circ $. 
The necessary modification for the real case is that $\mathbb{R}[\lambda]^\circ $ has a real basis $\{\frak{Re} \frac{d^n}{d\lambda^n}|_b, \frak{Im} \frac{d^n}{d\lambda^n}|_b\} $ for complex $b$. 
 Thus $M_2(\mathbb{R}[\lambda])^\circ$ has a basis $\{ \frak{Re}\frac{d^n}{d\lambda^n}|_b \xi_{ij}, \frak{Im}\frac{d^n}{d\lambda^n}|_b \xi_{ij} \}$.
 
 $F$ is a subalgebra of $M_2(\mathbb{R}[\lambda])$, so the map  $M_2(\mathbb{R}[\lambda])^\circ \to F^\circ$ has a kernel. A general element $M(\lambda)$ in $F$ can be written as
\[
 M(\lambda) = \begin{bmatrix} p(\lambda) & q(\lambda)(\lambda + 1)\\
 		 -q(-\lambda)(-\lambda +1) & p(-\lambda) \end{bmatrix},
\]
 for poilynomials $p(\lambda), \ q(\lambda)$. Thus 
 \begin{align}
 \frac{d^n}{d\lambda^n}|_b<\xi_{22},M(\lambda)> =  &(-1)^n \frac{d^n}{d\lambda^n}|_{-b}<\xi_{22},M(\lambda)>\\
  \frac{d^n}{d\lambda^n}|_b<\xi_{12} , M(\lambda)> =  & (-1)^{n+1} \frac{d^n}{d\lambda^n}|_{-b}<\xi_{21},M(\lambda)>.
 \end{align}
 Thus $F^\circ$ has a basis  $\{ \frak{Re}\frac{d^n}{d\lambda^n}|_b \xi_{11}, \frak{Im}\frac{d^n}{d\lambda^n}|_b \xi_{11} , \frak{Re}\frac{d^n}{d\lambda^n}|_b \xi_{12}, \frak{Im}\frac{d^n}{d\lambda^n}|_b \xi_{12}\}$.

 and comultiplication given by
\begin{align}
\triangle \frak{Re}\frac{d^n}{d\lambda^n}|_b \xi_{ij} = & \sum_{m,k} \binom{n}{m} \frak{Re}\frac{d^m}{d\lambda^m}|_b \xi_{ik}\otimes  \frak{Re}\frac{d^{n-m}}{d\lambda^{n-m}}|_b \xi_{kj} -  \sum_{m,k} \binom{n}{m} \frak{Im}\frac{d^m}{d\lambda^m}|_b \xi_{ik}\otimes  \frak{Im}\frac{d^{n-m}}{d\lambda^{n-m}}|_b \xi_{kj}  \\
\triangle \frak{Im}\frac{d^n}{d\lambda^n}|_b \xi_{ij} = & \sum_{m,k} \binom{n}{m} \frak{Re}\frac{d^m}{d\lambda^m}|_b \xi_{ik}\otimes  \frak{Im}\frac{d^{n-m}}{d\lambda^{n-m}}|_b \xi_{kj} +  \sum_{m,k} \binom{n}{m} \frak{Im}\frac{d^m}{d\lambda^m}|_b \xi_{ik}\otimes  \frak{Re}\frac{d^{n-m}}{d\lambda^{n-m}}|_b \xi_{kj}  
\end{align}
This presentation of $F$ and $F^\circ$ provides a hands-on approach to using $F$.  
The finite dimensional subcoalgebras 
\[
F^\circ_b = < \frac{d^n}{d\lambda^n}|_b \xi_{ij}| n\geq 0, i, \ j, \in \{1,2\}>
\]
for $n \leq N$ provide finite dimensional representations of $F$ which could be induced to provide representations of 
$M_2(\mathbb{R}[\lambda, \lambda^{-1}])$ and related algebras.  

\subsection{Matrix methods}
\label{ssec:matcomp}
The method used to calculate $F(\mathbb{C},\mathbb{C})$ extends to cases where $A$ and $B$ are finite dimensional with dim$B = n$ using matrices. Choose a basis $\{ a_i\}$ (resp. $\{b_r\}$) for $A$ (resp. $B$) and let $B_r$ be the matrices representing $b_r$ in the left regular representation of $B$. Let $\{ b_r^*\}$ denote the dual basis.  Let  $\{c_{ij}\}$ and $\{d_{rs}\}$ be the structure constants 
\[
a_i a_j = \sum_k c_{ij}^k a_k, \ \ \ b_rb_s = \sum_t d_{rs}^t b_t
\]
Writing $f_{ir} = a_i \otimes b^*_r$, the 
Write $\eta : A \to F(A,B)\otimes B$ in terms of these bases so that $\eta (a_i)$ is an $n\times n$ matrix 
\[
\eta (a_i) = \sum_r f_{ir}B_r
\]
with coefficients in $F(A,B)$. The identities required of $F(A,B)$ are given by comparing the matrix entries:
\begin{align}
( \sum_r f_{ir}B_r )( \sum_s f_{js}B_s)& = \eta(a_i )\eta(a_j ) \\
		&= \sum_k c_{ij}^k \eta(a_k) \\
		& = \sum_{k \ r}c_{ij}^k f_{kr}B_r
\end{align}
This can be computed by matrix multiplication, taking care not to assume commutativity of the $f_{ij}$. This method is particularly useful in calculations where $A = B=\mathbb{Q}[x]/p $. It will also be helpful in understanding and  computing with the module version of this construction.

\subsection{Calculating $F(\mathbb{Q}[x]/p,\mathbb{Q}[x]/p)$ over $\mathbb{Q}$}
Here $k = \mathbb{Q}$ and $p = p(x) =  \sum p_ix^i$ is a polynomial. The problem of computing $F(\mathbb{Q}[x]/p,\mathbb{Q}[x]/p)$ can be broken into three stages as follows.
\begin{proposition} \label{prop:Qcalc}
\begin{enumerate}
	\item $F(\mathbb{Q}[x],\mathbb{Q}[x]) = T(\mathbb{Q}[x]^\circ)$.
	\item $F(\mathbb{Q}[x],\mathbb{Q}[x]/p) = T((\mathbb{Q}[x]/p)^\circ)$.
	\item $F(\mathbb{Q}[x]/p,\mathbb{Q}[x]/p) = T((\mathbb{Q}[x]/p^\circ))/J$ where $J$ is the ideal generated by 	
\[p_0\epsilon(\alpha) + \sum p_i\triangle ^{i-1}\alpha
\]
for $\alpha \in (\mathbb{Q}[x]/p)^\circ$
\end{enumerate}
\end{proposition}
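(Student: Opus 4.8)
The plan is to compute all three Sweedler products by describing the functor each of them represents and then invoking the Yoneda lemma, rather than by manipulating the defining ideal of $\triangleleft$ directly. For an arbitrary test algebra $S$, Theorem \ref{thm:actions} supplies a natural bijection
\[
Alg(F(A,B), S) = Alg(A \triangleleft B^\circ, S) \cong Alg(A, [B^\circ, S]),
\]
so it suffices to understand algebra homomorphisms from $A$ into the convolution algebra $[B^\circ, S] = \Hom_k(B^\circ, S)$. In every part $A$ is generated by the single element $x$, so such a homomorphism is determined by the image $\ell \in [B^\circ, S]$ of $x$, subject only to the relation that $A$ imposes on $x$. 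Comparing the resulting functor with the one represented by a tensor algebra (or a quotient of it) and applying Yoneda identifies $F(A,B)$ on the nose.

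For Parts 1 and 2 I would prove the single statement $F(\mathbb{Q}[x], B) \cong T(B^\circ)$ for \emph{any} algebra $B$, then specialise to $B = \mathbb{Q}[x]$ and $B = \mathbb{Q}[x]/p$. Here $A = \mathbb{Q}[x]$ imposes no relation, so $Alg(\mathbb{Q}[x], R) \cong R$ naturally in $R$ (a homomorphism is just a choice of element, namely its value on $x$). Taking $R = [B^\circ, S]$ and using the universal property $Alg(T(V), S) \cong \Hom_k(V, S)$ of the tensor algebra on a vector space $V$, we obtain
\[
Alg(F(\mathbb{Q}[x], B), S) \cong [B^\circ, S] = \Hom_k(B^\circ, S) \cong Alg(T(B^\circ), S),
\]
naturally in $S$, and Yoneda gives $F(\mathbb{Q}[x], B) \cong T(B^\circ)$. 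Note the adjunction of Theorem \ref{thm:actions} holds for the coalgebra $B^\circ$ with no finiteness hypothesis, so this argument is legitimate even when $B = \mathbb{Q}[x]$ is infinite dimensional.

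For Part 3 take $A = B = \mathbb{Q}[x]/p$. Now a homomorphism $\mathbb{Q}[x]/p \to R$ is an element $r \in R$ with $p(r) = \sum_i p_i r^i = 0$, so that
\[
Alg(\mathbb{Q}[x]/p, [B^\circ, S]) \cong \{\, \ell \in [B^\circ, S] : \textstyle\sum_i p_i \ell^{* i} = 0 \,\},
\]
where $\ell^{* i}$ is the $i$-fold convolution power and $\ell^{* 0}$ is the convolution unit $\alpha \mapsto \epsilon(\alpha) 1_S$. The key computation is that convolution powers are computed by iterated comultiplication followed by iterated multiplication: if $\Phi : T(B^\circ) \to S$ is the homomorphism extending a linear map $\ell : B^\circ \to S$, then $\Phi$ sends the degree-$i$ element obtained by multiplying out $\triangle^{i-1}\alpha$ to $\ell^{* i}(\alpha)$, and the constant term $\epsilon(\alpha)$ to $\ell^{* 0}(\alpha)$. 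Hence $\Phi$ kills the generator $p_0\epsilon(\alpha) + \sum_i p_i \triangle^{i-1}\alpha$ of $J$ for every $\alpha$ precisely when $\sum_i p_i \ell^{* i} = 0$. This yields a natural bijection $Alg(T(B^\circ)/J, S) \cong \{\, \ell : p(\ell) = 0 \,\} \cong Alg(F(\mathbb{Q}[x]/p, B), S)$, and Yoneda with $B = \mathbb{Q}[x]/p$ finishes Part 3.

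The main obstacle, and really the only non-formal point, is the identification in Part 3 of the $i$-th convolution power $\ell^{* i}$ with the image of the degree-$i$ generator coming from $\triangle^{i-1}\alpha$ under $\Phi$. This rests on the standard but bookkeeping-heavy fact that convolution powers are given by iterated multiplication precomposed with $\triangle^{i-1}$, and one must track the paper's reversed Sweedler convention $\triangle\alpha = \sum \alpha_{(2)} \otimes \alpha_{(1)}$ together with the order of multiplication in $S$ to make the correspondence exact. A cleaner alternative route to Part 3 uses functoriality of $\triangleleft$: the quotient $q : \mathbb{Q}[x] \to \mathbb{Q}[x]/p$ induces a surjection $F(\mathbb{Q}[x], \mathbb{Q}[x]/p) = T((\mathbb{Q}[x]/p)^\circ) \to F(\mathbb{Q}[x]/p, \mathbb{Q}[x]/p)$, and one identifies its kernel with $J$; but verifying that the kernel is exactly $J$ reduces to the same convolution-power computation.
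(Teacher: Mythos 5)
Your proposal is correct and follows essentially the same route as the paper: both prove the identifications by applying Yoneda's lemma to the adjunction $Alg(F(A,B),C)\cong Alg(A,[B^\circ,C])$ of Theorem \ref{thm:actions}, using freeness of $\mathbb{Q}[x]$ and of the tensor algebra for parts 1 and 2, and for part 3 matching the convolution relation $\sum_i p_i\ell^{*i}=0$ against vanishing of the extended homomorphism on the generators of $J$. Your packaging of parts 1 and 2 as the single statement $F(\mathbb{Q}[x],B)\cong T(B^\circ)$ for arbitrary $B$ is a harmless streamlining of what the paper does in its Steps 1 and 2.
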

\begin{proof}
Yoneda's lemma provides a strategy for proving all three parts. For algebras $A, \ B, C$, from \ref{thm:actions} we have $Alg(F(A,B),C) \cong Alg(A, [B^\circ,C])$. Yoneda's lemma says that  $Alg(F(A,B),C) \cong Alg(A, [B^\circ,C]) \cong  Alg(F', C)$ (with the isomorphisms being natural) for all algebras $C$ if and only if $F' = F(A,B)$.  Therefore it is sufficient to show that
\begin{itemize}
	\item Step 1. $Alg(\mathbb{Q}[x],[\mathbb{Q}[x]^\circ, C]) \cong Alg(T(\mathbb{Q}[x]^\circ), C)$
	\item Step 2. $Alg(\mathbb{Q}[x],[(\mathbb{Q}[x]/p)^\circ, C]) \cong Alg(T((\mathbb{Q}[x]/p)^\circ), C)$
	\item Step 3. $Alg(\mathbb{Q}[x]/p,[(\mathbb{Q}[x]/p)^\circ, C]) \cong Alg((T(\mathbb{Q}[x]/p)^\circ)/J, C)$.
\end{itemize}
\emph{Step 1}. An algebra homomorphism $\theta:\mathbb{Q}[x] \to [\mathbb{Q}[x]^\circ, C]$ is determined by the image of $\theta(x):\mathbb{Q}[x]^\circ \to C$ with no restrictions. 

Similarly an algebra homomorphism $\tilde{\theta}: T(\mathbb{Q}[x]^\circ) \to C$ is determined by its restriction to $T^1(\mathbb{Q}[x]^\circ) = \mathbb{Q}[x]^\circ$.  There are no restrictions; any linear map $\tilde{\theta}$ gives rise to an algebra homomorphism $\tilde{\theta}: T(\mathbb{Q}[x]^\circ) \to C$.  

The correspondence $\theta \leftrightarrow \tilde{\theta}$ thus establishes a (natural) correspondence  $Alg(\mathbb{Q}[x],[\mathbb{Q}[x]^\circ, C]) \cong Alg(T(\mathbb{Q}[x]^\circ, C)$ and the first part of the proposition.

\emph{Step 2} The argument is identical, replacing $\mathbb{Q}[x]^\circ$ with  $\mathbb{(Q}[x]/p)^\circ$.

\emph{Step 3} Observe that $\theta:\mathbb{Q}[x]/p \to [(\mathbb{Q}[x]^/p)^\circ, C]$ is an algebra homomorphism if $\theta(f(x)p)=0$ for any product  $f(x)p$ in the ideal generated by $p$. That is, $\theta(f(x)p) = 0$ if and only if for any $\alpha$ in $(\mathbb{Q}[x]^/p)^\circ$
\[
<\alpha, f(x)p> = \sum_{(\alpha)}<\alpha_{(2)},f(x)><\alpha_{(1)},p> = 0.
\]
Therefore, $\theta(f(x)p)=0$ if and only if for any $\alpha$,
\begin{align}
<\alpha,\theta(p)> & =   \sum^{n-1}_{i=0} p_i<\alpha, \theta(x^i)> \\
		& =  p_0\epsilon(\alpha) +p_1<\alpha, \theta(x)> + \sum_{k=2}^{n-1}\sum_{(\alpha)} <\alpha_{(k)}, \theta(x)>...<\alpha_{(1)},\theta(x)>
\end{align}
On the other hand, $\tilde{\theta}:T(\mathbb{Q}[x]/p)^\circ) \to C$ vanishes on $J$ if and only if 
\begin{align}
0 & = \tilde{\theta}(p_0\epsilon(\alpha) + p_1\alpha + \sum_{i=2}^{n-1} \sum_{(\alpha)} \alpha_{(i)} \otimes ... \otimes \alpha_{(1)}) \\
 & = p_0\epsilon(\alpha)+p_1\tilde{\theta}(\alpha) +\sum_{i=2}^{n-1} p_i\sum_{(\alpha)}\tilde{\theta}(\alpha)_{(i)}... \tilde{\theta}(\alpha)_{(1)} \\
 & =  p_0\epsilon(\alpha) +p_1<\alpha, \theta(x)> + \sum_{k=2}^{n-1}\sum_{(\alpha)} <\alpha_{(k)}, \theta(x)>...<\alpha_{(1)},\theta(x)>
\end{align}
as required.
\end{proof}

\subsection{Extensions of $\mathbb{Q}[x]/p$}
Let $A = \mathbb{Q}[x]/p(x)$. While it is inconvenient to calculate $F(A,A)$ in its entirety, extensions $\theta:A \to k\otimes A$ for a field $k$ correspond to algebra homorphisms $F(\theta): F(A,A) \to k$.  These can be calculated as follows.

The algebra $\End(A)$ can be identified with $M_n(\mathbb{Q})$ via the basis of powers of $x$,  $\{ x^i\}$. Left multiplication defines a representation of $L:A \to M_n(\mathbb{Q})$ in which $x$ is represented by its companion matrix $C$.  Denote by $<C>$ the subalgebra of $M_n(\mathbb{Q})$ generated by $C$.
We can further identify $k\otimes A= k[x]$ with its image in $M_n(k)$ for $k$ a field containing $\mathbb{Q}$, so that if $Y$ is any matrix in $<C>$, the first column of $Y$ gives the coefficients $Y = \sum y_i X^i$.

Now assume the roots of $p$ are all distinct and are contained in the field $k$, and choose an ordering $\{\mu_1, ... , \mu_n\}$. The Vandermonde matrix $V$ expresses roots in terms of the basis $\{x^i\}$, so that if $V = (v_{ij})$, ${\mu_j = \sum v_{ij} x^i}$. Conjugation by $V$ diagonalises $k\otimes A$ considered as a subalgebra of $M_n(k)$, identifying $k\otimes A$ with the subalgebra $D_n(k)$ of diagonal matrices. 

Now let $[n]$ denote the set of integers from 1 to {n}, and let $\sigma: [n] \to [n]$ be any function.  Then $\sigma$ induces a homomorphism $\tilde{\sigma} :D_n(k) \to D_n(k)$ simply by repositioning the entries. These homomorphisms form a monoid which includes all of the symmetric group $S_n$ as its invertible elements.

Conjugating $\tilde{\sigma}$ by Vandermonde matrices provides a homomorphism
\[
W_\sigma  = V^{-1} \tilde{\sigma} V: k \otimes A \to k \otimes A
\]
or, by restriction, an extension $W_\sigma : A \to k\otimes A.$ 

Consider the matrix representing $W_\sigma$ in terms of the basis $\{1, C, C^2,...,C^{n-1}\}$. With respect to this basis writing $W_\sigma = \sum w_iC^i$, the coefficients $w_i$ are exactly the first column of $W_\sigma$. 

Write $\eta = \eta(A,A):A \to F(A,A)$ as $\eta (x) = f_0 + f_1x + f_2x^2 + ... + f_{n-1}s^{n-1}$. The homomorphism $F(W_\sigma): F(A,A) \to k$ is given by $F(f_i) = w_i$

One can ask if the homomorphisms $W_\sigma$ correspond to Galois transformations of $k$.  For each root $\mu$ there is a homomorphism $\pi_\mu: \mathbb{Q}[x]/p(x) \to k$ sending $x \mapsto \mu$. The requirement is that $\pi_{\bar{\sigma}(\mu)}(x) = \pi_{\mu}W_\sigma (x)$ for all roots $\mu$.

\subsection{Representations of $F(\mathbb{Q}[x]/p,\mathbb{Q}[x]/p)$ in $M_n(k[\lambda])$}. 
The loop algebra representations of $F(\mathbb{C},\mathbb{C})$ have analogues for $\mathbb{Q}[x]/p$. With notation as in the previous subsection, let $Z$ be any matrix in $M_n$, and let $g(u) = exp(uZ)$. Then $g(u)Cg(-u)$ also satisfies the polynomial $p$. 

Expand $g(u)Cg(-u)$ as the exponential of $ad(Z)$:
\begin{align}
g(u)Cg(-u) & =  exp(ad(uZ))C\\
		& = C + u[Z,C] + \frac{1}{2} u^2 [Z,[Z,C]] + ...
\end{align}
This expression satisfies the polynomial $p$ \emph{for any indeterminate t}.  Now let $u = \lambda x$. Then
\begin{align}
g(u)Cg(-u) & =  exp(x.ad(\lambda Z))C\\
		& = C + [\lambda Z,C]x + \frac{1}{2} [\lambda Z,[\lambda Z,C]]x^2  + ...
\end{align}
which can be considered as an element in $M_n(\mathbb{Q}[\lambda]) \otimes \mathbb{Q}[x]/p$.
 
 The assignment $x \mapsto g(u)C(g-u)$ written out this way then provides an extension homomorphism
  $\sigma_Z:\mathbb{Q}[x]/p \to M_n(\mathbb{Q}[\lambda]) \otimes \mathbb{Q}[x]/p$ as promised. The corresponding map $F(\sigma)$ represents $F(\mathbb{Q}[x]/p,\mathbb{Q}[x]/p)$ in $M_n(k[\lambda])$ in $M_n(\mathbb{Q}[\lambda]) $.

\subsection{$F(B, B)$ for $B = k[d]/d^2$, and relation to the Pareigis Hopf algebra}

By analogy with $F(\mathbb{C},\mathbb{C})$, the algebra $F(B,B)$ for $B = k[d]/d^2$, The algebra $F(B,B)$ is generated by two elements, $g_0, \ g_1$, such that 
\[
\eta: B \to F(B,B) \otimes B,  \ \ \ \eta(d) = g_0\otimes 1 + g_1 \otimes d.
\] 
The same reasoning gives explicit defining identities,
\[
g_0g_1 = g_1g_0 = 0 = g_0^2
\]
and an explicit basis
\[
F(B,B) = <g_1^i, g_0g_1^i: i \geq 0>.
\]
Comultiplication is determined: since $1\otimes \eta \circ \eta = \triangle \otimes 1 \circ \eta$,
\[
\triangle g_1 = g_1\otimes g_1, \ \ \ \triangle g_0 = g_1 \otimes g_0+ g_0 \otimes 1.
\]
The counit $\epsilon: F(B,B) \to k$ is determined by the evaluation map $B\otimes B^* \to k$, so that
\[
\epsilon(g_0) = 0, \epsilon (g_1) = 1.
\]

This bialgebra is closely related to a well understood Hopf algebra, first presented by Pareigis \cite{pareigis1980hopfalg}. 

\begin{definition} \label{pareigis}
There is a non-commutative, non-cocommutative Hopf algebra $H$, the Pareigis Hopf Algebra, defined over any field $k$, which has presentation given by generators and relations:
\begin{enumerate}
    \item $H = k[x, y, 1/y]/(xy + yx, x^2)$ as an algebra (1/y is a left and right multiplicative inverse for y).
    \item The antipode $s: H \to H$ is defined by $y \mapsto 1/y, x \mapsto xy$
    \item $\triangle : H \to H \otimes H, y \mapsto y \otimes y, x \mapsto x \otimes 1 + (1/y) \otimes x$
    \item $\epsilon: H \to k, x \mapsto 0, y \mapsto 1$
\end{enumerate}
\end{definition}

The relationship of $H$ with $F(B,B)$ is as follows. Let $H^-$ be the subalgebra of $H$ generated by $x$ and $1/y$. This is a bialgebra, but not a Hopf algebra, as it is not closed under the antipode.

\begin{proposition} There is a homomorphism 
\[
    p:F(B,B) \to H
\]
establishing an isomorphism of $F(B,B)$ with $H^-$ as bialgebras.
\end{proposition}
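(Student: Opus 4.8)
The plan is to produce the evident map on generators and then verify, in order, that it is an algebra homomorphism, a coalgebra homomorphism, surjective onto $H^-$, and injective. The comultiplications dictate the only sensible choice: $\triangle g_1 = g_1 \otimes g_1$ forces $g_1$ to a grouplike element, and the grouplike elements available inside $H^-$ are the powers of $1/y$, so I set $p(g_1)=1/y$; then matching $\triangle g_0 = g_1\otimes g_0 + g_0\otimes 1$ against $\triangle x = x\otimes 1 + (1/y)\otimes x$ forces $p(g_0)=x$. So define $p\colon F(B,B)\to H$ by $p(g_0)=x$, $p(g_1)=1/y$ and extend multiplicatively.

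First I would check that $p$ respects the defining relations of $F(B,B)$. The relation $g_0^2=0$ maps to $x^2=0$, which holds in $H$. The remaining relation is the one forced by $d^2=0$ in $B$ and already visible in the given basis $\{g_1^i, g_0 g_1^i\}$, namely the anticommutation $g_0 g_1 + g_1 g_0 = 0$; under $p$ this becomes $x(1/y)+(1/y)x=0$, which is exactly the Pareigis relation $xy+yx=0$ conjugated by $y^{-1}$ (multiply $xy+yx=0$ on the left and right by $y^{-1}$). Hence $p$ is a well-defined algebra homomorphism. That $p$ is a bialgebra map is then checked on the two generators, which suffices because $\triangle$ and $\epsilon$ are algebra maps and $g_0,g_1$ generate: for the counit, $\epsilon(x)=0=\epsilon(g_0)$ and $\epsilon(1/y)=\epsilon(y)^{-1}=1=\epsilon(g_1)$; for the comultiplication, the inverse of a grouplike is grouplike, so $\triangle(1/y)=(1/y)\otimes(1/y)=(p\otimes p)\triangle g_1$, while $\triangle x=(p\otimes p)(g_1\otimes g_0+g_0\otimes 1)=(p\otimes p)\triangle g_0$ by the very choice of $p$.

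Finally I would identify the image and establish injectivity, the only step with genuine content. The image of $p$ is the subalgebra generated by $x$ and $1/y$, which is $H^-$ by definition, so $p$ is surjective onto $H^-$. For injectivity I would use that $x^2=0$ together with $xy=-yx$ normalises every word, so $H$ has $k$-basis $\{\,y^n,\ xy^n : n\in\mathbb{Z}\,\}$ and $H^-$ the sub-basis with $n\le 0$. Under $p$ the basis $\{g_1^i, g_0g_1^i : i\ge 0\}$ of $F(B,B)$ goes to $\{y^{-i}, xy^{-i} : i\ge 0\}$, which is precisely that sub-basis, so $p$ carries a basis bijectively to a basis and is therefore a linear isomorphism onto $H^-$; being a bialgebra map it is a bialgebra isomorphism. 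The main obstacle is thus not any single computation but securing the basis of $H$ (a standard PBW/Ore-extension argument for $H=k[y,y^{-1}][x;\sigma]/(x^2)$ with $\sigma(y)=-y$) and confirming that it restricts correctly to $H^-$; once that basis is in hand, the basis comparison makes injectivity immediate.
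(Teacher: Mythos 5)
Your proposal is correct and takes essentially the same route as the paper, whose entire proof is to set $p(g_0)=x$, $p(g_1)=1/y$ and ``observe that the generators and relations coincide.'' You simply supply the details the paper leaves implicit --- reading the defining relation as the anticommutation $g_0g_1+g_1g_0=0$ (as forced by $d^2=0$ and the stated basis), checking compatibility with $\triangle$ and $\epsilon$, and the basis comparison in $H$ that rules out extra relations in $H^-$ and makes injectivity rigorous.
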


\begin{proof}
Set $p(g_0) = x, \ p(g_1) = 1/y.$ and observe that the generators and relations coincide.
\end{proof}

\section{Comodules}

A (left) \emph{comodule} $X$ over a coalgebra $H$ is a vector space with a comultiplication
\[
\triangle_X : X \to H\otimes X,  \ \ \  \triangle_X (d) = \sum_{(d)} d_{(1)}\otimes d_{(0)}
\]
satisfying coassociativity:
\[
(1_C \otimes \triangle_X) \circ \triangle_X = (\triangle_C \otimes 1_X) \circ \triangle_X : X \to C\otimes C \otimes X
\]
and counit
\[
(\epsilon \otimes  1_X) \circ \triangle_X = 1_X : X \to k\otimes X = X.
\]

When it is helpful, a left comodule $X$ may be denoted $_HX $ to emphasise that $X$ is an $H$ comodule. Similarly, left modules $M$ over $A$ may be denoted $_AM$.   

If $_HD_1, \ _HD_2$ are two comodules, a linear map $\zeta : D_1 \to D_2$ is a map of $H$-comodules if
\[
\triangle_2 \circ \zeta = 1_H \otimes \zeta \circ \triangle_1: D_1 \to H \otimes D_2.
\]

Denote by $_HComod$ the category of (left) $H$-comodules.

\subsubsection*{Example: Duals of modules} The dual coalgebra model provides examples of comodules. Let $V$ be a (finite dimensional) vector space on which a finite dimensional algebra $S$ acts on the right. Then dualising the action gives rise to a comultiplication
\[
\triangle :V^* \to (V\otimes S)^* = S^* \otimes V^*
\]

\subsubsection*{Example: Comodules of $F(B,B)$ for $B= k[d]/d^2$} 
Let $M = \oplus M_i$ be a chain complex with $M_i = 0$ for $i \geq 1$ with boundary map $d:M_i \to M_{i+1}$.
 Define a comodule structure on $M$ by setting
\[
\rho: M \to F(B,B)\otimes M, \ \ \ m_i \mapsto g_1^i \otimes m_i + g_0g_1^{i+1}\otimes dm_i.
\]
That $M_i$ is a comodule can be verified directly. 

Conversely, given an $F(B,B)$ comodule $Z$, set $\widetilde{Z}_k$ to be the subspace of $Z$ spanned by elements $z_{(1,i)}, \ z_{(0,i)}$ in the expression 
\[
\triangle z = \sum_i g_{i}^{-i} \otimes z_{(1,i)} + g_0g_1^{-i+1}\otimes z_{(0,i+1)}
\]

This connection provides homological links with extensions more generally. Let $K$ be a coalgebra and $Z$ a $K$ comodule.  A coalgebra homomorphism $\kappa: F(B,B) \to K$ gives $Z$ the structure of an $F(B,B)$ comodule, and hence defines a chain complex $\widetilde{Z}$. Suppose $A$ is a finite dimensional algebra and let $V$ be a finite dimensional $A$ module. The theme of the present section is the construction of a universal $F(A,A)$ comodule $D(V,V)$.  In this way, coalgebra homomorphisms $\rho: F(B,B) \to F(A,A)$ provide a source of chain complexes. Thus the coalgebra structure of $F(A,A)$ is closely linked to the homological information pertaining to $A$.

This relationship between the Pareigis Hopf algebra and homologies are developed extensively (and correctly in the differential graded setting) in \cite{anel2013sweedler}.

\subsection{Measuring comodules}
\begin{definition} Let $A$, $B$ be algebras, let $_AM, \ _BN$ be $A$ and $B$ modules respectively.  Let $\rho:H \to \Hom_k(A,B)$ be a measuring coalgebra, and let $_HX$ be an $H$ comodule.  Say that $\gamma: X \to \Hom_k(M,N)$ \emph{measures} if 
\[
\gamma(x)(am) = \sum_{(x)} <\rho(x_{(1)}),a> <\gamma (x_{(0)}), m>
\]
for all $x$ in $X$, $a$ in $A$, and $m$ in $M$.
\end{definition}

As with measuring coalgebras (proposition \ref{prop:meas}), there is an equivalent condition.

\begin{proposition} \label{prop:meascomod}
$\gamma: _HX \to \Hom_k(_AM, _BN)$ measures if and only if the corresponding map
\[
\gamma:_AM \to \Hom_k(_HX, _BN) 
\]
is a map of $A$-modules.
\end{proposition}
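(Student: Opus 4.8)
The plan is to mirror the proof of Proposition \ref{prop:meas}, replacing the role played there by the convolution algebra with a convolution-type module. First I would record the natural isomorphism of vector spaces
\[
\Hom_k(X, \Hom_k(M,N)) \cong \Hom_k(X\otimes M, N) \cong \Hom_k(M, \Hom_k(X,N)),
\]
under which $\gamma$ corresponds to the map $\gamma^\sharp : M \to \Hom_k(X,N)$ determined by $\langle \gamma^\sharp(m), x\rangle = \langle \gamma(x), m\rangle$ for all $m \in M$ and $x \in X$; both sides equal $\gamma(x)(m) \in N$.

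Next I would equip the target with its $A$-module structure. By Proposition \ref{prop:meas} the measuring map $\rho$ is the same data as an algebra homomorphism $\rho : A \to [H,B]$ into the convolution algebra. I claim $\Hom_k({}_HX,{}_BN)$ is a left $[H,B]$-module via
\[
(\alpha \cdot \phi)(x) = \sum_{(x)} \langle \alpha, x_{(1)}\rangle \cdot \phi(x_{(0)}),
\]
where $\langle\alpha, x_{(1)}\rangle \in B$ acts on $\phi(x_{(0)}) \in N$ through the $B$-module structure; restricting along $\rho$ then makes $\Hom_k({}_HX,{}_BN)$ a left $A$-module, with $a$ acting by $(a\cdot\phi)(x) = \sum_{(x)}\langle\rho(x_{(1)}), a\rangle\cdot\phi(x_{(0)})$. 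Verifying that this is a genuine module action is the computational heart of the argument and the step I expect to require the most care: associativity $(\alpha * \beta)\cdot\phi = \alpha\cdot(\beta\cdot\phi)$ follows by applying comodule coassociativity to rewrite $\sum x_{(1)}\otimes x_{(0)(1)}\otimes x_{(0)(0)}$ as $\sum (x_{(1)})_{(2)}\otimes (x_{(1)})_{(1)}\otimes x_{(0)}$ and then recognising $\sum \langle\alpha, (x_{(1)})_{(2)}\rangle\langle\beta, (x_{(1)})_{(1)}\rangle$ as the convolution product $\langle\alpha*\beta, x_{(1)}\rangle$, while the unit axiom uses the comodule counit condition $(\epsilon\otimes 1_X)\circ\triangle_X = 1_X$ together with the fact that $1_B$ acts as the identity on $N$. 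Throughout one must keep track of the paper's reversed Sweedler conventions for both $\triangle_H$ and $\triangle_X$.

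With the module structure in hand the equivalence is a direct unwinding. Evaluating the two sides of the $A$-linearity condition $\gamma^\sharp(am) = a\cdot\gamma^\sharp(m)$ at an arbitrary $x \in X$ gives, on the left, $\gamma^\sharp(am)(x) = \gamma(x)(am)$, and on the right,
\[
(a\cdot\gamma^\sharp(m))(x) = \sum_{(x)} \langle\rho(x_{(1)}), a\rangle\cdot\gamma^\sharp(m)(x_{(0)}) = \sum_{(x)}\langle\rho(x_{(1)}), a\rangle\cdot\langle\gamma(x_{(0)}), m\rangle.
\]
These two expressions agree for all $a$, $m$, $x$ precisely when $\gamma$ satisfies the measuring identity of the definition, so $\gamma^\sharp$ is a map of $A$-modules if and only if $\gamma$ measures. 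Since the swap isomorphism and the module structure are both natural in their arguments, no further compatibility needs to be checked; the only genuine obstacle is confirming that the convolution formula does define an $A$-action, and everything else is formal once Proposition \ref{prop:meas} is invoked.
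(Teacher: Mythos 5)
Your proof is correct and takes essentially the same approach as the paper: both define the convolution-type action of $[H,B]$ on $\Hom_k({}_HX,{}_BN)$ by $\langle \alpha * \zeta, x\rangle = \sum_{(x)}\langle\alpha, x_{(1)}\rangle\langle\zeta, x_{(0)}\rangle$, pull it back along the algebra homomorphism $A \to [H,B]$ supplied by Proposition \ref{prop:meas}, and thereby identify the measuring condition with $A$-linearity. The only difference is one of detail: you verify the module axioms and carry out the final unwinding explicitly, both of which the paper leaves implicit.
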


As for algebras and coalgebras, use the notation
\[
\Hom_k(_HX,_BN) := [X,N] = _{[H,B]}[X,N]
\]
to emphasise the module structure.

\begin{proof}
$\Hom_k(_HX, _BN)$ is an $[H,B]$ module with the action given by 
\[
<\alpha * \zeta ,x> = \sum_{(x)} <\alpha,x_{(1)}><\zeta,x_{(0)}>
\]
for $\alpha$ in $[H,B]$, $\zeta$ in $[X, N]$ and $x$ in $X$. The measuring map $H \to \Hom_k(A,B)$ corresponds to an algebra homomorphism $A \to [H,B]$. With respect to this map, $[X,N]$ becomes an $A$-module.  
\end{proof}.

\subsection{Examples of measuring comodules} 
\subsubsection*{Example: Module maps}
Let $kf$ be the one dimensional comodule over the one dimensional coalgebra $ke$ and let $\triangle f = e \otimes f$. Suppose $\rho : ke \to \Hom(A,B)$ measures (so that $\rho(e)$ is a homomorphism).  A linear map $\theta : kf \to \Hom (_AM,_BN)$ is a measuring map if and only if $\theta(f)$ is a module map with respect to the homomorphism $\rho(e)$.

\subsubsection*{Example: Modules of extensions}\label{example:extmod}
Let $\psi: A \to S\otimes B$ is an extension, Let $M, \ N, \ V$ be modules over $A, \ B, S$ respectiviely and suppose that $V$ is finite dimensional. Suppose also that there is a map $\theta: M \to V\otimes N$ which is an $A$ module map with the $A$ module structure on $V \otimes N$ that conferred by $\psi$. That $V^*$ is an $S^\circ$ module can be seen by dualising the maps defining the action of $S$ on $V$ so that for $v^*$ in $V^*$,  $s$ in S and $w$ in $V$,
\[
<v^*, sw> = \sum_{(v^*)} <v^*_{(1)}, s><v^*_{(0)}, w>.
\] 
It is helpful to extend the application of Sweedler notation to express elements in tensor products. Set
\begin{align}
\psi(a) = & \sum_{(\psi(a))} \psi (a)_{(1)}\otimes \psi(a)_{(0)} \in S \otimes B\\
\theta(m) = & \sum_{\theta(m)} \theta(m)_{(1)} \otimes \theta(m)_{(0)} \in V \otimes N.
\end{align}
The claim is that map $\theta$ corresponds to a map
\[
\theta^*:V^* \to \Hom(M, N), \ \ <\theta^* v^*, m> := <v^*,\theta(m)_{(1)}>\theta(m)_{(0)}.
\]
is a measuring map, just as the map $\psi$ corresponds to a measuring map (of algebras)
\[
\psi^* : S^\circ \to \Hom(A,B), \ \ <\psi^*s^*,a> = \sum_{(\psi(a))} <s^*,\psi(a)_{(1)}>\psi(a)_{(0)}
\]
The statement that $\theta$ is a module map written out in Sweedler notation is
\begin{align}
\theta(am) = & \psi(a)\theta(m) \\
	= & \sum_{(\psi(a))(\theta(m))} \psi(a)_{(1)}\theta(m)_{(1)} \otimes \psi(a)_{(0)}\theta(m)_{(0)}.
\end{align}
Then $\theta^*:V^* \to \Hom(M, N)$ measures if
\[
<\theta^*v^*,am> = \sum_{(v^*)} <\psi^*(v^*_{(1)}),a><\theta^*(v^*_{(0)}),m>
\]
With the notation in place, this is a matter of direct calculation:
\begin{align}
<\theta^*v^*,am>  = &  \sum_{(\theta(am))} <v^*,\theta(am)_{(1)}>\theta(am)_{(0)} \\
	= & \sum _{(\psi(a))(\theta(m))} <v^* ,\psi(a)_{(1)}\theta (m)_{(1)} >\psi(a)_{(0)}\theta(m)_{(0)} \\
	= & \sum_{(v^*)(\psi(a))(\theta(m))} <v^*_{(1)},\psi(a)_{(1)}><v^*_{(0)},\theta(m)_{(1)}>\psi(a)_{(0)}\theta(m)_{(0)} \\
	= & \sum_{(v^*)}<\psi^*v^*_{(1)},a><\theta^*v^*_{(0)},m> 
\end{align}
as required.
\subsubsection*{Example: Connections}
A familiar example of measuring comodules arises in differential geometry as connections. Let $A$ be an algebra of (differentiable) functions on a manifold $Y$, and let $Der$ be the Lie algebra of derivations of $A$, that is, the vector fields on $Y$. Then define $H$ to be $Der \oplus \mathbb{R}e$, where $\mathbb{R}e$ is a vector space, and endow $H$ with a coalgebra structure by setting
\[
\triangle e= e\otimes e, \ \  \triangle \eta = \eta \otimes e+ e \otimes \eta, \ \ \epsilon (e) = 1, \ \ \epsilon (\eta) = 0.
\]
for $X$ in $Der$. The linear map $\rho: H \to \Hom_k(A,A)$ defined by
\[
\rho (e) = 1_A, \ \  \rho(X) = \eta \in Der \subset \Hom_k(A,A)
\]
for $X$ in $Der$ measures.

Sections $S$ of a bundle $E$ over $Y$ is just an $A$ module. $H$ can be considered a comodule over itself. 
If $\gamma$ is a connection, $\gamma$ is a linear map 
\[
\gamma: Der  \to \Hom(_AS, _AS).
\]
satisfying
\[
\gamma (X) (as) = (Xa)s + a\gamma(X)s
\]
for $a \in A$, $s \in S$, 
Extend $\gamma$ to all of $H$ by setting $\gamma (e) = 1_S$. Thus
\[
\gamma : H \to \Hom(S,S)
\]
The condition for $\gamma$ to be a connection can be rewritten
\[
<\gamma(X), as> = <\rho(X),a><\gamma(e),s> + <\rho(e),a><\gamma(X),s>
\]
which is just the measuring condition.

\subsection{$\widecheck{Mod}$ and $\widehat{Mod}$}

Just as measuring coalgebras for a fixed pair of algebras $A, \ B$ can be compared by maps of measuring coalgebras, measuring comodules for a pair $_A M, \ _B N$ can be compared.

\begin{definition} \label{def:mapmeascomod}
A map of comodules $\zeta :X \to X'$ is a \emph{map of measuring comodules} if $\zeta$ respects the measuring maps:
\[
\xymatrixcolsep{5pc}
\xymatrix{
X \ar[r] \ar[d]_\zeta & \Hom_k(_AM,_BN) \\
X' \ar[ur] &
}
\]
\end{definition}

The concept of extensions of algebras has a corresponding concept for modules over algebras.  
\begin{definition} \label{def:modext}
If $_AM, \ _BN, \ _ST $ are modules over $A, \ B, S$ respectively, an \emph{extension of modules} is a map 
\[
\zeta: M \to T \otimes N
\]
which is a module map with respect to an extension $\psi: A \to S \otimes B$.  If $_{S'}T'$ is an $S'$ module, and $\zeta: M \to T' \otimes N$ another extension with respect to an algebra extension $\psi':A \to S'\otimes B$, a map of extensions $\zeta \to \zeta'$ consists of a map of (algebra) extensions $\theta: S\to S'$ and a map of $S$ modules $\upsilon:T \to T'$ so that the following diagram commutes.
\[
\xymatrixcolsep{5pc}
\xymatrix{
M \ar[r]^\zeta \ar[dr]_{\zeta'} & T \otimes N\ar[d]^ {\upsilon \otimes 1_N}\\
	& T' \otimes N
}
\]
\end{definition} 

The categories $\widecheck{Alg}$ and $\widehat{Alg}$ have analogues in modules. 
\begin{proposition}
Let $M, \ N$ be modules over algebras $ A, \ B $ respectively.
\begin{enumerate}
\item There is a category $\widecheck{Mod}(M,N)$ whose objects are measuring comodules $X \to \Hom_k(M,N)$ and whose morphisms are maps $\kappa: X \to X'$ of measuring comodules.
\item There is a category $\widehat{Mod}(M,N)$ whose objects are module maps $M \to _ST\otimes N$ for some $\sigma: A \to S\otimes B$ in $\widehat{Alg}(A,B)$ and whose morphisms are maps $\nu:T \to T'$ of extensions.
 \end{enumerate}
\end{proposition}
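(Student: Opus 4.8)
The plan is to verify that the two proposed structures, $\widecheck{Mod}(M,N)$ and $\widehat{Mod}(M,N)$, genuinely satisfy the axioms of a category by checking that composition is well-defined, associative, and admits identities. Since the objects in each case already carry their required compatibility conditions (the commuting triangle in Definition \ref{def:mapmeascomod} and the commuting triangle in Definition \ref{def:modext}), the work reduces to confirming that composition of the underlying linear maps (comodule maps in the first case, pairs $(\theta,\upsilon)$ of algebra-extension-map and module map in the second) preserves these conditions. I would treat the two parts in strict parallel with the corresponding proofs for $\widecheck{Alg}(A,B)$ and $\widehat{Alg}(A,B)$, which have already been established.

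For part 1, first I would take two composable morphisms $\kappa: X \to X'$ and $\kappa': X' \to X''$ of measuring comodules. Each is a comodule map commuting with the respective measuring maps into $\Hom_k({}_AM,{}_BN)$. I would observe that the composite $\kappa' \circ \kappa$ is again a comodule map (composition of comodule maps is a comodule map, a standard fact recorded in the definition of ${}_HComod$), and that the outer triangle commutes because the two inner triangles do: pasting the diagram $X \to \Hom_k(M,N)$, $X' \to \Hom_k(M,N)$, $X'' \to \Hom_k(M,N)$ shows the measuring map from $X$ factors through $X''$. Associativity is inherited from associativity of composition of linear maps, and the identity $1_X$ on any measuring comodule $X$ trivially respects the measuring map, furnishing units. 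This part is essentially formal.

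For part 2, I would likewise take two composable morphisms in $\widehat{Mod}(M,N)$. A morphism consists of a pair $(\theta,\upsilon)$ where $\theta: S \to S'$ is a map of algebra extensions and $\upsilon: T \to T'$ is a map of $S$-modules, subject to the commuting triangle of Definition \ref{def:modext}. Given a second morphism $(\theta',\upsilon')$, I would define the composite to be $(\theta' \circ \theta, \upsilon' \circ \upsilon)$ and verify (a) that $\theta' \circ \theta$ is a map of algebra extensions, which is exactly the content of the category structure on $\widehat{Alg}(A,B)$ already proved in Proposition \ref{prop:bicat}, and (b) that the pasted commuting triangles give the required commuting triangle for the composite extension of modules. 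One subtlety worth flagging is the change of scalars: $\upsilon'$ is a priori a map of $S'$-modules while $\upsilon$ is a map of $S$-modules, so to compose them sensibly one regards $T'$ as an $S$-module via $\theta$, and checks that $\upsilon' \circ \upsilon$ is then an $S$-module map. The identity morphism is the pair $(1_S, 1_T)$.

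The main obstacle, and the only point requiring genuine care rather than diagram-chasing, is precisely this scalar-restriction bookkeeping in part 2: ensuring that the two module maps live over a common algebra so that their composite is a legitimate morphism in the sense of Definition \ref{def:modext}, and that the compatibility of $\upsilon'$ with the module structures transported along $\theta$ holds. Everything else follows, as in Proposition \ref{prop:bicat}, from the corresponding facts for vector spaces and the already-established category structures on $\widecheck{Alg}$ and $\widehat{Alg}$.
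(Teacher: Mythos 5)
Your proposal is correct and takes essentially the same approach as the paper: the paper's proof simply asserts that the procedure is identical to the corresponding verification for algebras (Proposition \ref{prop:bicat}), and your argument is exactly that parallel carried out explicitly. Your attention to the change-of-scalars bookkeeping in part 2 (regarding $T'$ as an $S$-module via $\theta$ so that $\upsilon'\circ\upsilon$ is an $S$-module map) fills in a detail the paper leaves implicit.
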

\begin{proof}
The procedure to prove this is identical to that of the corresponding proof for algebras. Ultimately, as before, the proof rests on the fact that for both $\widecheck{Mod}$ and $\widehat{Mod}$, composition of horizontal maps in both categories is simply the tensor product of comodules and modules respectively.
\end{proof}
\begin{theorem}
$\widecheck{Mod}$ and $\widehat{Mod}$ are bicategories 
\end{theorem}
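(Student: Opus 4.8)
The plan is to run the proof of Proposition \ref{prop:bicat} a second time, one layer up, treating the module data as a tensor factor sitting over the algebra data. The objects of $\widecheck{Mod}$ (respectively $\widehat{Mod}$) are modules $_AM$ over algebras, and the preceding proposition already supplies the hom-categories $\widecheck{Mod}(M,N)$ and $\widehat{Mod}(M,N)$. What remains is to equip each with a horizontal composition bifunctor $\odot$, an identity $1$-cell on each object, and to check the interchange law relating $\odot$ to the vertical composition of $2$-cells, with the associativity and unit constraints inherited from the tensor product of vector spaces.

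For $\widecheck{Mod}$ I would define horizontal composition on $1$-cells as follows. Suppose $\gamma:{}_HX \to \Hom_k(M,N)$ measures with respect to $\rho:H \to \Hom_k(A,B)$, and $\delta:{}_KY \to \Hom_k(N,P)$ measures with respect to $\sigma:K \to \Hom_k(B,C)$. Take the underlying coalgebra $K \otimes H$ with the composed measuring coalgebra structure furnished by Proposition \ref{prop:bicat}, make $Y \otimes X$ into a $K \otimes H$-comodule by the evident shuffle of the two coactions, and define the composite measuring comodule to be
\[
\delta \otimes \gamma : Y \otimes X \to \Hom_k(N,P) \otimes \Hom_k(M,N) \to \Hom_k(M,P).
\]
On $2$-cells, horizontal composition is the tensor product $\upsilon \otimes \zeta$ of the underlying comodule maps of Definition \ref{def:mapmeascomod}. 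For $\widehat{Mod}$ the composition is built by precomposition along the two composable extensions: given module extensions $\zeta:M \to T \otimes N$ over $\psi:A \to S \otimes B$ and $\xi:N \to U \otimes P$ over $\phi:B \to U \otimes C$ as in Definition \ref{def:modext}, the composite is $M \xrightarrow{\zeta} T \otimes N \xrightarrow{1 \otimes \xi} T \otimes U \otimes P \cong (T \otimes U) \otimes P$, a module map over the composite algebra extension of Proposition \ref{prop:bicat}. The identity $1$-cell on $(A,M)$ is, in the first case, the one-dimensional measuring comodule $kf$ over the unit coalgebra $ke$ with $\gamma(f)=1_M$ (the Module maps example), and in the second case the canonical isomorphism $M \cong k \otimes M$ over the unit algebra extension.

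The one genuinely new verification — and the step I expect to be the main obstacle — is that $\delta \otimes \gamma$ measures with respect to the composed measuring coalgebra; equivalently, by Proposition \ref{prop:meascomod}, that the transpose $_AM \to [Y \otimes X, P]$ is a map of $A$-modules. This is the nested Sweedler-notation computation in which one first expands $\gamma(x)(am)$ using that $\gamma$ measures over $\rho$, producing terms of the form $\rho(x_{(1)})(a)\cdot\gamma(x_{(0)})(m)$ in which an element of $B$ acts on $N$, and then applies $\delta$ to each such term using that $\delta$ measures over $\sigma$. The coaction of $K$ on $Y$ together with $\sigma$ records the $B$-module structure on $N$, the coaction of $H$ on $X$ together with $\rho$ records the $A$-module structure on $M$, and the two measuring identities compose through the intermediate module $N$ exactly as the algebra measuring identities composed through $B$ in Proposition \ref{prop:bicat}. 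Once this is in hand everything else is formal: horizontal composition is the tensor product of comodules (respectively modules), so associativity and the unit laws hold up to the canonical associator and unitor isomorphisms of vector spaces, and the interchange law
\[
(\upsilon_2 \circ \upsilon_1)\odot(\zeta_2\circ\zeta_1) \cong (\upsilon_2\odot\zeta_2)\circ(\upsilon_1\odot\zeta_1)
\]
holds for precisely the reason given in Proposition \ref{prop:bicat}, being the interchange law for tensor products of linear maps, with the pentagon and triangle coherences inherited from the coherence of $\otimes$ on vector spaces.
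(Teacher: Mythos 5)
Your proposal is correct and takes essentially the same approach as the paper: the paper's entire proof is a one-line remark that the argument is parallel to Proposition \ref{prop:bicat} for algebras, resting on the fact that horizontal composition in both $\widecheck{Mod}$ and $\widehat{Mod}$ is just the tensor product of comodules and modules respectively, which is precisely the construction you spell out (including the key nested measuring computation through the intermediate module $N$, which the paper leaves implicit). One minor notational slip: in the $\widehat{Mod}$ composition the second extension $\xi: N \to U \otimes P$ should be a module map relative to an algebra extension $\phi: B \to S' \otimes C$ where $U$ is an $S'$-module; you wrote the module $U$ itself as the target algebra.
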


Again, the proof is parallel to the proof for the corresponding statement for algebras.

The example \ref{example:extmod} enables enables a comparison of the subcategories $\widecheck{Mod}_f$ and $\widehat{Mod}_f$ where the morphisms are restricted to finite dimensional measuring comodules and finite dimensional modules $_SU$ respectively. This comparison is made formal in the following equivalence.

\begin{proposition}
There is a natural equivalence of categories
\[
*: \widecheck{Mod}_f \cong \widehat{Mod}_f^{co}:*
\]
\end{proposition}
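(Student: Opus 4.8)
The plan is to mirror the proof of the algebra-level equivalence of Theorem~\ref{thm:dual}, replacing the dual-coalgebra/dual-algebra correspondence by the dual-comodule/dual-module correspondence and using the finiteness hypotheses to make linear duality an honest equivalence. The decisive computation has in fact already been carried out in Example~\ref{example:extmod}, which shows that a finite-dimensional module extension $\theta:{}_AM \to {}_SV \otimes {}_BN$, taken over an algebra extension $\psi:A \to S \otimes B$, dualises to a measuring comodule $\theta^*:V^* \to \Hom_k(M,N)$ over the measuring coalgebra $\psi^*:S^\circ \to \Hom_k(A,B)$. So most of the content of the proposition is the verification that this assignment is functorial and invertible, together with the bicategorical compatibilities.

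First I would fix a pair of modules ${}_AM$, ${}_BN$ and argue hom-category by hom-category, so that the claim becomes an equivalence $\widecheck{Mod}_f(M,N) \cong \widehat{Mod}_f(M,N)^{\mathrm{op}}$, the opposite recording the $\mathrm{co}$ that reverses $2$-cells. On objects, linear duality sends a finite-dimensional measuring comodule $\gamma:{}_HX \to \Hom_k(M,N)$ to its dual; since $X$ and $H$ are finite dimensional, the comodule structure $\triangle_X:X \to H \otimes X$ dualises to an $H^*$-action on $X^*$, and Proposition~\ref{prop:meascomod} together with the dualised calculation of Example~\ref{example:extmod} identifies the measuring condition with the statement that the dual map $M \to X^* \otimes N$ is a module extension over the algebra extension dual to the base measuring coalgebra $\rho$. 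Running Example~\ref{example:extmod} in the reverse direction supplies the inverse assignment.

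On morphisms I would check that a map of module extensions in the sense of Definition~\ref{def:modext} — a pair consisting of a map $\theta:S \to S'$ of algebra extensions and a map $\upsilon:T \to T'$ of $S$-modules — dualises to a map of measuring comodules in the sense of Definition~\ref{def:mapmeascomod}, the arrow $\upsilon^*:T'^* \to T^*$ pointing the opposite way, which is exactly the reversal recorded by $\mathrm{op}$. Because every space in sight is finite dimensional, the natural isomorphism $V \cong V^{**}$ shows the correspondence is a bijection on morphisms and essentially surjective on objects, giving the equivalence of hom-categories. To promote this to an equivalence of bicategories I would argue, as in Proposition~\ref{prop:bicat} and Theorem~\ref{thm:dual}, that horizontal composition on both sides is nothing but the tensor product (of comodules, respectively of modules), and that duality commutes with tensor products of finite-dimensional spaces; compatibility with composition then reduces to the same statement for vector spaces.

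The step I expect to be the main obstacle is not any single calculation but the bookkeeping of base data. Every object of $\widecheck{Mod}_f(M,N)$ sits over a measuring coalgebra $\rho:H \to \Hom_k(A,B)$ and every object of $\widehat{Mod}_f(M,N)$ over an algebra extension $\psi:A \to S \otimes B$, and the module-level duality must be made to lie over the algebra-level duality of Theorem~\ref{thm:dual} in a coherent way. Verifying that the dualisation of the module data is compatible with the dualisation of the underlying coalgebra/algebra data — that the squares relating the two levels commute — is the delicate part; once the finiteness hypotheses are invoked it follows the identical pattern to the algebra case, but the indexing over varying base structures is where care is needed.
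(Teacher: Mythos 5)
Your proposal is correct and follows essentially the same route as the paper, whose entire proof is the remark that it ``follows the same steps as Theorem~\ref{thm:dual}'': dualise finite-dimensional measuring comodules to module extensions (the computation of Example in Section 4.2 on modules of extensions), use $V \cong V^{**}$ to invert, and reduce the bicategorical compatibilities to tensor products of vector spaces. Your explicit attention to the coherence between the module-level duality and the underlying algebra-level duality of Theorem~\ref{thm:dual} is exactly the bookkeeping the paper leaves implicit, so your write-up is if anything more complete than the original.
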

The proof follows the same steps as \ref{thm:dual}.

\subsection{Universal measuring comodules}
Comodules share the same profound finiteness property that coalgebras have: \emph{every element of a comodule is an element of a finite dimensional subcomodule} (Sweedler, Chapter II \cite{Swee1969}).  As with coalgebras, a consequence is that the category of comodules over a given coalgebra has colimits and coequalisers, ensuring the existence of universal measuring comodules.

\begin{definition}
Let $_AM, \ _BN$ be algebras. The \emph{universal measuring comodule} $Q(M,N)$ is the $P(A,B)$ comodule with a measuring map $\psi: Q(M,N) \to \Hom_k(M,N)$ with the property that if $\gamma:X\to \Hom_k(M,N)$ is any measuring map over any measuring coalgebra $\rho: H \to \Hom_k(A,B)$, then there is a unique map of measuring comodules  $Q(\gamma): X \to Q(M,N)$ such that the following diagram commutes.
\[
\xymatrixcolsep{5pc}
\xymatrix{
Q(M,N) \ar[r]^\psi & \Hom_k(M,N) \\
X\ar[ur]_\gamma \ar[u]^{Q(\gamma)}&
}
\]

\end{definition}

Notice that by the universal property for the universal measuring coalgebra, any measuring comodule over a measuring coalgebra $H$ can be considered as a measuring comodule over $P(A,B)$.  

$Q(M,N)$ exists. The finiteness properties of comodules enable $Q(M,N)$ to be constructed as a coequaliser in the same manner as the proof outlined in \ref{prop:P(A,B)exists}. A more detailed study of these structures can be found in \cite{hyland2017hopf}.

The fundamental property of the universal measuring comodule is the analogue of proposition \ref{prop:natequiv}.

\begin{proposition}\label{prop:natequivmod}
Let $M, \ N$ be modules over algebras $A, \  B$ respectively.
\[
_{P(A,B)}Comod(-, Q(M,N)) \cong _AMod(M, [-,N]).
\]

\end{proposition}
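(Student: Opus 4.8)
The plan is to prove the stated natural equivalence exactly as in the proof of Proposition \ref{prop:natequiv} for coalgebras, by exhibiting both functors as naturally isomorphic to a common intermediate functor: the functor sending a $P(A,B)$-comodule $X$ to the set of measuring maps $X \to \Hom_k(M,N)$, where ``measures'' is taken with respect to the fixed universal measuring map $\pi: P(A,B) \to \Hom_k(A,B)$ (which makes every $P(A,B)$-comodule into a measuring comodule). Throughout, the $A$-module structure on $[X,N]=\Hom_k({}_{P(A,B)}X,{}_BN)$ is the one supplied by the proof of Proposition \ref{prop:meascomod}: it is a module over the convolution algebra $[P(A,B),B]$, pulled back to $A$ along the algebra homomorphism $A\to[P(A,B),B]$ corresponding to $\pi$. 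Since the functor's argument is already a $P(A,B)$-comodule, no corestriction is needed and the universal property of $Q(M,N)$ can be applied directly with $H=P(A,B)$ and $\rho=\pi$.

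First I would identify the left-hand side with this intermediate set using the universal property of $Q(M,N)$. Given a $P(A,B)$-comodule map $\phi:X\to Q(M,N)$, the composite $\psi\circ\phi:X\to\Hom_k(M,N)$ is a measuring map, since $\psi$ measures and $\phi$ is a comodule map; conversely, a measuring map $\gamma:X\to\Hom_k(M,N)$ yields the comodule map $Q(\gamma):X\to Q(M,N)$ characterised by $\psi\circ Q(\gamma)=\gamma$. These assignments are mutually inverse: $\psi\circ Q(\gamma)=\gamma$ holds by construction, while for a comodule map $\phi$ the measuring map $\psi\circ\phi$ makes $\phi$ itself a map of measuring comodules, so the uniqueness clause forces $Q(\psi\circ\phi)=\phi$. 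This gives a bijection ${}_{P(A,B)}Comod(X,Q(M,N))\cong\{\text{measuring maps } X\to\Hom_k(M,N)\}$.

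Next I would identify the right-hand side with the same set, which is precisely the content of Proposition \ref{prop:meascomod}: a linear map $\gamma:{}_{P(A,B)}X\to\Hom_k({}_AM,{}_BN)$ measures if and only if the adjoint map ${}_AM\to\Hom_k({}_{P(A,B)}X,{}_BN)=[X,N]$ is a map of $A$-modules. Composing the two bijections yields ${}_{P(A,B)}Comod(X,Q(M,N))\cong{}_AMod(M,[X,N])$ for each $X$.

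The remaining, and most delicate, point is naturality in $X$: for a comodule map $\chi:X\to X'$ one must check that precomposition with $\chi$ on the left corresponds under the bijection to postcomposition with the $A$-module map $[\chi,N]:[X',N]\to[X,N]$ on the right. As in Proposition \ref{prop:natequiv} this is not really a new argument, since both bijections are assembled from the underlying vector-space identifications $\Hom_k(M,\Hom_k(X,N))\cong\Hom_k(X\otimes M,N)\cong\Hom_k(X,\Hom_k(M,N))$, which are natural in $X$, and the passage $\gamma\mapsto Q(\gamma)$ is natural by uniqueness in the universal property of $Q(M,N)$. I expect the only genuine care needed is the bookkeeping that the $A$-module structures on both sides are pulled back along the single map $\pi$, so that the three descriptions of the intermediate set are literally the same set rather than merely isomorphic; once that is pinned down, the equivalence is forced.
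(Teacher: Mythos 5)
Your proposal is correct and takes essentially the same route as the paper: the paper's (much terser) proof likewise equips $[X,N]$ with the $[P(A,B),B]$-module structure by convolution, pulls it back to $A$ along the evaluation homomorphism $A \to [P(A,B),B]$, and then appeals to the measuring correspondence of Proposition \ref{prop:meascomod} together with the universal property of $Q(M,N)$, asserting that naturality presents no exceptional problems. Your write-up simply makes explicit the two bijections and the uniqueness argument that the paper leaves implicit.
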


\begin{proof}
Let $X$ be a $P(A,B)$ comodule. First observe that $[X,N]$ is a $[P(A,B),B]$ module. For $\alpha \in [P(A,B),B]$ and $\nu \in [X,N]$, define the action by setting 
\[
<\alpha * \nu, x> = \sum_{x} <\alpha, x_{(1)}><\nu,x_{(0)}>.
\]
Then $[X,N]$ becomes an $A$ module in the usual way via the homomorphism $A \to [P(A,B),B]$ of theorem \ref{thm:F(A,B)}. As with the corresponding statement for algebras (proposition \ref{prop:natequiv}), naturality and equivalence must be checked, but present no exceptional problems.

\end{proof}

The universal measuring comodule enables the definition of a global enriched category of modules.
\begin{proposition}
There is an enriched category $\underline{Mod}$ whose objects are modules and whose morphisms are the universal enveloping comodules $Q(M,N)$.
\end{proposition}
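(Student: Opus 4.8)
The plan is to construct $\underline{Mod}$ by exact analogy with $\underline{Alg}$ and $\underline{Coalg}$, using the universal property of $Q(M,N)$ to supply composition, units, and associativity, with the enrichment taken over the symmetric monoidal category $\underline{Coalg}$ (equivalently, over comodules, with tensor product as monoidal structure). First I would specify the base of enrichment: since each hom-object $Q(M,N)$ is a $P(A,B)$-comodule and these vary with the pair $(A,B)$ of underlying algebras, the natural setting is a category enriched in the bicategory $\underline{Coalg}$, so that $\underline{Mod}(_AM,_BN) := Q(M,N)$ lives in $_{P(A,B)}Comod$. The identity on $_AM$ is the measuring comodule $E_M \to \Hom_k(M,M)$ sending the distinguished element to $1_M$, sitting over the unit measuring coalgebra $E \to \Hom_k(A,A)$ of Proposition \ref{prop:bicat}; its universal factorisation through $Q(M,M)$ picks out the identity morphism.

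The central step is composition. Given $_AM, \ _BN, \ _CL$, I would produce a map
\[
Q(N,L) \odot Q(M,N) \to Q(M,L),
\]
where $\odot$ is the tensor product of comodules over the composite coalgebra $P(B,C)\odot P(A,B)$, which maps to $P(A,C)$ by the horizontal composition in $\widecheck{Alg}$. The key observation, exactly mirroring the horizontal composition in the proof of Proposition \ref{prop:bicat}, is that if $\gamma:X \to \Hom_k(N,L)$ and $\delta:Y \to \Hom_k(M,N)$ measure over $\psi:K \to \Hom_k(B,C)$ and $\rho:H \to \Hom_k(A,B)$ respectively, then the composite
\[
X\otimes Y \to \Hom_k(N,L)\otimes \Hom_k(M,N) \to \Hom_k(M,L)
\]
measures over $K\otimes H \to \Hom_k(A,C)$. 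Applying this to $\gamma = \psi_{NL}$, $\delta = \psi_{MN}$ (the universal measuring comodules themselves) yields a measuring comodule $Q(N,L)\otimes Q(M,N)$ over $P(B,C)\otimes P(A,B)$, and hence over $P(A,C)$ via the canonical coalgebra map; its universal factorisation through $Q(M,L)$ is the required composition. Associativity and the unit laws then follow from the uniqueness clause in the universal property of $Q(M,L)$, precisely as the uniqueness of the universal measuring coalgebra forces associativity in $\underline{Alg}$ in the earlier proposition; I would verify each coherence equation by checking that both sides factor the same measuring comodule, so the universal map coincides.

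The main obstacle I anticipate is bookkeeping the enrichment over a varying base rather than over a fixed symmetric monoidal category: because $Q(M,N)$ is a comodule over the pair-dependent coalgebra $P(A,B)$, the composition map must be shown compatible with the horizontal composition law of the bicategory $\widecheck{Alg}$, so that $\underline{Mod}$ is genuinely fibred over $\underline{Alg}$ (this is the structural content flagged in the introduction). Concretely, the hard part is confirming that the canonical comparison $P(B,C)\odot P(A,B)\to P(A,C)$ interacts coherently with the comodule composition, i.e. that the measuring comodule structures transport correctly along it; once this compatibility is in place, the associativity and unit diagrams reduce, as everywhere in this paper, to the corresponding identities for tensor products of vector spaces together with the uniqueness in the defining universal property of $Q(-,-)$.
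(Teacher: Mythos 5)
Your proposal is correct and takes essentially the same approach as the paper: the paper also constructs composition by tensoring the universal measuring comodules, mapping $Q(N,L)\otimes Q(M,N) \to \Hom(N,L)\otimes\Hom(M,N)\to\Hom(M,L)$, endowing the tensor with a $P(A,C)$-comodule structure via the universal map $P(B,C)\otimes P(A,B)\to P(A,C)$, checking that this composite measures, and invoking the universal property of $Q(M,L)$, with associativity and units following from uniqueness. Your write-up simply makes explicit what the paper leaves implicit (the identity element via the one-dimensional comodule and the coherence checks).
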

\begin{proof}
The slightly uncomfortable fact that requires verification is that an element $\alpha$ in $Q(_AL,\ _BM)$ can compose with an element $\beta$ in $Q(_BM,\ _CN)$ to give an element in $Q(_AL,\ _CN)$, but that is a consequence of the universal property, as follows.  Tensoring followed by composition provides a map
\[
Q(M,N) \otimes  Q(L, M) \to \Hom(M,N) \otimes \Hom(L,M) \to \Hom (L,N)
\]
By the universal map $P(B,C) \otimes P(A,B) \to P(A,C)$, $Q(M,N) \otimes  Q(L, M)$ can be considered a $P(A, C)$ comodule. That the composition of maps above is a measuring map can be verified directly.
\end{proof}

Thus $\underline{Mod}$ is a global category of modules: a map between an $A$ module $M$ and a $B$ module $N$ is simply an element of the universal measuring comodule $Q(M,N)$; there is no need to specify an algebra homomorphism relating $A$ and $B$.

This global category is fibred over a 2-category $\widecheck{Alg1}$ of algebras adapted from the bicategory $\widecheck{Alg}$ as follows. 

First, for algebras $A, \ B$ define a category $\mathcal{P}(A,B)$ whose objects are are subcoalgebras of the measuring coalgebra $P(A,B)$ and whose morphisms are inclusions, that is, if $H_1, \ H_2$ are subcoalgebras of $P(A,B)$ the morphism set $\mathcal{P}(A,B)(H_1, H_2)$ contains a single element, $i_{12}$ if $H_1 \subset H_2$ or is the empty set. Composition of morphisms, the associativity and unit properties are immediate. 

For the bicategory $\widecheck{Alg1}$, objects are algebras and the set of morphisms $\widecheck{Alg1}(A,B)$ is $\mathcal{P}(A,B)^{op}$, 

Composition $H_2\circ H_1$ in $\widecheck{Alg1}$ is defined by setting 
\[
H_2 \circ H_1 = P(i_2 \otimes i_1)(H_2 \otimes H_1)
\]
the image of $H_2 \otimes H_1$ in $P(A_1, A_3)$.

\begin{proposition}
$\widehat{Alg1}$ defined above is a 2 category.
\end{proposition}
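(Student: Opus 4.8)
The plan is to verify the axioms of a strict 2-category directly, exploiting the fact that each hom-category of the structure $\widecheck{Alg1}$ constructed above, namely $\widecheck{Alg1}(A,B) = \mathcal{P}(A,B)^{op}$, is \emph{thin}: between any two subcoalgebras of $P(A,B)$ there is at most one 2-cell, a reversed inclusion. This thinness is the structural feature that upgrades the generic bicategory coherences to strict equalities, so the bulk of the work is to (i) show that horizontal composition is a well-defined bifunctor, and (ii) show that the resulting associativity and unit constraints are genuine identities rather than merely coherent isomorphisms.

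First I would record that each $\widecheck{Alg1}(A,B)$ is a category: its objects are the subcoalgebras of $P(A,B)$ and its morphisms are the reversed inclusions, so it is the opposite of the inclusion poset $\mathcal{P}(A,B)$, and the category axioms are immediate. Next I would establish that horizontal composition is a functor $\widecheck{Alg1}(B,C)\times\widecheck{Alg1}(A,B)\to\widecheck{Alg1}(A,C)$. On objects it sends $(H_2,H_1)$ to $H_2\circ H_1 := c_{A,B,C}(H_2\otimes H_1)$, the image of the subcoalgebra $H_2\otimes H_1\subseteq P(B,C)\otimes P(A,B)$ under the universal composition map $c_{A,B,C}:P(B,C)\otimes P(A,B)\to P(A,C)$ supplied by the enriched category $\underline{Alg}$. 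Since the image of a subcoalgebra under a coalgebra map is again a subcoalgebra, $H_2\circ H_1$ is indeed an object of $\widecheck{Alg1}(A,C)$. On 2-cells, functoriality reduces to monotonicity of the image: if $H_1\subseteq H_1'$ and $H_2\subseteq H_2'$ then $c_{A,B,C}(H_2\otimes H_1)\subseteq c_{A,B,C}(H_2'\otimes H_1')$, and thinness of the target then forces preservation of identities and composites automatically.

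For the unit, I would take the identity 1-cell at $A$ to be the subcoalgebra $U_A\subseteq P(A,A)$ obtained as the image of the unit measuring coalgebra $E\to P(A,A)$ (the image of $e\mapsto 1_A$). The strict unit laws $H\circ U_A = H$ and $U_B\circ H = H$ then follow from the unit axioms satisfied by $c$ in $\underline{Alg}$, read off at the level of images. Likewise, strict associativity $(H_3\circ H_2)\circ H_1 = H_3\circ(H_2\circ H_1)$ as subcoalgebras of $P(A,D)$ follows from the associativity of the family $c_{-,-,-}$, using that the canonical associator of the underlying tensor products identifies the two iterated images on the nose.

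The main obstacle, and the step deserving genuine care, is precisely this passage from the associativity and unitality of the enriched composition $c$ to \emph{strict equality} of the image subcoalgebras. In a general bicategory these would hold only up to coherent isomorphism, but here the hom-categories are posets, so any comparison isomorphism between $(H_3\circ H_2)\circ H_1$ and $H_3\circ(H_2\circ H_1)$ is forced to be the identity once the two are shown to be the same subspace of $P(A,D)$. I would therefore check that the canonical associativity and unit isomorphisms of the tensor product, transported through $c$, carry one iterated image exactly onto the other; once this equality of subcoalgebras is established, the associators and unitors are identities, the middle-four interchange law holds automatically from bifunctoriality of $\circ$, and $\widecheck{Alg1}$ is a strict 2-category.
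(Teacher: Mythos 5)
Your proposal is correct and follows essentially the same route as the paper: composition is the image of $H_2\otimes H_1$ under the universal map into $P(A_1,A_3)$, strict associativity is obtained by showing the two iterated images coincide as subcoalgebras (the paper does this by an element chase using that every element of $H_3\circ H_2$ lifts to $H_3\otimes H_2$, together with uniqueness of the universal map, which is exactly your "transport through $c$" argument), and the interchange law comes for free from monotonicity of composition in the thin hom-categories. Your treatment is if anything slightly more complete, since you also spell out the unit 1-cells, which the paper's proof leaves implicit.
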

\begin{proof}
Associativity can be checked by inspecting the following diagram.
Let $H_i:A_i \to A_{I+1}, i = 1,\ 2, \ 3$ be morphisms (subcoalgebras of $P(A_i, A_{i+1})$). 
Consider the diagram
\[
\xymatrixcolsep{5pc}
\xymatrix{
H_3\otimes H_2 \otimes H_1 \ar[r] \ar[d] & H_3 \otimes H_2\circ H_1 \ar[d]\\
H_3 \circ H_2 \otimes H_1 \ar[r] & P(A_1,A_3)
}
\]
The critical observation is that any element in $H_3\circ H_2 \subset P(A_2, A_4)$ is in the image of an element in $H_3 \otimes H_2$. Therefore for $k \in H_3\circ H_2 \otimes H_1$, there is an element $h \in H_3\otimes H_2 \otimes H_1$ with $q(h) = k$. By the uniqueness of the universal map, it must be that $s(k) = r(p(h))$. Thus $s(H_3\circ H_2 \times H_1) \subset r(H_3 \otimes H_2 \circ H_1)$.  Similarly $r(H_3 \otimes H_2\circ H_1) \subset s(H_3\circ H_2 \times H_1) $.

The interchange law requires more unravelling than proof. It is satisfied since if $H_i, \subset  J_i  \subset K_i$ are subcoalgebras of $P(A_i, A_{i+1})$ the composition of inclusions $H_2\circ H_1 \subset J_2 \circ J_1 \subset K_2 \circ K_1$ is just the composition $H_2\circ H_1 \subset K_2 \circ K_1 = (H_2 \subset J_2 \subset K_2) \circ (H_1 \subset J_1 \subset k_1)$.
\end{proof}
The fibred structure of $\underline{Mod}$ can now be given.

\begin{proposition}
There is a functor $\Pi: \underline{Mod} \to \widecheck{Alg1} $ giving $\underline{Mod}$ the structure of a fibred category.
\end{proposition}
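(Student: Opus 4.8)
The plan is to build $\Pi$ from the two data already in hand: the coefficient-coalgebra construction for comodules, and the universal property of $Q(M,N)$ recorded in Proposition \ref{prop:natequivmod}. On objects, set $\Pi({}_AM)=A$. On morphisms, recall that a morphism $\alpha\colon M\to N$ in $\underline{Mod}$, with $M$ over $A$ and $N$ over $B$, is an element $\alpha\in Q(M,N)$, and $Q(M,N)$ is a $P(A,B)$-comodule. By the finiteness property of comodules, $\alpha$ lies in a finite-dimensional subcomodule, and I would define $\Pi(\alpha)=H_\alpha\subset P(A,B)$ to be its \emph{coefficient subcoalgebra}: the span of the left tensor factors appearing in a minimal expression for $\triangle\alpha=\sum \alpha_{(1)}\otimes\alpha_{(0)}$, with $\alpha_{(1)}\in P(A,B)$. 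Coassociativity of the coaction guarantees $H_\alpha$ is a genuine subcoalgebra, hence a $1$-morphism $A\to B$ in $\widecheck{Alg1}$. Thus $\Pi$ records, for each generalised module map, exactly the piece of $P(A,B)$ over which it measures.

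The next step is functoriality. The identity morphism of ${}_AM$ is the grouplike element of $Q(M,M)$ corresponding under Proposition \ref{prop:natequivmod} to $1_M$; its coefficient coalgebra is the grouplike $kg\subset P(A,A)$ representing the unit $1_A$ of $\widecheck{Alg1}$, so $\Pi$ preserves identities. For composition, the enriched composite of $\alpha\in Q(L,M)$ and $\beta\in Q(M,N)$ is the image of $\beta\otimes\alpha$ under the comodule map $Q(M,N)\otimes Q(L,M)\to Q(L,N)$ constructed in the proof that $\underline{Mod}$ is enriched; that map is a comodule morphism over the universal composition $P(B,C)\otimes P(A,B)\to P(A,C)$. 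Consequently the coefficient coalgebra of the composite is the image $P(i\otimes i)(H_\beta\otimes H_\alpha)$, which is precisely $H_\beta\circ H_\alpha$ as defined for $\widecheck{Alg1}$. I expect $\Pi$ to be a pseudofunctor rather than a strict one, preserving composition only up to the canonical isomorphisms inherited from the tensor product (exactly the $\cong$ appearing in the associativity and interchange arguments for $\widecheck{Alg1}$), and I would state it as such.

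For the fibred structure I would exhibit cartesian lifts directly. Given a $1$-morphism $H\colon A\to B$ (a subcoalgebra $H\subset P(A,B)$) and an object $N$ over $B$, set the lift to be $H^{*}N:=[H,N]=\Hom_k({}_HH,{}_BN)$, made into an $A$-module through the homomorphism $A\to[H,B]$ coming from the measuring $H\hookrightarrow P(A,B)\to\Hom_k(A,B)$, exactly as in Proposition \ref{prop:meascomod}. Viewing $H$ as a comodule over itself, Proposition \ref{prop:natequivmod} gives
\[
{}_{P(A,B)}Comod(H,Q([H,N],N))\cong{}_AMod([H,N],[H,N]),
\]
and the cartesian morphism $\bar H\colon H^{*}N\to N$ is the element of $Q([H,N],N)$ corresponding to the identity; its coefficient coalgebra is $H$, so $\bar H$ lies over $H$. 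The fibre over $A$ is then the ordinary category of $A$-modules and $A$-module maps, since morphisms over the unit $kg\subset P(A,A)$ are exactly the grouplike-coinvariant elements, which by the module-map example of Section 4 are module homomorphisms. The cartesian universal property — that any $\beta\colon L\to N$ whose projection factors as $H\circ K$ factors uniquely through $\bar H$ by a morphism over $K$ — would follow by transporting the factorisation across the adjunctions of Proposition \ref{prop:natequivmod} and Theorem \ref{thm:actions}, where it becomes the defining universal property of $[H,N]$.

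The hard part will be two intertwined bookkeeping issues rather than any single deep estimate. First, functoriality must be reconciled with the fact that composition in $\widecheck{Alg1}$ is taken as an \emph{image} under the universal map; I must check that the coefficient coalgebra of a composite equals that image on the nose and not merely up to the coherence isomorphism, and isolate precisely where the pseudofunctor structure enters. Second, and more delicate, is ensuring the cleavage $\{[H,N]\}$ is compatible with the $2$-cells of $\widecheck{Alg1}$: an inclusion $H\subset H'$ must induce a coherent comparison $H^{*}N\to (H')^{*}N$ of lifts, so that what is produced is a fibration over the genuine $2$-category and not merely over its underlying $1$-category. Verifying this compatibility, and the uniqueness half of the cartesian property, is where the substantive work lies.
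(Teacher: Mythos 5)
Your definition of $\Pi$ itself agrees with the paper's: objects go to their underlying algebras, and an element $\alpha\in Q(M,N)$ goes to the coefficient subcoalgebra of the finite-dimensional subcomodule it generates (the paper phrases this as the minimal subcoalgebra of $P(A,B)$ over which the minimal subcomodule containing $\alpha$ is a comodule). The identity case is also handled the same way. The first genuine gap is in functoriality: you assert that the coefficient coalgebra of a composite $\beta\circ\alpha$ \emph{equals} the image $P(i_2\otimes i_1)(H_\beta\otimes H_\alpha)=H_\beta\circ H_\alpha$, and then frame $\Pi$ as a pseudofunctor preserving composition up to canonical isomorphism. Neither claim is right. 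Minimality gives only the containment $\Pi(\beta\circ\alpha)\subseteq H_\beta\circ H_\alpha$, and this containment can be proper: the minimal subcomodule containing the single element which is the image of $\beta\otimes\alpha$ under $Q(M_2,M_3)\otimes Q(M_1,M_2)\to Q(M_1,M_3)$ can be much smaller than the image of the whole subcomodule $D_\beta\otimes D_\alpha$ (composites can degenerate, even to $0$). So $\Pi$ is not a pseudofunctor but a genuinely lax one: the paper obtains only a natural transformation $\Pi(-)\circ\Pi(-)\to\Pi(-\circ-)$, and this one-directional comparison is precisely why $\widecheck{Alg1}(A,B)$ is taken to be $\mathcal{P}(A,B)^{op}$ rather than $\mathcal{P}(A,B)$ — the inclusion must point the right way. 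Your ``hard part'' paragraph senses this tension but resolves it in the wrong direction.

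The second gap is that your cartesian lift does not typecheck. Given $H\subseteq P(A,B)$ and $N$ over $B$, you set $H^{*}N=[H,N]$ and define the cartesian morphism as ``the element of $Q([H,N],N)$ corresponding to the identity'' under
\[
{}_{P(A,B)}Comod(H,Q([H,N],N))\cong{}_{A}Mod([H,N],[H,N]).
\]
But the identity of $[H,N]$ corresponds under this bijection to a \emph{comodule map} $H\to Q([H,N],N)$ (concretely $h\mapsto\mathrm{ev}_h$), not to an element of $Q([H,N],N)$. To extract an element you would have to choose a point of $H$; there is no canonical choice, and for a bad choice (e.g.\ a grouplike in a larger $H$) the coefficient coalgebra of $\mathrm{ev}_h$ is strictly smaller than $H$, so the resulting morphism would not even lie over $H$. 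The paper's construction avoids exactly this problem by using the tensor-style pullback $d^{*}(N)=\Pi(d)\otimes N$ (with $A$ acting through $\triangle$ on $\Pi(d)$ followed by the measuring evaluation): that module \emph{does} carry a canonical morphism to $N$. One makes $\Pi(d)^{*}$ into a $\Pi(d)$-comodule and checks that $\Pi(d)^{*}\to\Hom(\Pi(d)\otimes N,N)$, $\beta\mapsto\langle\beta,-\rangle\otimes 1_N$, measures; the counit $\epsilon\in\Pi(d)^{*}$ then yields an element $e\in Q(d^{*}(N),N)$ whose coaction is $\sum_t k_t\otimes k_t^{*}$ over a basis $\{k_t\}$ of $\Pi(d)$, so $\Pi(e)=\Pi(d)$ exactly. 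The paper also organizes the fibration per morphism rather than per base $1$-cell: each $d\in Q(M,N)$ is factored as $e\circ j$ with $j:M\to d^{*}(N)$ a vertical $A$-module map and $e$ the canonical element above. Your per-$H$ cleavage is closer to the textbook definition of a fibration, but as written it is missing the key canonical element, which is the crux of the whole construction.
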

\begin{proof}
The steps in the proof are as follows.
\begin{enumerate}
	\item Construct the projection functor $\Pi: \underline{Mod} \to \widecheck{Alg1}$, and verify that it is a functor.
	\item For an element $d\in Q(M,N)$ construct the "pullback" $d^*(N)$ of $d$.
	\item Construct an $A$ module map $j: M \to d^*(N)$
	\item Construct an element $e: d^*(N) \to N$ with $\Pi (e) = \Pi(d)$ and $e \circ j = d$.
\end{enumerate}
1. \emph{The functor $\Pi$.} For an element $d$ in $Q(M,N)$ let $D$ be the minimal measuring subcomodule containing $d$, (which will be finite dimensional) and let $\Pi(d)$ be the minimal subcoalgebra of $P(A,B)$ over which $D$ is a subcomodule.  $\Pi(d)$ is also finite dimensional. 

$\Pi$ is functorial in the sense that that there is a natural transformation, 
\[
\kappa: \Pi (-) \circ \Pi(-) \to \Pi(-\circ -).
\]
(It is at this point that we need the morphism to be $\mathcal{P}(A,B)^{op}$ rather than $\mathcal{P}(A,B)$.) This is a consequence of uniqueness in the universal property of $Q(-,-)$ as follows.  By their respective universal properties there are unique maps
\[
Q(M_2, M_3) \otimes Q(M_1, M_2) \to Q(M_1, M_3), \ \ P(A_2, A_3) \otimes P(A_1, A_2) \to P(A_1, A_3)
\]
The image of $\Pi(d_2) \otimes \Pi (d_1)$ is a measuring subcoalgebra of $P(A_1, A_3)$ and 
\[
\triangle (d_2 \circ d_1) = \triangle (d_2) \otimes \triangle (d_1).
\]
Thus $\Pi(d_2) \otimes \Pi (d_1) \to \Hom(A_1, A_3)$ measures and its imagine in $P(A_1,A_3)$ is $\Pi(d_2) \circ \Pi (d_1)$.  But computing $\triangle (d_2 \circ d_1)$ as above, it must be that $\Pi(d_2 \circ d_1 ) \subset P(\Pi(d_2) \otimes \Pi(d_1)$ since $\Pi(d_2 \circ d_1)$ is minimal, or otherwise put, there is a natural transformation
\[
\Pi(d_2)\circ \Pi(d_1) \to \Pi(d_2 \circ d_1)
\]
as required.

2. \emph{The pullback of $d$}. Define $d^*(N) = : \Pi(d)\otimes N$.  The $A$ action on $\Pi(d)\otimes N$ is given by
\[
\xymatrixcolsep{3pc}
\xymatrix{
A\otimes \Pi(d)\otimes N \ar[r]^{1\otimes \triangle \otimes 1} & A\otimes \Pi(d) \otimes \Pi(d) \otimes N \ar[r]^z &\Pi(d)\otimes B \otimes N \ar[r] &\Pi(d)\otimes N
}
\]
where $z$ is evaluation on the second factor:
\[
z(a\otimes h_2 \otimes h_1 \otimes n) = h_2 \otimes <a, h_1> \otimes N.
\]

3. \emph{The module map $j: M \to d^*(N)$}. Define $j: M \to d^*(N) $ be the composition
\[
\xymatrixcolsep{3pc}
\xymatrix{
M \ar[r] & d \otimes M \ar[r] &\Pi(d) \otimes Q(M,N)\ar[r]& \Pi(d) \otimes N
}
\]
That this is a map of $A$ modules can be verified directly.

4. For any finite dimensional coalgebra $K$, the dual algebra $K^*$ acts on $K$:
\[
\xymatrixcolsep{3pc}
\xymatrix{
K^* \otimes K\ar[r]^{1\otimes \triangle} & K^* \otimes K\otimes K \ar[r]^{\tau \otimes 1} & K\otimes K^*\otimes K \ar[r]^{\ \ \ \ 1\otimes <-,->} & K
}
\]
where second map twists the first two factors, and the third map evaluates the second and third factor.

Reversing these arrows gives $K^*$ the structure of a coalgebra:
\[
\xymatrixcolsep{3pc}
\xymatrix{
K^* \ar[r] & K^*\otimes K \otimes K^* \ar[r] & K \otimes K^* \otimes K^* \ar[r] & K \otimes K^*
}
\]
or with respect to a basis $\{k_i\}$, its dual basis $\{k_i^*\}$ and structure constants $\triangle k_i = \sum c_i^{pq} k_p \otimes k_q$, so that $k^*_mk^*_n = \sum_t c_t ^{mn} k*_t$
\[
\xymatrixcolsep{3pc}
\xymatrix{
k_i^* \ar[r] & \sum_m  k_i^* \otimes k_m\otimes k^m \ar[r] & \sum_m k_m \otimes k^*_i \otimes k^*_m \ar[r] & \sum_m c_t^{im}k_m \otimes k^*_t
}
\]
Now apply this to $K = \Pi(d)$, so that $\Pi(d)^*$ is a comodule over $\Pi(d)$. Then
\[
\Pi(d)^* \to \Hom(d^*(N), N), \ \ \beta \mapsto <\beta, ->\otimes 1_N:\Pi(d)*(N) = \Pi(d)\otimes N \to N
\]
is a measuring map. Thus $\epsilon\otimes 1_N \in \Pi(d)^*$ represents an element in $Q(\Pi^*(d), N)$ and $\Pi(\epsilon\otimes 1_N) = \Pi(d)$.

The observation that the composition $e \circ j$ is exactly $d$ completes the proof.
\end{proof}
\subsection{The Sweedler product for modules and comodules}
As $A\triangleleft H$ fills the role of tensor enabling a tensor - hom adjunction for the enriched category of algebras, so there is a similar construction $M\triangleleft X$ enabling a corresponding tensor - hom adjunction for the enriched category of modules. More precisely, the aim of this section is to define an $A\triangleleft H$ module  $M\triangleleft _HX$ such that there is a natural isomorphism
\[
Q(M\triangleleft X, N) \cong Q(M, [X, N])
\]

At first glance, it is not even immediately apparent that  the two sides are comodules with respect to the same coalgebra; the left hand side being a comodule over $P(A\triangleleft H, B)$ and the right hand side being a comodule over $P(A, [H,B])$. However, the natural isomorphism between these two is the first part of theorem \ref{thm:natequiven}, and the arguments to establish this result will follow along the same lines as for the corresponding result for algebras.
\subsubsection{Definition}
Let $H \to \Hom_k(A,B)$ measure. For an $H$-comodule $X$ and a $A$-module $M$ define $M\triangleleft X$ to be the coequaliser of the following diagram:
\[
\xymatrix{
A\otimes M \otimes X \ar[r] \ar[d] & A\otimes M \otimes H \otimes X \ar[r]  &A\otimes H \otimes M \otimes X \ar[d] \\
M\otimes X \ar[rr]  & &A\triangleleft H\otimes M \otimes X \ar[r]  & M\triangleleft X.
}
\]
 That is, set 
\[
M\triangleleft X = (A\triangleleft H \otimes M\otimes X)/R
\]
where R is the minimal $A\triangleleft H$ submodule such that the diagram commutes.

This construction has the same properties for modules as \ref{thm:actions} for algebras.

\begin{theorem}\label{thm:squaremodeq}
\[
\triangleleft: _AMod\times _{P(A,B)}Comod \to  _BMod
\]
\[
[-,-]: \  _{P(A,B)}Comod^{op} \times _BMod \to _AMod
\]
providing a tensor - hom adjunction
\[
Mod(M, [-,N]) \cong Mod(M\triangleleft -,N)
\]
where $M, \ N$ are modules over $A, \ B$ respectively.
\end{theorem}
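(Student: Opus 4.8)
The plan is to follow the template set by the proof of Theorem~\ref{thm:actions}, treating the module statement as the exact parallel of the algebra statement, with Proposition~\ref{prop:meascomod} (the comodule analogue of Proposition~\ref{prop:meas}) playing the role that Proposition~\ref{prop:meas} played there. There are two things to establish: that $\triangleleft$ and $[-,-]$ are functorial actions with the indicated (co)domains, and that the displayed bijection holds and is natural. Throughout I fix the measuring coalgebra $H \to \Hom_k(A,B)$; by Proposition~\ref{prop:meas} this is an algebra homomorphism $A \to [H,B]$, and by Theorem~\ref{thm:actions} it corresponds to a canonical algebra homomorphism $A \triangleleft H \to B$. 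This last map is what lets me regard the $A\triangleleft H$-module $M \triangleleft X$ and the $B$-module $N$ as objects over a common algebra, so that the hom-set $Mod(M\triangleleft -,N)$ is meaningful.

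For functoriality of $[-,-]$, this is the familiar internal-hom functoriality, so I would only record that a comodule map $X' \to X$ together with a $B$-module map $N \to N'$ induces an $A$-module map $[X,N] \to [X',N']$, the $A$-module structures arising through $A \to [H,B]$ as in Proposition~\ref{prop:meascomod}. For functoriality of $\triangleleft$, I would argue exactly as for $\triangleleft$ on algebras: a compatible pair consisting of a module map $M \to M'$ and a comodule map $X \to X'$ induces a linear map on $A\triangleleft H \otimes M \otimes X$, and I would check that it carries the defining submodule $R$ into $R'$ by comparing the two coequaliser diagrams, so that it descends to a map $M\triangleleft X \to M'\triangleleft X'$. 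Compatibility with composition then follows, as in Theorem~\ref{thm:actions}, from the corresponding statement one level down for plain tensor products.

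For the adjunction I would begin, as in the algebra case, from the natural identifications of underlying vector spaces
\[
\Hom_k(M,[X,N]) \cong \Hom_k(M \otimes X, N) \cong \Hom_k(X,\Hom_k(M,N)).
\]
Proposition~\ref{prop:meascomod} says an element $\varphi$ on the left is an $A$-module map precisely when its transpose $\gamma: X \to \Hom_k(M,N)$ on the right is a measuring map of comodules. It then remains to match this with maps out of the coequaliser: given such a $\gamma$, the rule $f \otimes m \otimes x \mapsto \bar f\cdot\gamma(x)(m)$, where $\bar f$ denotes the image of $f$ under $A\triangleleft H \to B$, defines a map $A\triangleleft H \otimes M \otimes X \to N$, and I would show it annihilates $R$ exactly when $\gamma$ measures, yielding the desired map $M \triangleleft X \to N$; conversely every such map arises this way by restriction along the inclusion of $M\otimes X$. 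Naturality in all variables then reduces, as before, to the naturality of the three vector-space identifications above.

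The step I expect to be the main obstacle is precisely this last matching: unwinding the two legs of the coequaliser diagram defining $R$ and checking, term by term, that the resulting relation on $A \triangleleft H \otimes M \otimes X$ is killed by $f\otimes m\otimes x \mapsto \bar f\cdot\gamma(x)(m)$ if and only if $\gamma$ satisfies the measuring identity $\gamma(x)(am) = \sum_{(x)} \langle \rho(x_{(1)}),a\rangle\, \langle \gamma(x_{(0)}),m\rangle$. Getting the bookkeeping right---in particular tracking how the $A$-action on $M$ (the bottom-left leg) is traded for the coaction on $X$ and the image of $A\otimes H$ in $A\triangleleft H$ (the top-right leg), and how both interact with the algebra map $A\triangleleft H \to B$ defining the action on $N$---is the delicate part; everything else is formal and inherited from the vector-space level.
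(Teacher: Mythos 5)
Your proposal is correct and takes essentially the same route as the paper: the paper's own proof is only a two-sentence sketch deferring to Theorem \ref{thm:actions} and to ``brute computation'', and your argument---Proposition \ref{prop:meascomod} playing the role of Proposition \ref{prop:meas}, the three vector-space identifications, and the check that the coequaliser relations are annihilated precisely by measuring maps---is exactly that computation carried out. If anything, you are more careful than the paper about the one delicate point, namely making $Mod(M\triangleleft -, N)$ meaningful by pulling the $B$-action on $N$ back along the canonical algebra map $A\triangleleft H \to B$.
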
 
\begin{proof}
In essence, as with theorem \ref{thm:actions}, this is true because the statements are true for the underlying vector spaces. The steps required for verification are the same, the work required again reduces to brute computation.

\end{proof}

\subsection{Measuring comodules for comodules}

The concept of measuring comodules extends to measuring comodules for comodules, in parallel with measuring coalgebras for coalgebras. Again, the reason for introducing this concept is to complete a chain of three functorial equivalences parallel to theorem \ref{thm:natequiven}. Let $X, \ Y$ be comodules over $H, \ K$ respectively, and let $Z$ be a  $P(H,K)$ comodule. A map $\theta: Z \to \Hom_k(X,Y)$ measures if 
\[
\triangle <\theta (z), x> = \sum <\theta(z_{(1)}), x_{(1)}>\otimes <\theta(z_{(0)}),x_{(0)}>.
\]

Observe that the evaluation map
\[
P(H,K) \otimes H \to K
\]
is a coalgebra map.  As with the modules, this coalgebra homomorphism provides comodules over $P(H,K) \otimes H$ with a $K$ comodule structure. This in turn provides an alternative description of measuring comodules.

\begin{proposition}
With $X, \ Y, \ Z $ comodules over $H, \ K, \ P(H,K)$ respectively, a map  $\theta: Z \to \Hom_k(X,Y)$ is a measuring map if and only if the corresponding map
\[
\theta: Z \otimes H \to K
\]
is a map of comodules.
\end{proposition}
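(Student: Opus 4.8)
The plan is to imitate the proof of Proposition \ref{prop:meas} (and of its coalgebra analogue in Section \ref{sec:meas.co.for.co}): the whole statement will reduce to writing the same equation in two adjoint pictures. First I would fix the vector-space currying isomorphism $\Hom_k(Z,\Hom_k(X,Y)) \cong \Hom_k(Z\otimes X,Y)$, under which $\theta$ corresponds to the map $\hat\theta$ determined by $\hat\theta(z\otimes x) = <\theta(z),x>$. This $\hat\theta: Z\otimes X\to Y$ is the map the proposition refers to, and the claim becomes: $\theta$ measures if and only if $\hat\theta$ is a $K$-comodule map.

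Next I would make the two comodule structures explicit. Since $Z$ is a $P(H,K)$-comodule and $X$ an $H$-comodule, $Z\otimes X$ is canonically a $P(H,K)\otimes H$-comodule via $\triangle(z\otimes x)=\sum (z_{(1)}\otimes x_{(1)})\otimes(z_{(0)}\otimes x_{(0)})$. The essential input, recorded immediately above the proposition, is that the evaluation $\mathrm{ev}:P(H,K)\otimes H\to K$, $p\otimes h\mapsto <\pi(p),h>$, is a coalgebra map. Pushing the coaction forward along $\mathrm{ev}$ therefore equips $Z\otimes X$ with a genuine $K$-comodule structure $\triangle(z\otimes x)=\sum <\pi(z_{(1)}),x_{(1)}>\otimes(z_{(0)}\otimes x_{(0)})$, so that both source and target of $\hat\theta$ live over the same coalgebra $K$. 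Verifying that coassociativity and the counit axiom survive this pushforward is the one place the coalgebra-map property of $\mathrm{ev}$ is genuinely used, and I would treat this as the technical heart of the argument.

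With both sides realised as $K$-comodules, I would simply write out the intertwining condition $\triangle_Y\circ\hat\theta = (1_K\otimes\hat\theta)\circ\triangle_{Z\otimes X}$ on a generator $z\otimes x$, obtaining
\[
\triangle_Y(<\theta(z),x>) = \sum <\pi(z_{(1)}),x_{(1)}>\otimes<\theta(z_{(0)}),x_{(0)}>,
\]
which is verbatim the measuring identity in the statement (with $\pi$ the universal measuring map). Since the comodule-map condition and the measuring condition are literally the same equation read through the adjunction, both implications drop out at once.

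The main obstacle is not any single hard step but the bookkeeping: keeping the three distinct coactions — of $H$ on $X$, of $K$ on $Y$, and of $P(H,K)$ on $Z$ — together with the measuring map $\pi$ correctly indexed in Sweedler notation, and confirming that the coaction pushed forward along $\mathrm{ev}$ really is coassociative with the expected counit. As in Proposition \ref{prop:meas}, once these structures are aligned the equivalence is immediate, and any naturality one might want again descends from the underlying vector-space adjunction.
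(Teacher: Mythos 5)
Your proof is correct and is essentially the argument the paper intends: the paper states this proposition without an explicit proof, having just supplied the one needed ingredient (that the evaluation $P(H,K)\otimes H \to K$ is a coalgebra map, so that $Z\otimes X$ acquires a $K$-comodule structure by pushing its $P(H,K)\otimes H$-coaction forward), and your currying argument, showing the comodule-map condition and the measuring identity are the same equation, is precisely the completion of that setup. You also read the statement correctly in two places where the paper is loose: the displayed map should indeed be $\hat\theta : Z\otimes X \to Y$ (the paper's $Z\otimes H \to K$ is a slip), and the term $\theta(z_{(1)})$ in the paper's measuring identity must be interpreted as $\pi(z_{(1)})$, with $\pi$ the universal measuring map, since $z_{(1)}$ lies in $P(H,K)$ rather than $Z$.
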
.

As for measuring coalgebras, the strong finiteness property holds equally for comodules and ensures that colimits and coequalisers exist in the category of comodules (over a given coalgebra).  As for coalgebras, there is a terminal element for measuring comodules for comodules. The procedure for proving 

\begin{proposition}\label{prop;meascomod} 
For comodules $X, \ Y$ over a coalgebras $H, \ K$, there is an $P(H,K)$ comodule $Q(X,Y)$ and a measuring map $\ Q(X,Y) \to \Hom_k(X,Y)$ such that if $Z$ is another $P(H,K)$ comodule and $\zeta:Z \to \Hom_k(X,Y)$ measures, there is a unique map $Q(\zeta):Z \to Q(X,Y)$ such that the following diagram commutes.
\[
\xymatrixcolsep{5pc} 
\xymatrix{
Q(X,Y)  \ar[r] & \Hom_k(X,Y) \\
Z \ar[u]^{Q(\zeta)}\ar[ur]_{\zeta} &
}
\]
This establishes a natural equivalence
\[
Comod(-, Q(X,Y)) \cong Comod( -\otimes X, Y)
\]
\end{proposition}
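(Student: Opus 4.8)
The plan is to mirror exactly the two-step pattern already used for the universal measuring coalgebra: first construct $Q(X,Y)$ as a coequaliser, leaning on the finiteness property of comodules as in Proposition \ref{prop:P(A,B)exists}, and then read off the natural equivalence from the equivalent characterisation of measuring recorded in the preceding proposition, exactly as \ref{prop:natequiv.co} was deduced for coalgebras. Since every measuring comodule over a measuring coalgebra $L \to \Hom_k(H,K)$ corestricts along the universal map $L \to P(H,K)$ to a measuring $P(H,K)$-comodule (the comodule analogue of the remark following the definition of $Q(M,N)$), it suffices to work throughout inside the category of $P(H,K)$-comodules.

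For existence I would first use the strong finiteness property — every element of a comodule lies in a finite-dimensional subcomodule — to see that the isomorphism classes of finite-dimensional measuring comodules $(Z_\kappa,\zeta_\kappa)$ form a set. Form the coproduct $K=\coprod_\kappa Z_\kappa$, then a second coproduct $L=\coprod_\lambda Z_{\kappa,\lambda}$ indexed by maps $\lambda$ of measuring comodules, and define $f,g:L\to K$ by source-inclusion and target-inclusion precisely as in \ref{prop:P(A,B)exists}. I would then set $Q(X,Y)$ to be the coequaliser, effectively $K/\mathrm{im}(f-g)$, the relevant colimits and coequalisers being available by the finiteness property noted above. The points needing verification are the comodule analogues of those in \ref{prop:P(A,B)exists}: that $\mathrm{im}(f-g)$ is a subcomodule of $K$, so that the quotient is again a $P(H,K)$-comodule, and that it is annihilated by the map into $\Hom_k(X,Y)$, so that $\psi:Q(X,Y)\to\Hom_k(X,Y)$ is well defined and measures.

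Terminality again rests on finiteness: an arbitrary measuring comodule $Z$ is the ascending union of its finite-dimensional measuring subcomodules, each isomorphic to some $Z_\kappa$ and hence mapping canonically into $K$; these inclusions are compatible under the coequaliser relation and assemble into the unique measuring-comodule map $Q(\zeta):Z\to Q(X,Y)$, which is the required universal property. For the natural equivalence I would invoke the preceding proposition, which identifies a measuring map $\theta:Z\to\Hom_k(X,Y)$ with the comodule map $\tilde\theta:Z\otimes X\to Y$, $z\otimes x\mapsto\langle\theta(z),x\rangle$, where $Z\otimes X$ carries the $K$-comodule structure induced by the coalgebra map $P(H,K)\otimes H\to K$. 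This identifies the set of measuring maps $Z\to\Hom_k(X,Y)$ with $Comod(Z\otimes X,Y)$, while the universal property just established identifies the same set with $Comod(Z,Q(X,Y))$; composing gives $Comod(Z,Q(X,Y))\cong Comod(Z\otimes X,Y)$. Naturality in $Z$ is automatic, since all the identifications are built from the underlying vector-space adjunction $\Hom_k(Z\otimes X,Y)\cong\Hom_k(Z,\Hom_k(X,Y))$, exactly as in \ref{prop:natequiv} and \ref{prop:natequiv.co}.

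The main obstacle I expect is the verification that the coequaliser construction stays within $P(H,K)$-comodules, i.e.\ that $\mathrm{im}(f-g)$ is genuinely a subcomodule and that the induced comodule structure on the quotient is compatible with the measuring map $\psi$. This is the one step that is not a formal consequence of a fact already established for the underlying vector spaces, and it is precisely where the finiteness property of comodules does the essential work; everything else is bookkeeping parallel to the algebra--coalgebra case.
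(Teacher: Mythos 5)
Your proposal is correct and follows essentially the same route as the paper: the paper likewise constructs $Q(X,Y)$ by repeating the coequaliser argument of Proposition \ref{prop:P(A,B)exists} (relying on the finiteness property of comodules), and obtains the natural equivalence by combining the universal property with the preceding characterisation of measuring maps as comodule maps out of the tensor product, via a Yoneda-style identification of hom-sets. Your write-up merely supplies more of the bookkeeping (the explicit coequaliser diagram, the ascending-union argument for terminality) that the paper leaves implicit.
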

\begin{proof}The proof of the existence of $Q(M,N)$ follows the same arguments as the corresponding proposition \ref{prop:P(A,B)exists} for algebras.  The natural equivalence can be shown applying the Yoneda and using proposition \ref{prop;meascomod}.
\end{proof}

\subsection{Enriched categories of modules and comodules}

As for the case of algebras, the existence of the universal measuring comodules enables a categories of modules enriched over comodules and of comodules enriched over comodules. The principle results about the universal measuring comodule are collected in the following theorem. 

\begin{theorem}
\begin{enumerate}
	\item There is a category $\underline{Mod}$ enriched over comodules whose objects are modules and whose morphisms $\underline{Mod}(_AM,_BN)$ is the measuring comodule $Q(M,N)$
	\item There is a category $\underline{Comod}$ enriched over comodules whose objects are comodules and whose morphisms $\underline{Comod}(_HX,_KY)$ is the universal measuring comodule $Q(X,Y)$
	\item Let $M,\ N$ be modules over $A, \ B$ respectively. There are natural equivalences
	\[
\underline{Comod}( -,Q(M,N)) \cong \underline{Mod}(M, [-,N]) \cong \underline{Mod}(M\triangleleft -, N)
	\]
\end{enumerate}
\end{theorem}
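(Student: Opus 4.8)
The plan is to run the proof of Theorem \ref{thm:natequiven} essentially verbatim, replacing each algebra--coalgebra ingredient by its module--comodule counterpart: Proposition \ref{prop:natequivmod} (the universal property of the measuring comodule $Q(M,N)$) plays the role of Proposition \ref{prop:natequiv}; Theorem \ref{thm:squaremodeq} (the adjunction $Mod(M\triangleleft -, N)\cong Mod(M,[-,N])$) plays the role of Theorem \ref{thm:actions}; and Proposition \ref{prop;meascomod} (the universal property of the measuring comodule $Q(X,Y)$ for comodules) plays the role of Proposition \ref{prop:natequiv.co}. Throughout I let $L$ be a $P(A,B)$-comodule filling the blank, so that $M\triangleleft L$, $[L,N]$, $Q(M\triangleleft L,N)$, $Q(M,[L,N])$ and $Q(L,Q(M,N))$ are all defined.

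For parts (1) and (2), the enriched structure, I would argue exactly as in the earlier construction of $\underline{Mod}$: the universal property of $Q(-,-)$ supplies a composition morphism $Q(N,P)\otimes Q(M,N)\to Q(M,P)$ (respectively $Q(Y,Z)\otimes Q(X,Y)\to Q(X,Z)$ in the comodule case), the trivial one-dimensional comodule supplies the unit, and uniqueness in the universal property forces associativity and the unit laws. Part (1) merely repackages the enriched category $\underline{Mod}$ already produced; part (2) is its verbatim comodule analogue, with Proposition \ref{prop;meascomod} furnishing the measuring comodules $Q(X,Y)$ and the evaluation map $P(H,K)\otimes H\to K$ giving the base coalgebra over which the composite lives.

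For part (3) I would invoke the Yoneda lemma in the category of comodules, reducing the two claimed isomorphisms to natural isomorphisms of the sets $Comod(C,-)$ for an arbitrary test comodule $C$. The equivalence of the right and middle terms then unwinds along
\begin{align*}
Comod(C, Q(M\triangleleft L, N)) &\cong Mod(M\triangleleft L, [C,N]) \\
&\cong Mod(M, [L,[C,N]]) \\
&\cong Mod(M, [L\otimes C, N]) \\
&\cong Mod(M, [C\otimes L, N]) \\
&\cong Mod(M, [C,[L,N]]) \\
&\cong Comod(C, Q(M,[L,N])),
\end{align*}
where the two outer steps are Proposition \ref{prop:natequivmod}, the second step is Theorem \ref{thm:squaremodeq}, and the three inner steps are the hom--tensor adjunction and the braiding of the tensor factors, both valid at the level of underlying vector spaces. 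The equivalence of the left and middle terms unwinds similarly, beginning with $Comod(C, Q(L, Q(M,N)))\cong Comod(C\otimes L, Q(M,N))$ by Proposition \ref{prop;meascomod}, then applying Proposition \ref{prop:natequivmod} and the same hom--tensor identity to arrive again at $Comod(C, Q(M,[L,N]))$.

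The main obstacle is not any single calculation but the bookkeeping of the base coalgebras: a priori $Q(M\triangleleft L,N)$ is a comodule over $P(A\triangleleft L,B)$ while $Q(M,[L,N])$ is a comodule over $P(A,[L,B])$, so the asserted isomorphism only acquires meaning once these two coalgebras are identified, and that identification is precisely the first equivalence of Theorem \ref{thm:natequiven}. I would therefore carry the algebra-level identifications of Theorem \ref{thm:natequiven} alongside the module-level chain, verifying at each step that the displayed vector-space bijection intertwines the comodule coactions over the correspondingly identified coalgebras rather than being merely linear. Once this coherence is secured, the residual checks — naturality in $C$ and compatibility with the comodule and module structures — are routine, exactly as the analogous naturality claims were routine in the proof of Theorem \ref{thm:natequiven}.
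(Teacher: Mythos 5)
Your proposal matches the paper's proof essentially step for step: parts (1) and (2) via the universal property of $Q(-,-)$ supplying composition, and part (3) via Yoneda reduction to two chains of set-level isomorphisms alternating Proposition \ref{prop:natequivmod}, Theorem \ref{thm:squaremodeq}, Proposition \ref{prop;meascomod}, and hom--tensor rearrangements of the underlying vector spaces. Your closing remark about tracking the base coalgebras is exactly the subtlety the paper flags, resolved there too by the identification $P(A,[H,B]) \cong P(A\triangleleft H, B) \cong P(H,P(A,B))$ from Theorem \ref{thm:natequiven}.
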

\begin{proof}
1. Composition of morphisms is a consequence of the universal property of measuring comodules. For modules $M_i$ over $A_i$, $i \in \{1, \ 2, \ 3\}$
\[
\xymatrixcolsep{2pc} 
\xymatrix{
Q(M_2,M_3)\otimes Q(M_1, M_2) \ar[r] &\Hom(M_2,M_3) \otimes \Hom(M_1,M_2) \ar[r] &\Hom(M_1, M_3)
}
\]
is a measuring map.

2. This is identical to 1. in its logic.

3. The great simplification of working within the categorical framework is that the parallels with the enriched categories of algebras and coalgebras are exact; the proof of this statement is formally identical to its parallel theorem \ref{thm:natequiven}. However, the added subtlety is that care must be taken to identify which algebras or coalgebras over which the modules or comodules are acting or coacting.

Let $X$ be a comodule over a coalgebra $H$. By Yoneda's lemma, it is sufficient to show that for a comodule $Y$ over $P(A,[H,B]) \cong P(A\triangleleft H, B) \cong P(H,P(A,B))$, there are natural isomorphisms of sets
\[
Comod(Y,Q(M\triangleleft X, N)) \cong Comod(Y, Q(M, [X,N]))
\]
\[
Comod(Y,Q(M, [X,N])) \cong Comod(Y, Q(X,Q(M,N)))
\]
The challenging aspect of the proof is to keep track of the underlying algebra or coalgebra homomorpshisms.

For the first, the sequence of equivalences is the following.
\begin{eqnarray*}
Comod(Y, Q(M\triangleleft X,N)) &  \cong ^a & Mod(M\triangleleft X, [Y,N]) \\
				&\cong^b  & Mod(M, [X, [Y,N]]) \\
				& \cong ^c & Mod(M, [X\otimes Y,N]) \\
				& \cong ^d & Mod( M, [Y\otimes X, N]) \\
				& \cong^e & Mod(M, [Y,[X,N]]) \\
				& \cong^f & Comod(Y, Q(M, [X,N])).			
\end{eqnarray*}
The equivalence $a$ is the defining property of the universal measuring comodule, and proposition \ref{prop:meascomod}. Equivalence $b$ is theorem \ref{thm:squaremodeq}, equivalences $ c,\  d, \ e$ are all familiar rearrangements. Equivalence $f$ is again theorem \ref{thm:squaremodeq}.

For the second the sequence of equivalences is the following.
\begin{eqnarray*}
Comod(Y, Q(X, Q(M,N)) & \cong^g & Comod(Y \otimes X, Q(M,N)) \\
				& \cong^h & Mod(M, [Y \otimes X,N])\\
				& \cong ^k& Mod (M, [Y,[X,N]]) \\
				& \cong^l & Comod( Y, Q(M, [X,N]).
\end{eqnarray*}
Equivalence $g$ is the fundamental property of the universal measuring comodule, proposition \ref{prop;meascomod}. Equivalence $h$ is propositions \ref{prop:natequivmod}.  Equivalence $k$ is a standard rearrangement, and $l$ is again proposition \ref{prop:natequivmod}.
\end{proof}
The enriched categories $\underline{Mod}$ and $\underline{Comod}$ provide global category of modules: \emph{no reference need be made to the algebra or coalgebra acting or coacting}. Either this enriched category structure, or the bicategory structure of $\widecheck{Mod}$ seems to be required to recover the more sensitive comparisons between modules. 

$Q(M,N)$ is a distinguished element (the terminal element) of $\widecheck{Mod}(M,N)$. There is no such distinguished element of $\widehat{Mod}(M,N)$; the following section describes the closest thing to it, the module analogue for the Sweedler product.

\subsection{$D(M,N) = (M\triangleleft N^\circ)$ and its properties}
As with algebras and the universal measuring coalgebras, the Sweedler product enables a pre-dual concept, which in turn provides methods to understand and compute the universal measuring comodule. We will need the analogue of the dual coalgebra $-^\circ$ for modules. 

\begin{proposition}
Let $N$ be an module over an algebra $B$.  Define the \emph{dual comodule} $N^\circ$ to the the subspace of the full dual $N^*$. 
\[
N^\circ = \{\nu \in N^*: \emph{ker} \nu \supset W, \  W \emph{ is a submodule},  \ dim N/W < \infty \}
\]
Then $N^\circ$ is a $B^\circ$ comodule.
\end{proposition}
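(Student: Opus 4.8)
The plan is to build the coaction by dualising the $B$-action and to keep everything finite by passing to finite-dimensional quotient modules, exactly as $B^\circ$ itself is built from the multiplication of $B$ (cf. Corollary \ref{cor:duco}). Write the module structure as $\alpha:B\otimes N\to N$. Its linear dual is $\alpha^*:N^*\to (B\otimes N)^*$, and the candidate coaction is simply the restriction $\triangle_{N^\circ}:=\alpha^*|_{N^\circ}$; there is no freedom in this map. The entire content of the proposition is therefore the two assertions that this restriction takes values in $B^\circ\otimes N^\circ\subset (B\otimes N)^*$ and that it satisfies the left-comodule axioms.

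For the reduction, given $\nu\in N^\circ$ I would first fix a cofinite submodule $W\subseteq\ker\nu$ and set $\bar N=N/W$, a finite-dimensional $B$-module with representation $\rho:B\to\End(\bar N)$. Then $\nu$ lies in $W^\perp\subseteq N^*$, which is naturally identified with $\bar N^*$, and since $\bar N$ is a finite-dimensional quotient module one has $W^\perp\subseteq N^\circ$. Because $\bar N$ is finite-dimensional, dualisation commutes with tensoring by $\bar N$, so the dual of the induced action is a genuine tensor map
\[
\bar\alpha^*:\bar N^*\to (B\otimes\bar N)^*=B^*\otimes\bar N^*,
\]
and $\alpha^*(\nu)$ agrees with $\bar\alpha^*(\nu)$ under the inclusion $B^*\otimes\bar N^*\hookrightarrow(B\otimes N)^*$. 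This is where the finiteness property of comodules is doing its work, and it is the reason $B$ itself is allowed to be infinite-dimensional.

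The hard part — the one step that is not purely formal — will be to show that the $B$-components of $\bar\alpha^*(\nu)$ lie in $B^\circ$ rather than merely in $B^*$. Here I would factor the action as $\bar\alpha=\mathrm{ev}\circ(\rho\otimes 1)$, where $\mathrm{ev}:\End(\bar N)\otimes\bar N\to\bar N$; dualising gives $\bar\alpha^*=(\rho^*\otimes 1)\circ\mathrm{ev}^*$, with $\mathrm{ev}^*$ the standard $\End(\bar N)^*$-comodule structure on $\bar N^*$. Since $\rho:B\to\End(\bar N)$ is an algebra homomorphism into a finite-dimensional algebra, its kernel is a cofinite ideal, so each $\rho^*(\phi)=\phi\circ\rho$ ($\phi\in\End(\bar N)^*$) vanishes on that cofinite ideal and hence lies in $B^\circ$ — this is precisely the argument of Corollary \ref{cor:duco}. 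Thus $\bar\alpha^*(\nu)\in B^\circ\otimes\bar N^*\subseteq B^\circ\otimes N^\circ$, so $\triangle_{N^\circ}$ is well defined. Independence of the choice of $W$ is routine: a refinement $W'\subseteq W$ yields surjections $N\to N/W'$ and $N/W'\to N/W$ compatible with the dualised actions, so the computed element of $B^\circ\otimes N^\circ$ is unchanged.

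Finally I would obtain the comodule axioms by transposing the module axioms on the finite-dimensional quotients, where dualisation is exact, and then passing to the ascending union that is $N^\circ$. Coassociativity $(\triangle_{B^\circ}\otimes 1_{N^\circ})\circ\triangle_{N^\circ}=(1_{B^\circ}\otimes\triangle_{N^\circ})\circ\triangle_{N^\circ}$ is the transpose of associativity $\alpha\circ(\mu_B\otimes 1_N)=\alpha\circ(1_B\otimes\alpha)$, using $\triangle_{B^\circ}=\mu_B^*$; and the counit law $(\epsilon_{B^\circ}\otimes 1_{N^\circ})\circ\triangle_{N^\circ}=1_{N^\circ}$ is the transpose of the unit law $\alpha\circ(\eta_B\otimes 1_N)=1_N$, since $\epsilon_{B^\circ}$ is evaluation at $1_B=\eta_B(1)$. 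Each identity is checked on a common finite-dimensional quotient containing the relevant data and then holds on all of $N^\circ$, which completes the proof that $N^\circ$ is a $B^\circ$ comodule.
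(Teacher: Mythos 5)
Your proof is correct and takes essentially the same approach as the paper: both pass to the finite-dimensional quotient $N/W$, observe that the action factors through the cofinite quotient $B/\mathrm{Ann}(N/W)$ (your $\ker\rho$ for $\rho:B\to\End(N/W)$), and dualise so that $W^\perp$ becomes a comodule over the finite-dimensional coalgebra $\mathrm{Ann}(N/W)^\perp\subset B^\circ$, with $N^\circ$ the union of these. Your checks of independence of the choice of $W$ and of the comodule axioms simply make explicit what the paper leaves implicit.
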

\begin{proof}
Suppose $W$ is a submodule of $B$ such that $N/W$ is a finite dimensional $B$ module. The action $B\otimes N \to N$ then factors through $(B/Ann(N/W) )\otimes N/W \to (N/W)$. Thus any element $\nu$ in $n^\circ$ is an element of $W^\perp$ which is an $Ann(N/W)^\perp$ comodule.  But since $Ann(N/W)^\perp \subset B^\circ$ $W^\perp$ is equally a $B^\circ$ module. 
\end{proof}

\begin{definition}
The \emph{universal measuring module} $D(M,N)$ is defined to be the Sweedler product $M\triangleleft N^\circ$.
\end{definition} 

The categorical structure behind $D(M,N)$ is identical to that behind $F(A,B)$, enabling us to lift properties and proofs from algebras to modules.

\begin{proposition}\label{prop:propD} 
Properties of D(M,N).
\begin{enumerate}
	\item Let $B$ be a finite dimensional algebra. For any finite dimensional $B$ module $N$ and (arbitrary) $A$ module $M$, there is a module map $\tau(M,N)$ relative to $\eta: A \to F(A,B)\otimes B$ 
	\[
	\tau(M,N): M \to D(M,N)\otimes N
	\]
	such that for any extension $\rho: M \to W \otimes N$ relative to an extension $\sigma:A \to S \otimes B$ there is a module map $D(\rho):D(M,N) \to W$ relative to $F(\sigma)$ such that the diagram
	\[	
\xymatrixcolsep{5pc}
\xymatrix{
M\ar[rd]_\rho\ar[r]^{\tau(M,N)} & D(M,N)\otimes N \ar[d]^{D(\rho)\otimes 1_B}\\
&W\otimes N
}
\]
commutes. Moreover, $\tau(\_ ,N)$ is a natural transformation.
	\item For arbitrary $B$ and arbitrary $N$ the module map $D(\rho):D(M,N) \to W$ still exists. The assignment 
	\[
	D: \widehat{Mod(M,N)} \to Mod(D(M,N),-), \ \ D(W,\rho):= D(\rho): D(M,N) \to S
	\]
is a functor.
	\item If $N$ is finite dimensional $D(M,N)$ is an initial object in the category $\widehat{Mod}(M,N)$
	\item  The evaluation map $M \to [D(M,N),N]$ is a module map relative to the algebra homomorphism $A \to [P(A,B),B]$.
	\item $D(M,N)^\circ \cong Q(M,N).$
\end{enumerate}
\end{proposition}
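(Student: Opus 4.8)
The governing principle, stated just before the proposition, is that $D(M,N) = M \triangleleft N^\circ$ stands to the universal measuring comodule $Q(M,N)$ exactly as $F(A,B) = A \triangleleft B^\circ$ stands to $P(A,B)$. The plan is therefore to transcribe the proof of Theorem \ref{thm:F(A,B)} clause by clause, making three systematic substitutions: the tensor--hom adjunction of Theorem \ref{thm:actions} is replaced by its module version Theorem \ref{thm:squaremodeq}, the correspondence of Proposition \ref{prop:meas} by Proposition \ref{prop:meascomod}, and the universal coalgebra $P$ by the universal comodule $Q$. The standing bookkeeping burden, which the paper has repeatedly flagged, is to keep track of the base objects: $D(M,N)$ is an $F(A,B)$-module, $Q(M,N)$ is a $P(A,B)$-comodule, and these two base objects are themselves linked by $P(A,B) \cong F(A,B)^\circ$ from Theorem \ref{thm:F(A,B)}(6).

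For part 1, I would construct $\tau(M,N)$ by mirroring the construction of $\eta(A,B)$ in Theorem \ref{thm:F(A,B)}(1): take the canonical copairing $e_N \in N^\circ \otimes N$ and set $\tau(M,N)(m)$ to be the image of $m \otimes e_N$ under $M \otimes N^\circ \otimes N \to D(M,N) \otimes N$, where $M \otimes N^\circ$ enters $D(M,N) = M \triangleleft N^\circ$ through the unit of $F(A,B)$. That $\tau(M,N)$ is a module map relative to $\eta(A,B)$ is then the module-theoretic analogue of the computational diagram in Theorem \ref{thm:F(A,B)}(1): one dualises the $B$-action on $N$ into the $B^\circ$-coaction on $N^\circ$ --- legitimate because $N$ is finite dimensional, so the arrows simply reverse --- while the $F(A,B)$-module structure on $D(M,N)$ carries the factors of $M$ along untouched. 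The universal map $D(\rho)$ and its uniqueness come straight from the adjunction of Theorem \ref{thm:squaremodeq}, and naturality of $\tau(-,N)$ reduces, as for $\eta(-,B)$, to checking that the relevant map respects the defining submodule $R$ of the Sweedler product.

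Parts 2--4 are then largely formal. For part 2, an extension $\rho: M \to W \otimes N$ relative to $\sigma: A \to S \otimes B$ is recast as a module map $M \to [N^\circ, W]$ relative to the algebra map $A \to [B^\circ, S]$ (the device used in Theorem \ref{thm:F(A,B)}(2)), and feeding this into $Mod(M, [N^\circ, W]) \cong Mod(M \triangleleft N^\circ, W)$ of Theorem \ref{thm:squaremodeq} produces $D(\rho)$; functoriality is inherited from that of $\triangleleft$. For part 3, when $N$ is finite dimensional, part 1 together with uniqueness exhibits $\tau(M,N)$ as an initial extension, so one need only check that $\tau(M,N)$ lies in $\widehat{Mod}(M,N)$ and that $D(\rho)$ is a morphism of extensions. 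Part 4 is exactly the module analogue Proposition \ref{prop:meascomod}: the universal measuring map of $Q(M,N)$ is by definition a measuring map, which Proposition \ref{prop:meascomod} recasts as the asserted module map relative to the algebra homomorphism $A \to [P(A,B), B]$ of Theorem \ref{thm:F(A,B)}(5) --- noting that the bracket $[-,N]$ requires a comodule in its first slot, so the object appearing there is read through the dual comodule $Q(M,N) = D(M,N)^\circ$ supplied by part 5.

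Part 5, $D(M,N)^\circ \cong Q(M,N)$, is the module analogue of Corollary \ref{cor:F(A,B)*} and Theorem \ref{thm:F(A,B)}(6). Running the module version of the enriched equivalences (the analogue of Theorem \ref{thm:natequiven}) gives $Q(M, [N^\circ, k]) \cong Q(M \triangleleft N^\circ, k) \cong (M \triangleleft N^\circ)^\circ = D(M,N)^\circ$, and for finite dimensional $N$ the identification $[N^\circ, k] \cong N$ collapses the left side to $Q(M,N)$. The step I expect to be the real obstacle is not any single calculation but combining the explicit module-map verification in part 1 with the base-object bookkeeping in part 5: one must confirm that $D(M,N)^\circ$ and $Q(M,N)$ are comodules over the \emph{same} coalgebra --- reconciling the a priori base $(A \triangleleft B^\circ)^\circ = F(A,B)^\circ$ of the former with the base $P(A,B)$ of the latter via Theorem \ref{thm:F(A,B)}(6) --- and that the chain of natural isomorphisms is compatible with these coactions, not merely with the underlying vector spaces. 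This is precisely the added subtlety the paper warns of, and it is where transcription from the algebra case demands genuine care rather than routine substitution.
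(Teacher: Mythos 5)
Your proposal is correct and matches the paper's intent exactly: the paper offers no detailed proof here, stating only that ``the proofs of these statements are essentially the same as the corresponding statements for $F(A,B)$ in Theorem \ref{thm:F(A,B)}'', and your clause-by-clause transcription --- substituting Theorem \ref{thm:squaremodeq} for Theorem \ref{thm:actions}, Proposition \ref{prop:meascomod} for Proposition \ref{prop:meas}, and $Q$ for $P$ --- is precisely that intended argument, spelled out in more detail than the paper itself provides. Your closing observation about reconciling the coalgebra bases via $P(A,B) \cong F(A,B)^\circ$ is likewise the same bookkeeping subtlety the paper flags in its enriched-category discussion.
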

The great advantage of the categorical approach is that the proofs of these statements are essentially the same as the corresponding statements for $F(A,B)$ in Theorem \ref{thm:F(A,B)}.
The module $D(M,N)$ is difficult to conceptualise, but we have some familiar examples.
\begin{corollary}
\begin{enumerate}
\item Considering $A$ as a left $A$ module, $D(A,A) = F(A,A)\otimes A^\circ$ as a left $F(A,A)$ module.
\item For $A = k$ and $M, \ N$ finite dimensional vector vector spaces, $D(M,N) = M\otimes N^*$.
\item For $A$ modules $M_1, \ M_2$ and $N$ a $B$ module, $D(M_1 \oplus M_2,N) = D(M_1,N) \oplus D(M_2,N)$.
\item For an $A$ module $M$ and $B$ modules $N_1, \ N_2$, $D(M, N_1\oplus N_2) = D(M,N_1) \oplus D(M,N_2)$
\end{enumerate}
\begin{proof}
\begin{enumerate}
	\item Observe that $a\otimes \alpha = \sum a\otimes \alpha_{(1)} \otimes 1_A\otimes \alpha_{(0)}$ for $a$ in $A$, $\alpha$ in $A^\circ$.  Identifying $A^\circ$ with $1_{F(A,A)} \otimes (1_A \otimes A^\circ)$, $D(A,A)$ is thus seen to be generated by $A^\circ$ as an $F(A,A)$ module.  To see that $D(A,A) = F(A,A)\otimes A^\circ$,
observe that he ideal $R$ defining the module is generated by the set $U$ of elements of the form $aa'\otimes \alpha - \sum(a\otimes \alpha_{(1)})\otimes (a' \otimes \alpha_{(0)})$. But
	\begin{align}
	\sum_{(\alpha)} (aa'\otimes \alpha_{(1) })\otimes (1_A\otimes \alpha_{(0)}) = & aa'\otimes \alpha \\
	= & \sum _{(\alpha)} (a\otimes \alpha_{(2)})\otimes(a'\otimes \alpha_{(1)})\otimes (1_A\otimes \alpha_{(0)})
	\end{align}
	But then, $\triangle ^2 = (\triangle \otimes 1)\circ \triangle$, for any expression $\triangle \alpha = \sum_\alpha \alpha_{(1)} \otimes \alpha_{(0)}$, and $U$ can be replaced by the set $U'$ of elements of the form
	\[
\sum_{(\alpha)} (aa'\otimes \alpha_{(1) })\otimes (1_A\otimes \alpha_{(0)}) -
	 \sum _{(\alpha)} (a\otimes \alpha_{(2)})\otimes(a'\otimes \alpha_{(1)})\otimes (1_A\otimes \alpha_{(0)})
	\]
	that is, elements
	\[
	\sum_{(\alpha)} (aa'\otimes \alpha_{(1) })-
	 \sum _{(\alpha)} (a\otimes \alpha_{(2)})\otimes(a'\otimes \alpha_{(1)})\otimes (1_A\otimes \alpha_{(0)})
	\]
	but each term 
	\[
	\sum_{(\alpha_{(0)})} (aa'\otimes \alpha_{(1) })-
	 \sum _{(\alpha_{(0)})} (a\otimes \alpha_{(2)})\otimes(a'\otimes \alpha_{(1)}))
	\]
	is 0 in $F(A,A)$.
	\item This is immediate since $F(k,k) = k$.
	\item $F(A,B)\otimes ((M_1\oplus M_2)\otimes N^\circ) = F(A,B)\otimes (M_1\otimes N^\circ) \oplus F(A,B) \otimes (M_2 \otimes N^\circ).$ The subspace $U$ generating the submodule $R$ defining $D(M_1 \oplus M_2, N)$ similarly splits as a direct sum $U_1 \otimes U_2 \subset F(A,B)\otimes (M_1\otimes N^\circ) \oplus F(A,B) \otimes (M_2 \otimes N^\circ)$
	\item This is similar to part 3.
\end{enumerate}
\end{proof}

\end{corollary}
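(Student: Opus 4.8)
The plan is to compute directly from the coequaliser defining $D(M,N) = M \triangleleft N^\circ$. Unwinding that definition presents $D(M,N)$ as the quotient
\[
D(M,N) = \bigl(F(A,B)\otimes M \otimes N^\circ\bigr)/R,
\]
where $R$ is the $F(A,B)$-submodule generated by the elements
\[
\sum_{(\nu)} (a\triangleleft \nu_{(1)})\otimes m \otimes \nu_{(0)} \;-\; 1_{F(A,B)}\otimes (am)\otimes \nu,
\qquad a\in A,\ m\in M,\ \nu\in N^\circ,
\]
in which $a\triangleleft\nu_{(1)}$ denotes the image of $a\otimes\nu_{(1)}$ in $A\triangleleft B^\circ = F(A,B)$. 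With this presentation in hand, parts (2)--(4) are formal, while part (1) carries the content.

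For part (2) I would use $F(k,k)=k$ from the preceding proposition (here $A=B=k$), so that the ambient module is just $M\otimes N^\circ$; since the $k$-action is trivial every generator of $R$ collapses to $0$, whence $R=0$, and finite dimensionality of $N$ gives $N^\circ = N^*$. Parts (3) and (4) follow the same template: both the free module $F(A,B)\otimes(-)\otimes N^\circ$ (respectively $F(A,B)\otimes M\otimes(-)^\circ$) and the relation submodule $R$ split compatibly along the given direct-sum decomposition of $M$ (respectively $N$), so the quotient splits. Part (4) uses in addition that $(N_1\oplus N_2)^\circ = N_1^\circ\oplus N_2^\circ$, immediate from the definition of the dual comodule.

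The substance is part (1), with $M=A$ the left regular module, $B=A$, and $N^\circ = A^\circ$. First I would prove surjectivity of the canonical map $F(A,A)\otimes A^\circ \to D(A,A)$: specialising the generating relation to $m=1_A$ gives
\[
1\otimes a\otimes\alpha \;\equiv\; \sum_{(\alpha)} (a\triangleleft\alpha_{(1)})\otimes 1_A \otimes \alpha_{(0)}
\qquad\text{in } D(A,A),
\]
so every generator is an $F(A,A)$-multiple of an element of $1\otimes 1_A\otimes A^\circ$, and $D(A,A)$ is generated by the image of $A^\circ$. The real step is injectivity: I would show that, after identifying $F(A,A)\otimes A^\circ$ with $F(A,A)\otimes(1_A\otimes A^\circ)$, every generator of $R$ already vanishes there. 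Expanding such a generator and applying coassociativity $\triangle^2 = (\triangle\otimes 1)\circ\triangle$ to split $\alpha$, one factor at a time, reduces it to the defining relation
\[
(aa')\otimes h \;-\; \sum_{(h)} (a\otimes h_{(2)})(a'\otimes h_{(1)})
\]
of the Sweedler product $F(A,A)=A\triangleleft A^\circ$ (tensored with a residual $A^\circ$-factor), which is zero by construction.

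The hard part will be the bookkeeping in this reduction: aligning the Sweedler indices while commuting multiplication in $A$ past the comultiplication of $\alpha$, and recognising the outcome as a relation already imposed in $F(A,A)$. Once the generators of $R$ are seen to vanish on $F(A,A)\otimes A^\circ$, the surjection above is an isomorphism, establishing part (1); parts (2)--(4) then follow from the splitting observations.
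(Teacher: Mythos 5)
Your proposal is correct and takes essentially the same route as the paper: you present $D(M,N)$ as the quotient of $F(A,B)\otimes M\otimes N^\circ$ by the relation submodule, obtain generation by $A^\circ$ from the relation at $m=1_A$, and then use coassociativity $\triangle^2=(\triangle\otimes 1)\circ\triangle$ to show the generators of $R$ reduce to the defining relations of the Sweedler product $F(A,A)$ and hence vanish, with parts (2)--(4) handled by the same collapse and direct-sum splitting observations the paper uses. The only difference is presentational: you make the surjectivity/injectivity split and the explicit form of the generators of $R$ more systematic than the paper does.
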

\subsection{Coda: Internal symmetries?}

It is our hope that the material in this paper will excite the imagination of mathematicians working in fields beyond algebra and category theory.  One potential application that has motivated the research has been the possibility of finding a context for internal symmetries in physics: not a conjecture of what the "right" group should be (although the model would suggest various groups) but a reason why internal symmetries should be expected and a framework that would govern their operation.

The first observation is that for physical systems with a symmetry group $G$ acting, the representations of interest in quantum physics are complex representations in which $G$ is represented as unitary matrices. 

Let $A$ be the real algebra by two elements, $i$, complex multiplication, and $J$, complex conjugation. 

The theory of section 2 provides a canonical universal extension:
\[
\eta: A \to F(A,A)\otimes A
\]
The theory of section 4 provides, for \emph{any} representation $W$ of $A$ a canonical universal extension
\[
\tau: W \to D(W,W)\otimes W
\]
where $D(W,W)$ is an $F(A,A)$ module, so that $D(W,W) \otimes W$ is an $A$ module via the homomorphism $\eta$.

The theory thus provides a universal symmetry algebra, $F(A,A)$; its homomorphic images contain groups which could play the role of internal symmetry groups.  If $W$ is a (complex) module for the group $G$, $D(W,W)\otimes W $ is a module for $F(A,A)\otimes \mathbb{C}G$. If $Z$ is any module on which $F(A,A)$ acts, $F(A,A)\otimes \mathbb{C}G$ acts on $Z\otimes W$. If $H$ is then a group generated by a set of invertible elements in the image of $F(A,A) \in \End(Z)$, then $H\times G$ acts on $Z\otimes W$. In this way, groups $H$ would appear as inevitable extra symmetries given any complex representation of $G$.

Furthermore, $F(A,A)$ can be calculated explicitly, and its homomorphic images contain algebras and groups which have been used to model internal symmetries.  The procedure used to calculate $F(\mathbb{C},\mathbb{C})$ in section \ref{sec;basicexample} provides generators and relations for $F(A,A)$.

The canonical map $\eta: A \to F(A,A) \otimes A$ is a homomorphism thus writing $\eta(x) (resp\eta(J))$
\begin{align}
\eta(x) = & f_11 + f_xX  + f_JJ + f_{xJ}xJ \\
\eta(J) = & g_11 + g_xX  + g_JJ + g_{xJ}xJ 
\end{align}
we need 
\[
\eta(x)^2 = -1, \ \ \ \eta(J)^2 = 1, \ \ \ \{\eta(x),\eta(J)\} = 0.
\]
Direct computation of $\eta(x)^2$ gives
\begin{align}
-1 = & f_1^2 - f_x^2 + f_J^2 + f_{xJ}^2 \\
 0 = & \{f_1,f_x\} + [f_{xJ},f_J] = \{f_1,f_J\} + [f_{xJ},f_x] = \{f_1,f_{xJ\}}\} + [f_x,f_J]
\end{align}
and similarly
\begin{align}
1 = & g_1^2 - g_x^2 + g_J^2 + g_{xJ}^2 \\
 0 = & \{g_1,g_x\} + [g_{xJ},g_J] = \{g_1,g_J\} + [g_{xJ},g_x] = \{g_1,g_{xJ\}}\} + [g_x,g_J].
\end{align}
The identity $\{\eta(x), \eta(J)\}= 0$ gives
\begin{align}
0 = &\{f_1,g_1\} -\{f_x,g_x\} + \{f_j, g_J\} + \{f_{xJ}, g_{xJ}\} \\
=& \{f_1,g_x\} + \{g_1,f_x\} + [f_{xJ}, g_J] + [g_{xJ},f_J] \\
=& \{f_1,g_J\} + \{g_1,f_J\} + [f_{xJ}, g_x] + [g_{xJ},f_x] \\
=& \{f_1,g_{xJ}\} + \{g_1,f_{xJ}\} + [f_{x}, g_J] + [g_x,f_J].
\end{align}
Finite dimensional images of this algebra and $F(A,A)\otimes A$ contain many familiar algebras. For example, setting
\[
f_1^2 = f_x^2 = f_J^2 = f_{xJ} ^2 = 1/2, \ \ g_1=g_x = g_{xJ} = 0, \ \ g_j = 1
\]
and 
\[
\{f_1,f_x\} = \{f_1,f_j\} = \{f_1, f_{xJ}\} = [f_x,f_J] = [f_J,f_{xJ}] = [f_{xJ},f_x] =0
\]
contains a Clifford algebra with an inner product $\{1,-1,1,1\}$ generated by $\{f_1, f_xx, f_JJ,f_{xJ}xJ\}$.
Additionally requiring $f_J = f_{xJ}=0$ defines an algebra isomorphic to $F(\mathbb{C},\mathbb{C})$ whose complexification $F(\mathbb{C},\mathbb{C})\otimes \mathbb{C}$ has $M_2(\mathbb{C})$ as simple homomorphic images.

\newpage

\nocite{*}
\bibliography{boultonbatchelor} {}
\bibliographystyle{ieeetr}


\end{document}